\newtheorem{theorem}{Theorem}[section]
\newaliascnt{lemma}{theorem}
\newtheorem{lemma}[lemma]{Lemma}
\newaliascnt{proposition}{theorem}
\newtheorem{proposition}[proposition]{Proposition}
\newaliascnt{assumption}{theorem}
\newaliascnt{corollary}{theorem}
\newtheorem{corollary}[corollary]{Corollary}
\newaliascnt{definition}{theorem}
\newtheorem{definition}[definition]{Definition}
\newaliascnt{example}{theorem}
\newaliascnt{remark}{theorem}
\newtheorem{remark}[remark]{Remark}
\newaliascnt{hypothesis}{theorem}
\newtheorem{hypothesis}[hypothesis]{Hypothesis}
\newaliascnt{property}{theorem}
\let\originalleft\left
\let\originalright\right
\renewcommand{\left}{\mathopen{}\mathclose\bgroup\originalleft}
\renewcommand{\right}{\aftergroup\egroup\originalright}
\newcommand{\Tr}{\mathop{\mathrm{Tr}}}
\renewcommand{\d}{\/\mathrm{d}\/}
\def\e{\varepsilon}
\def\t{t\wedge\tau_N^n}
\def\s{t\wedge\tau_N}
\def\T{T\wedge\tau_N^n}
\def\L{\mathrm{L}}
\def\I{\mathrm{I}}
\def\F{\mathrm{F}}
\def\C{\mathrm{C}}
\def\f{\mathbf{f}}
\def\J{\mathrm{J}}
\def\B{\mathrm{B}}
\def\D{\mathrm{D}}
\def\E{\mathbb{E}}
\def\X{\mathrm{X}}
\def\v{\mathbf{v}}
\def\V{\mathbb{V}}
\def\W{\mathrm{W}}
\def\G{\mathcal{O}}
\def\V{\mathbb{V}}
\def\wi{\widetilde}
\def\U{\mathrm{U}}
\def\P{\mathbb{P}}
\def\u{\mathbf{u}}
\def\H{\mathrm{H}}
\renewcommand{\d}{\/\mathrm{d}\/}
\newcommand{\Addresses}{{
		\footnote{
			
			\noindent \textsuperscript{1}Department of Mathematics, Indian Institute of Technology Roorkee-IIT Roorkee,	Haridwar Highway, Roorkee, Uttarakhand 247667, INDIA.\par\nopagebreak	\noindent  \textit{e-mail:} \texttt{manilfma@iitr.ac.in, maniltmohan@gmail.com.}
			
		\noindent \textsuperscript{*}Corresponding author.

			\textit{Key words: Stochastic Burgers-Huxley equation, strong solution, large deviation, exit time estimate, invariant measure, ergodicity.} 
			
			Mathematics Subject Classification (2010): 65C30, 37L40, 76D03. 
			
		}}}
\begin{document}
	
	\title[Stochastic Burgers-Huxley Equations]{Stochastic Burgers-Huxley Equations: Global Solvability, Large Deviations and Ergodicity\Addresses}
	\author[M. T. Mohan ]{Manil T. Mohan\textsuperscript{1*}}

	\maketitle
	
	\begin{abstract}
In this work, we consider the stochastic Burgers-Huxley equation perturbed by multiplicative Gaussian noise, and discuss about the global solvability results and asymptotic behavior of solutions. We show the existence of a global strong solution of the stochastic Burgers-Huxley equation, by making use of a  local monotonicity property of the linear and nonlinear operators and  a stochastic generalization of  localized version of the Minty-Browder technique. We then discuss about the inviscid limit of the stochastic Burgers-Huxley equation to Burgers as well as Huxley equations. By considering the noise to be additive Gaussian, Exponential estimates for exit from a ball of radius $R$ by time $T$ for solutions of the stochastic Burgers-Huxley equation is derived, and then studied in the context of Freidlin-Wentzell type large deviations principle. Finally, we establish the existence of a unique ergodic and strongly mixing invariant measure for the stochastic Burgers-Huxley equation with additive Gaussian noise, using the exponential stability of solutions.
	\end{abstract}

	\section{Introduction}\label{sec1}\setcounter{equation}{0}  We consider the \emph{ stochastic Burgers-Huxley equation} for $(x,t)\in\mathcal{O}\times(0,T)=(0,1)\times(0,T)$ with a random force  as (see \cite{JS})
	\begin{align}\label{1.1}
du(t)=\left(\nu\frac{\partial^2u(t)}{\partial x^2}-\alpha u(t)\frac{\partial u(t)}{\partial x}+\beta u(t)(1-u(t))(u(t)-\gamma)\right)dt+\sigma(t,u(t))dW(t),
	\end{align}
	where $\alpha>0$ is the advection coefficient, $\nu,\beta>0$ and $\gamma\in(0,1)$ are parameters. In \eqref{1.1}, $W(\cdot)$ is an $\L^2(\mathcal{O})$-valued $Q$-Wiener process. 
	 We supplement \eqref{1.1} with Dirichlet boundary conditions 
	\begin{align}\label{1.5}
	u(0,t)=u(1,t)=0,
	\end{align}
	and the initial condition
	\begin{align}\label{1.6}
	u(x,0)=u_0(x), \ x\in\overline{\mathcal{O}}.
	\end{align}
	 Equation \eqref{1.1} describes a prototype model for describing the interaction between reaction mechanisms, convection effects and diffusion transports.  	Our goal in this work is to study the global solvability results as well as asymptotic analysis of solutions to the problem \eqref{1.1} with boundary and initial conditions \eqref{1.5} and \eqref{1.6}. We use a stochastic generalization of localized version of the Minty-Browder technique to obtain the global strong solution. A local monotonicity property of the linear and nonlinear operators is exploited in the proofs. The inviscid limit of the stochastic Burgers-Huxley equation to Burgers and Huxley equations is also discussed. For additive Gaussian noise case, using energy estimates and Doob's martingale inequality, exponential estimates for exit time for solutions of the stochastic Burgers-Huxley equation is obtained. We also established a similar estimate for exit time by using Freidlin-Wentzell type large deviations principle. The existence of a unique ergodic and strongly mixing invariant measure for the stochastic Burgers-Huxley equation is established by  making use of the exponential stability of solutions.

	 For $\alpha=0$, the equation \eqref{1.1} takes the form 
		\begin{align}\label{2}
	du(t)=\left(\nu\frac{\partial^2u(t)}{\partial x^2}+\beta u(t)(1-u(t))(u(t)-\gamma)\right)dt+\sigma(t,u(t)) dW(t),
	\end{align}
which is known as the \emph{stochastic Huxley equation} and it describes nerve pulse propagation in nerve fibers and wall motion in liquid crystals (\cite{XYW1}). For $\beta=0$ and $\alpha=1$, the equation \eqref{1.1} can be reduced to
	\begin{align}\label{3}
du(t)=\left(\nu\frac{\partial^2u(t)}{\partial x^2}- u(t)\frac{\partial u(t)}{\partial x}\right)dt+\sigma(t,u(t)) dW(t),
\end{align}
which is the well-known \emph{stochastic viscous Burgers equation}.  In \cite{JMB}, Burgers studied the deterministic model for modeling the turbulence phenomena (see \cite{HB,JMB1} also). The authors in \cite{GDP} proved the existence and uniqueness of a global mild solution as well as the existence of an invariant measure of the stochastic Burgers' equation perturbed by cylindrical Gaussian noise. The authors in \cite{BCJ94} studied the  Burgers Equation perturbed by a white noise in space and time, and proved the existence of solutions by showing that the Cole-Hopf transformation is meaningful also in the stochastic case. The global existence and uniqueness of the strong, weak and mild solutions for one-dimensional Burgers equation perturbed by L\'evy noise is established in \cite{ZDTG}. The Burgers equation perturbed by a multiplicative white noise is considered in \cite{GDP2} and proved the existence and uniqueness of the global solution as well as the strong Feller property and irreducibility for the corresponding transition semigroup (see also Chapter 14, \cite{GDJZ}). Moreover, the existence and uniqueness of an invariant measure is also established. Exponential ergodicity for stochastic Burgers equations is established in \cite{BGBM}. Control problems and dynamic programming of the stochastic Burgers equation have been carried out in \cite{GDP1,GDP2,HCR}, etc.  The stochastic generalized Burgers-Huxley equation perturbed by cylindrical Gaussian noise is considered in \cite{MTM2} and proved  the existence of a unique global mild solution using a fixed point method and stopping time arguments. The works \cite{AAIG,DBAJ,MHJV,ATHZ}, etc considered numerical analysis of stochastic Burgers equations. Different mathematical problems regarding stochastic Burgers equation is available in the literature and interested readers are referred to see \cite{RBLB,ZBLD,LBGG,PGMJ,FYW,ATHZ1}, etc and references therein.

The rest of the paper is organized as follows. In the next section, we provide the abstract formulation of the problem and provide the necessary function spaces needed to obtain the global solvability results of the system \eqref{1.1}-\eqref{1.6}. A local monotonicity property as well as hemicontinuity of the linear and nonlinear operators in $\L^{\infty}$  ball is proved in the same section (Theorem \ref{monotone} and Lemma \ref{lem2.5}). The existence and uniqueness of global strong solution is established in section \ref{sec3} using a stochastic generalization of localized version of the Minty-Browder technique (Theorem \ref{exis}).  The inviscid limit of the stochastic Burgers-Huxley equation to the stochastic Burgers equation (as $\beta\to 0$, Proposition \ref{prop5.1}) as well as to  the stochastic Huxley equation (as $\alpha\to 0$, Proposition \ref{prop4.2}) is discussed in section \ref{sec5}. The system \eqref{1.1}-\eqref{1.6} perturbed by additive Gaussian noise is considered in sections \eqref{sec6} and \eqref{sec9}. Using energy estimates and Doob's martingale inequality, the exponential estimates for exit from a ball of radius $R$ by time $T$ for strong solutions of the stochastic Burgers-Huxley equation is derived in section \ref{sec6} (Remark \ref{rem5.10}). We studied the exit time estimates in the context of Freidlin-Wentzell type large deviations principle also in the same section (Theorem \ref{thm4.8}). In section \ref{sec9}, we prove the exponential moment energy estimates for the strong solution to the system \eqref{1.1}-\eqref{1.6} and also  the exponential stability of solutions (Theorems \ref{expe} and \ref{exps}).  Finally, we establish the existence of a unique ergodic and strongly mixing invariant measure for the stochastic Burgers-Huxley equation, using the exponential stability of solutions (Theorems \ref{EIM} and \ref{UEIM}).

\section{Mathematical formulation}\label{sec2}\setcounter{equation}{0}
In this section, we present the necessary function spaces, properties of linear and nonlinear operators used to obtain the global solvability results of the system \eqref{1.1}-\eqref{1.6}. We show that the sum of linear and nonlinear operator is locally monotone and  hemicontinuous. 
\subsection{Functional setting}
Let $\C_0^{\infty}(\mathcal{O})$ denotes the space of all infinitely differentiable functions with compact support on $\mathcal{O}$. For $p\in[2,\infty)$, the Lebesgue spaces are denoted by $\L^p(\mathcal{O})$ and the Sobolev spaces are denoted by $\H^{k}(\mathcal{O})$. The norm in $\L^2(\mathcal{O})$ is denoted by $\|\cdot\|_{\L^2}$ and  the inner product in $\L^2(\mathcal{O})$ is denoted by $(\cdot,\cdot)$.  Let $\H_0^1(\mathcal{O})$ denotes closure of $\C_0^{\infty}(\mathcal{O})$ in $\|\partial_x\cdot\|_{\L^2}$ norm. As $\mathcal{O}$ is a bounded domain, note that $\|\partial_x\cdot\|_{\L^2}$  defines a norm on $\H^1_0(\mathcal{O})$ and we have the continuous embedding $\H_0^1(\mathcal{O})\subset\L^2(\mathcal{O})\subset\H^{-1}(\mathcal{O})$, where $\H^{-1}(\mathcal{O})$ is the dual space of $\H_0^1(\mathcal{O})$. Remember that the embedding of $\H_0^1(\mathcal{O})\subset\L^2(\mathcal{O})$ is compact. The duality paring between $\H_0^1(\mathcal{O})$ and $\H^{-1}(\mathcal{O})$ is denoted by $\langle\cdot,\cdot\rangle$. In one dimensions, we have the following continuous embedding: $\H_0^1(\mathcal{O})\subset\L^{\infty}(\mathcal{O})\subset\L^p(\mathcal{O}),$ for all $p\in[1,\infty)$ (see \cite{HM}).
\subsection{Linear operator}
Let $A$ denotes the self-adjoint and unbounded operator on $\L^2(\mathcal{O})$ defined by\footnote{Strictly speaking one has to define $A:=-\frac{d^2}{dx^2}$.}
\begin{align*}
A u:=-\frac{\partial^2u}{\partial x^2},
\end{align*}
with domain $D(A)= \H^2(\mathcal{O})\cap\H_0^1(\mathcal{O})=\{u\in\H^2(\mathcal{O}):u(0)=u(1)=0\}.$ The eigenvalues and the corresponding eigenfunctions of $A$ are given by
\begin{align*}
\lambda_k=k^2 \pi^2, \text{and } \  w_k(x)=\sqrt{\frac{2}{\pi}}\sin (k\pi x), \ k=1,2\ldots.
\end{align*}
Since $\mathcal{O}$ is a bounded domain, $A^{-1}$ exists and is a compact operator on $\L^2(\mathcal{O})$. Moreover, one can define the fractional powers of $A$ and 
$$\|A^{1/2}u\|_{\L^2}^2=\sum_{j=1}^{\infty}\lambda_j\langle u,w_j\rangle|^2\geq \lambda_1\sum_{j=1}^{\infty}\langle u,w_j\rangle|^2=\lambda_1\|u\|_{\L^2}^2=\pi^2\|u\|_{\L^2}^2,$$ which is the Poincar\'e inequality. Note also that $\|u\|_{\H^{s}}=\|A^{s/2}u\|_{\L^2},$ for all $s\in\mathbb{R}$. An integration by parts yields $$(Au,v)=(\partial_xu,\partial_xv)=:a(u,v), \ \text{ for all } \ v\in\H_0^1(\mathcal{O}),$$ so that $A:\H_0^1(\mathcal{O})\to\H^{-1}(\mathcal{O})$. 
\subsection{Nonlinear operators}
Let us now define $b:\H_0^1(\mathcal{O})\times\H_0^1(\mathcal{O})\times \H_0^1(\mathcal{O})\to\mathbb{R}$ as $$b(u,v,w)=\int_0^1u(x)\frac{\partial v(x)}{\partial x}w(x)dx.$$ 
Using an integration by parts and boundary conditions, it can be easily seen that 
\begin{align}\label{6}
b(u,u,u)=(u\partial_xu,u)=\int_0^1(u(x))\frac{\partial u(x)}{\partial x}u(x)d x=\frac{1}{3}\int_0^1\frac{\partial}{\partial x}(u(x))^{3}d x=0.
\end{align}
For $w\in\L^2(\mathcal{O})$, we can define an operator $B(\cdot,\cdot):\H_0^1(\mathcal{O})\times\H_0^1(\mathcal{O})\to\L^2(\mathcal{O})$ by 
\begin{align*}
(B(u,v),w)=b(u,v,w)\leq\|u\|_{\L^{\infty}}\|\partial_xv\|_{\L^2}\|w\|_{\L^{2}}\leq\|u\|_{\H_0^1}\|v\|_{\H_0^1}\|w\|_{\L^2},
\end{align*}
so that $\|B(u,v)\|_{\L^2}\leq \|u\|_{\H_0^1}\|v\|_{\H_0^1}$. We denote $B(u)=B(u,u)$, so that one can easily obtain $\|B(u)\|_{\L^2}\leq \|u\|_{\H_0^1}^{2}$. Moreover, for all $v\in\H_0^1(\mathcal{O})$, we have 
\begin{align*}
|\langle B(u),v\rangle| =|\langle u\partial_xu,v\rangle| =\left|-\frac{1}{2}\langle u^2,\partial_xv\rangle\right| \leq \frac{1}{2}\|u\|_{\L^4}^2\|\partial_xv\|_{\L^2},
\end{align*}
so that $B(\cdot):\L^4(\mathcal{O})\to\H^{-1}(\mathcal{O})$ and 
\begin{align}\label{2p5}
\|B(u)\|_{\H^{-1}}\leq \frac{1}{2}\|u\|_{\L^4}^2. 
\end{align}
Using H\"older's inequality, we have 
\begin{align}\label{2.1}
\|B(u)-B(v)\|_{\L^2}&=\|u\partial_xu-v\partial_xv\|_{\L^2}\leq\|(u-v)\partial_xu\|_{\L^2}+\|v\partial_x(u-v)\|_{\L^2}\nonumber\\&\leq\|u-v\|_{\L^{\infty}}\|u\|_{\H_0^1}+\|v\|_{\L^{\infty}}\|u-v\|_{\H_0^1}\nonumber\\&\leq C(\|u\|_{\H_0^1}+\|v\|_{\H_0^1})\|u-v\|_{\H_0^1},
\end{align}
and hence the operator $B:\H_0^1(\mathcal{O})\to\L^2(\mathcal{O})$ is locally Lipschitz. Moreover, we have 
\begin{align}
\langle B(u)-B(v),w\rangle=\frac{1}{2}(\partial_xu^2-\partial_xv^2,w)=-\frac{1}{2}((u-v)(u+v),\partial_xw).
\end{align}
Using H\"older's inequality, we get 
\begin{align}
|\langle B(u)-B(v),w\rangle|\leq\frac{1}{2}(\|u\|_{\L^4}+\|v\|_{\L^4})\|u-v\|_{\L^4}\|w\|_{\H_0^1},
\end{align}
and hence $\|B(u)-B(v)\|_{\H^{-1}}\leq \frac{1}{2}(\|u\|_{\L^4}+\|v\|_{\L^4})\|u-v\|_{\L^4}$, so that the operator $B:\L^4(\mathcal{O})\to\H^{-1}(\mathcal{O})$ is locally Lipschitz.

Let us define $c(u)=u(1-u)(u-\gamma)$. Using H\"older's and Young's inequalities, we have  
\begin{align}\label{7}
(c(u),u)&=(u(1-u)(u-\gamma),u)=((1+\gamma)u^{2}-\gamma u-u^{3},u)\nonumber\\&=(1+\gamma)(u^{2},u)-\gamma\|u\|_{\L^2}^2-\|u\|_{\L^{4}}^{4}\nonumber\\&\leq(1+\gamma)\|u\|_{\L^4}^2\|u\|_{\L^2}-\gamma\|u\|_{\L^2}^2-\|u\|_{\L^{4}}^{4}\nonumber\\&\leq -\frac{1}{2}\|u\|_{\L^4}^4+\frac{(1+\gamma^2)}{2}\|u\|_{\L^2}^2,
\end{align}
for all $u\in\L^{4}(\mathcal{O})$. Using  H\"older's inequality, for $u,v\in\H_0^1(\mathcal{O})$, we get 
\begin{align}\label{2p7}
\|c(u)-c(v)\|_{\L^2}&=\|(1+\gamma)(u^{2}-v^{2})-\gamma(u-v)-(u^{3}-v^{3})\|_{\L^2}\nonumber\\&\leq (1+\gamma)\|u+v\|_{\L^{\infty}}\|w\|_{\L^2}+\gamma\|w\|_{\L^2}+\|u^2+uv+v^2\|_{\L^{\infty}}\|w\|_{\L^2}\nonumber\\&\leq\left[(1+\gamma)(\|u\|_{\L^{\infty}}+\|v\|_{\L^{\infty}})+\gamma+\frac{1}{2}(\|u\|_{\L^{\infty}}^2+\|v\|_{\L^{\infty}}^2)\right]\|w\|_{\L^2},
\end{align}
and the operator $c(\cdot):\H_0^1(\mathcal{O})\to\L^2(\mathcal{O})$ is locally Lipschitz. For more details see \cite{MTM1}. 

\subsection{Local monotonicity} We show that the operator $F(\cdot)=\nu A+\alpha B(\cdot)-\beta c(\cdot)$ is locally monotone. 
\begin{definition}
	Let $\X$ be a Banach space and let $\X^{'}$ be its topological dual.
	An operator $\mathrm{F}:\mathrm{D}\rightarrow
	\X^{'},$ $\mathrm{D}=\mathrm{D}(\mathrm{F})\subset \X$ is said to be
	\emph{monotone} if
	$\langle\mathrm{F}(x)-\mathrm{F}(y),x-y\rangle\geq
	0,$ for all $x,y\in \mathrm{D}$. 
	The operator $\F(\cdot)$ is said to be \emph{hemicontinuous} if, for all $x, y\in\X$ and $w\in\X'$ $$\lim_{\lambda\to 0}\langle\F(x+\lambda y),w\rangle=\langle\F(x),w\rangle.$$
	The operator $\F(\cdot)$ is called \emph{demicontinuous} if for all $x\in\mathrm{D}$ and $y\in\X$, the functional $x \mapsto\langle \mathrm{F}(x), y\rangle$  is continuous, or in other words, $x_k\to x$ in $\X$ implies $\mathrm{F}(x_k)\xrightarrow{w}\mathrm{F}(x)$ in $\X'$. Clearly demicontinuity implies hemicontinuity. 
\end{definition}
\begin{theorem}\label{monotone}
	For a given $r > 0$, we consider the following (closed) $\L^{\infty}$-ball $\B_r$ in the space $\H_0^1(\mathcal{O})$: $$\B_r := \big\{v \in \H_0^1(\mathcal{O}):\|v\|_{\L^{\infty}}\leq r\big\},$$ then, for any $u\in\H_0^1(\mathcal{O})$ and $v\in\B_r$, we have
	\begin{align}\label{3.7}
&\langle F(u)-F(v),u-v\rangle +\left\{\frac{\alpha^2}{2\nu}r^2+\beta(1+\gamma+\gamma^2)\right\}\|u-v\|_{\L^2}^2\geq \frac{\nu}{2}\|\partial_x(u-v)\|_{\L^2}^2\geq 0.
	\end{align}
\end{theorem}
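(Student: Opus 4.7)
The plan is to set $w := u-v \in \H_0^1(\mathcal{O})$ and decompose the left-hand side as
\begin{equation*}
\langle F(u)-F(v),w\rangle \;=\; \nu\,\langle Au-Av,w\rangle \;+\; \alpha\,\langle B(u)-B(v),w\rangle \;-\; \beta\,(c(u)-c(v),w),
\end{equation*}
estimating the three pieces separately. The linear term immediately produces $\nu\|\partial_x w\|_{\L^2}^2$ by the identity $(Au,w)=(\partial_x u,\partial_x w)$ recorded earlier. The role of the $\L^{\infty}$-localization $v\in\B_r$ will be to tame only the quadratic Burgers term; the cubic reaction term will be controlled by an estimate \emph{independent} of $u$ and $v$.

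For the Burgers piece, I would insert $u=v+w$ to obtain $B(u)-B(v)=v\partial_x w+w\partial_x v+w\partial_x w$. Pairing with $w$, the third summand vanishes by the cancellation \eqref{6}, while the remaining two combine after integration by parts (using $w\in\H_0^1(\mathcal{O})$) to
\begin{equation*}
\langle B(u)-B(v),w\rangle \;=\; -\,(v,\,w\,\partial_x w).
\end{equation*}
Hölder's inequality and $v\in\B_r$ then give $|\langle B(u)-B(v),w\rangle|\le r\|w\|_{\L^2}\|\partial_x w\|_{\L^2}$, and Young's inequality weighted by $\nu$ yields
\begin{equation*}
\alpha\,|\langle B(u)-B(v),w\rangle| \;\le\; \tfrac{\nu}{2}\|\partial_x w\|_{\L^2}^2 \;+\; \tfrac{\alpha^2 r^2}{2\nu}\|w\|_{\L^2}^2,
\end{equation*}
which absorbs exactly half of the linear dissipation.

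For the cubic piece, I would expand $c(s)=-s^3+(1+\gamma)s^2-\gamma s$ and use the factorizations $u^3-v^3=w(u^2+uv+v^2)$ and $u^2-v^2=w(u+v)$ to rewrite
\begin{equation*}
(c(u)-c(v),w) \;=\; -\!\int_0^1\!w^2(u^2+uv+v^2)\,\d x \;+\; (1+\gamma)\!\int_0^1\!w^2(u+v)\,\d x \;-\; \gamma\|w\|_{\L^2}^2.
\end{equation*}
The first integrand is nonnegative and will be used to absorb the middle one pointwise: splitting the factor $(1+\gamma)(u+v)$ and applying Young's inequality to each summand separately gives $(1+\gamma)|u+v|\le \tfrac12(u^2+v^2)+(1+\gamma)^2$, which together with the elementary inequality $u^2+uv+v^2\ge\tfrac12(u^2+v^2)$ produces
\begin{equation*}
(1+\gamma)\!\int_0^1\!w^2(u+v)\,\d x \;\le\; \int_0^1\!w^2(u^2+uv+v^2)\,\d x \;+\; (1+\gamma)^2\|w\|_{\L^2}^2.
\end{equation*}
Hence $(c(u)-c(v),w)\le[(1+\gamma)^2-\gamma]\|w\|_{\L^2}^2=(1+\gamma+\gamma^2)\|w\|_{\L^2}^2$, with no dependence on $\|u\|_{\L^{\infty}}$ or $\|v\|_{\L^{\infty}}$.

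Summing the three bounds and rearranging yields
\begin{equation*}
\langle F(u)-F(v),w\rangle + \Big[\tfrac{\alpha^2 r^2}{2\nu}+\beta(1+\gamma+\gamma^2)\Big]\|w\|_{\L^2}^2 \;\ge\; \tfrac{\nu}{2}\|\partial_x w\|_{\L^2}^2 \;\ge\; 0,
\end{equation*}
which is precisely \eqref{3.7}. The main subtlety is not the linear or Burgers piece, which follow a standard Navier--Stokes-type scheme, but the cubic estimate: a naïve mean-value-type bound on $c(u)-c(v)$ would force $\|u\|_{\L^{\infty}}$ into the constant and would thereby destroy the \emph{local} (in $v$ only) character of the monotonicity. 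Exploiting the sign-definite term $-\int_0^1 w^2(u^2+uv+v^2)\,\d x$ as an absorbing device, together with the algebraic identity $(1+\gamma)^2-\gamma=1+\gamma+\gamma^2$, is what pins down the precise constant appearing in the statement.
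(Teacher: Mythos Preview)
Your proof is correct and follows essentially the same route as the paper: the Burgers piece is reduced to $-(v,w\partial_x w)$ and handled by H\"older--Young exactly as in \eqref{2.10}, while for the cubic piece both you and the paper exploit the sign-definite term $-\int w^2(u^2+uv+v^2)$ to absorb the $(1+\gamma)\int w^2(u+v)$ contribution and arrive at the constant $(1+\gamma)^2-\gamma=1+\gamma+\gamma^2$. The only cosmetic difference is that you carry out the absorption pointwise via $(1+\gamma)|u+v|\le\tfrac12(u^2+v^2)+(1+\gamma)^2$ and $u^2+uv+v^2\ge\tfrac12(u^2+v^2)$, whereas the paper's \eqref{2.11} splits $u^2+uv+v^2$ into $\|uw\|_{\L^2}^2+\|vw\|_{\L^2}^2+(uvw,w)$ and applies Young's inequality at the level of these norms; the two are algebraically equivalent.
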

\begin{proof}
	Let us first consider $\langle F(u)-F(v),u-v\rangle$ and simplify it as 
\begin{align}\label{2.9}
\langle F(u)-F(v),u-v\rangle &=\langle\nu Au+\alpha B(u)+\beta c(u)-(\nu Av+\alpha B(v)-\beta c(v)),u-v\rangle \nonumber\\&=\nu\|\partial_x(u-v)\|_{\L^2}^2+\alpha(B(u)-B(v),u-v)-\beta(c(u)-c(v),u-v).
\end{align}
Using an integration by parts, \eqref{6}, H\"older's  and Young's inequalities, we estimate $|\alpha(B(u)-B(v),u-v)|$ as 
\begin{align}\label{2.10}
|\alpha(B(u)-B(v),u-v)|&=\alpha|(u\partial_xu-v\partial_xv,u-v)|\nonumber\\&=\alpha|((u-v)\partial_xu,u-v)+(v\partial_x(u-v),u-v)|\nonumber\\&=\alpha\left|((u-v)\partial_xv,(u-v))+\frac{1}{2}(v,\partial_x(u-v)^2)\right|\nonumber\\&=\alpha\left|-\frac{1}{2}(v,\partial_x(u-v)^2)\right|=\alpha|-(v,(u-v)\partial_x(u-v))|\nonumber\\&\leq\alpha\|v\|_{\L^{\infty}}\|v-v\|_{\L^2}\|\partial_x(u-v)\|_{\L^2}\nonumber\\&\leq\frac{\nu}{2}\|\partial_x(u-v)\|_{\L^2}^2+\frac{\alpha^2}{2\nu}\|v\|_{\L^{\infty}}^2\|v-v\|_{\L^2}^2. 
\end{align}
Let us now estimate $\beta(c(u)-c(v),u-v)$ as 
\begin{align}\label{2.11}
&\beta(c(u)-c(v),u-v)\nonumber\\&=\beta((1+\gamma)(u^2-v^2)-\gamma(u-v)-(u^3-v^3),u-v)\nonumber\\&=\beta(1+\gamma)((u+v)(u-v),u-v)-\beta\gamma\|u-v\|_{\L^2}^2-\beta((u^2+uv+v^2)(u-v),u-v)\nonumber\\&=\beta(1+\gamma)((u+v)(u-v),u-v)-\beta\gamma\|u-v\|_{\L^2}^2-\beta\|u(u-v)\|_{\L^2}^2-\beta\|v(u-v)\|_{\L^2}^2\nonumber\\&\quad-\beta(uv(u-v),u-v)\nonumber\\&\leq\beta(1+\gamma)\left(\|u(u-v)\|_{\L^2}+\|v(u-v)\|_{\L^2}\right)\|u-v\|_{\L^2}\nonumber\\&\quad-\beta\gamma\|u-v\|_{\L^2}^2-\beta\|u(u-v)\|_{\L^2}^2-\beta\|v(u-v)\|_{\L^2}^2+\frac{\beta}{2}((u^2+v^2)(u-v),u-v)\nonumber\\&\leq {\beta(1+\gamma+\gamma^2)}\|u-v\|_{\L^2}^2,
\end{align}
where we used H\"older's and Young's inequalities. 
Combining \eqref{2.10}-\eqref{2.11} and then substituting it in \eqref{2.9}, we find 
\begin{align}\label{2p13}
&\langle F(u)-F(v),u-v\rangle+\left\{\frac{\alpha^2}{2\nu}\|v\|_{\L^{\infty}}^2+\beta(1+\gamma+\gamma^2)\right\}\|u-v\|_{\L^2}^2 \geq \frac{\nu}{2}\|\partial_x(u-v)\|_{\L^2}^2\geq 0.
\end{align}
 Using the fact that $v\in\B_r$, we get the required result \eqref{3.7}. 
\end{proof}

\begin{corollary}\label{mon1}
	For any $u,v\in\mathrm{L}^2(0, T ; \H_0^1(\mathcal{O}))$ and continuous function $\rho(t)$ on  $t\in(0,T)$, we have
	\begin{align}\label{3.11y}
	&\int_0^Te^{-\rho(t)}\langle F(u(t))-F(v(t)),u(t)-v(t) \rangle d t\nonumber\\&\quad +\int_0^Te^{-\rho(t)}\left\{\frac{\alpha^2}{2\nu}\|v(t)\|_{\L^{\infty}}^2+\beta(1+\gamma+\gamma^2)+\frac{L}{2}\right\}\|u(t)-v(t)\|_{\L^2}^2d t\nonumber\\&\geq \frac{1}{2}\int_0^Te^{-\rho(t)}\|\sigma(t, u) - \sigma(t,
	v)\|^2_{\mathcal{L}_{Q}}d t,
	\end{align}
	where we used Hypothesis \ref{hyp} (H.3). 
\end{corollary}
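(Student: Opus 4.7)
The plan is to reduce the statement to a pointwise-in-$t$ inequality and then integrate against the nonnegative weight $e^{-\rho(t)}$. The key observation is that inside the proof of Theorem \ref{monotone} we obtained the stronger intermediate inequality \eqref{2p13}, namely
\begin{align*}
\langle F(u)-F(v),u-v\rangle+\left\{\frac{\alpha^2}{2\nu}\|v\|_{\L^{\infty}}^2+\beta(1+\gamma+\gamma^2)\right\}\|u-v\|_{\L^2}^2\geq \frac{\nu}{2}\|\partial_x(u-v)\|_{\L^2}^2\geq 0,
\end{align*}
which holds for every $u\in\H_0^1(\mathcal{O})$ and every $v\in\H_0^1(\mathcal{O})$ (the restriction $v\in\B_r$ was used only to replace $\|v\|_{\L^{\infty}}$ by the uniform bound $r$). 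This is the right form to use here, since the $\L^{\infty}$-norm of $v(t)$ appears explicitly on the left-hand side of the corollary.

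I would apply this pointwise at (a.e.) $t\in(0,T)$ to $u(t),v(t)\in\H_0^1(\mathcal{O})$; the embedding $\H_0^1(\mathcal{O})\hookrightarrow\L^{\infty}(\mathcal{O})$ in one space dimension guarantees that $\|v(t)\|_{\L^{\infty}}$ is finite a.e.\ and that the resulting integrand is measurable. Next, by Hypothesis~\ref{hyp}(H.3), $\sigma$ satisfies a Lipschitz-type bound of the form $\|\sigma(t,u)-\sigma(t,v)\|_{\mathcal{L}_Q}^2\leq L\|u-v\|_{\L^2}^2$, so that
\begin{align*}
\frac{L}{2}\|u(t)-v(t)\|_{\L^2}^2\geq \frac{1}{2}\|\sigma(t,u)-\sigma(t,v)\|_{\mathcal{L}_Q}^2.
\end{align*}
Adding $\frac{L}{2}\|u(t)-v(t)\|_{\L^2}^2$ to the left of the pointwise monotonicity inequality and the smaller quantity $\frac{1}{2}\|\sigma(t,u)-\sigma(t,v)\|_{\mathcal{L}_Q}^2$ to the right preserves the inequality.

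Finally I would multiply the resulting pointwise inequality by $e^{-\rho(t)}\geq 0$ and integrate over $(0,T)$; the continuity of $\rho$ on $(0,T)$ ensures measurability and gives \eqref{3.11y} directly. There is no real obstacle: the nontrivial monotonicity estimate has already been established in Theorem \ref{monotone}, the Lipschitz bound on $\sigma$ is assumed, and the exponential weight plays no role beyond preserving the sign. The only thing worth checking is integrability, but since $u,v\in\L^2(0,T;\H_0^1(\mathcal{O}))$ and $\rho$ is continuous, all terms appearing in \eqref{3.11y} are well-defined as elements of $\L^1(0,T)$ after the standard embedding arguments.
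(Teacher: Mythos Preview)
Your proposal is correct and matches the paper's approach exactly: the paper does not give a separate proof of this corollary, merely indicating that it follows from Theorem~\ref{monotone} together with Hypothesis~\ref{hyp}(H.3), which is precisely the route you take via the pointwise estimate \eqref{2p13}, the Lipschitz bound on $\sigma$, and integration against the nonnegative weight $e^{-\rho(t)}$.
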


\begin{lemma}\label{lem2.5}
	The operator $F(\cdot)=\nu A+\alpha B(\cdot)-\beta c(\cdot)$ is demicontinuous. 
\end{lemma}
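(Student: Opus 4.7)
The plan is to show the stronger property that $F$ is (strongly) continuous from $\H_0^1(\mathcal{O})$ into $\H^{-1}(\mathcal{O})$, which immediately yields demicontinuity. Since $F$ is the linear combination $\nu A + \alpha B - \beta c$, it suffices to treat each summand separately and add. The whole argument is an assembly of estimates already established in the preceding subsections, so no new ideas are needed.

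Let $\{u_n\} \subset \H_0^1(\mathcal{O})$ with $u_n \to u$ in $\H_0^1(\mathcal{O})$; in particular the sequence is norm-bounded and $\|u_n - u\|_{\L^\infty} \to 0$ by the continuous embedding $\H_0^1(\mathcal{O}) \hookrightarrow \L^{\infty}(\mathcal{O})$. For the Laplacian, $A:\H_0^1(\mathcal{O}) \to \H^{-1}(\mathcal{O})$ is bounded and linear, as $\langle A u, \varphi\rangle = (\partial_x u, \partial_x \varphi)$, so $A u_n \to A u$ in $\H^{-1}(\mathcal{O})$.

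For the Burgers nonlinearity, apply the local Lipschitz estimate \eqref{2.1}:
\begin{equation*}
\|B(u_n) - B(u)\|_{\L^2} \leq C\bigl(\|u_n\|_{\H_0^1} + \|u\|_{\H_0^1}\bigr)\|u_n - u\|_{\H_0^1} \longrightarrow 0,
\end{equation*}
and compose with the continuous embedding $\L^2(\mathcal{O}) \hookrightarrow \H^{-1}(\mathcal{O})$. For the cubic reaction term, use \eqref{2p7} together with the $\L^{\infty}$-boundedness of $\{u_n\}$:
\begin{equation*}
\|c(u_n) - c(u)\|_{\L^2} \leq \Bigl[(1+\gamma)(\|u_n\|_{\L^{\infty}} + \|u\|_{\L^{\infty}}) + \gamma + \tfrac{1}{2}(\|u_n\|_{\L^{\infty}}^2 + \|u\|_{\L^{\infty}}^2)\Bigr]\|u_n - u\|_{\L^2} \longrightarrow 0,
\end{equation*}
and again embed into $\H^{-1}(\mathcal{O})$.

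Adding the three convergences gives $F(u_n) \to F(u)$ strongly in $\H^{-1}(\mathcal{O})$, which implies $\langle F(u_n), \varphi\rangle \to \langle F(u), \varphi\rangle$ for every $\varphi \in \H_0^1(\mathcal{O})$; this is exactly demicontinuity in the sense of the definition recorded above. There is no genuine obstacle: the only mild care needed is to invoke the Sobolev embedding $\H_0^1 \hookrightarrow \L^{\infty}$ so that the local Lipschitz bounds on $B$ and $c$ can be applied uniformly along the convergent sequence.
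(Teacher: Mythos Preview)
Your proof is correct and follows essentially the same approach as the paper: both verify demicontinuity by showing that each of $A$, $B$, and $c$ sends $\H_0^1$-convergent sequences to convergent sequences in $\H^{-1}$, using the local Lipschitz estimates and the embedding $\H_0^1\hookrightarrow\L^\infty$. The only difference is organizational---you cite the already-established bounds \eqref{2.1} and \eqref{2p7} directly, while the paper re-derives analogous inequalities inline within the duality pairing $\langle F(u^n)-F(u),v\rangle$.
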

\begin{proof}
	Let us take a sequence $u^n\to u$ in $\H_0^1(\mathcal{O})$. For $v\in\H_0^1(\mathcal{O})$, we consider 
	\begin{align}\label{214}
	\langle F(u^n)-F(u),v\rangle &=\nu\langle A(u^n)-A(u),v\rangle+\alpha(B(u^n)-B(u),v)-\beta(c(u^n)-c(u),v)\nonumber\\&=\nu(\partial_x(u^n-u),\partial_xv)+\alpha((u^n-u)(u^n+u),\partial_xv)\nonumber\\&\quad-\beta((u^n-u)\left[(u^n+u)(1+\gamma)-(\gamma+(u^n)^2+u^nu+u^2)\right],v)\nonumber\\&\leq\nu\|\partial_x(u^n-u)\|_{\L^2}\|\partial_xv\|_{\L^2}+\alpha\|u^n-u\|_{\L^4}\|u^n+u\|_{\L^4}\|\partial_xv\|_{\L^2}\nonumber\\&\quad+\beta(1+\gamma)\|u^n-u\|_{\L^2}\|u^n+u\|_{\L^2}\|v\|_{\L^{\infty}}+\beta\gamma\|u^n-u\|_{\L^2}\|v\|_{\L^2}\nonumber\\&\quad+\beta\|u^n-u\|_{\L^2}(\|u^n\|_{\L^4}^2+\|u^n\|_{\L^4}\|u\|_{\L^4}+\|u\|_{\L^4}^2)\|v\|_{\L^{\infty}}\nonumber\\&\leq \Big(\nu+\alpha(\|u^n\|_{\L^4}+\|u\|_{\L^4})+C\beta(1+\gamma)(\|u^n\|_{\L^2}+\|u\|_{\L^2})\nonumber\\&\quad+C\beta(\|u^n\|_{\L^4}^2+\|u^n\|_{\L^4}\|u\|_{\L^4}+\|u\|_{\L^4}^2)\Big)\|u^n-u\|_{\H_0^1}\|v\|_{\H^1_0},
	\end{align} 
	where we used H\"older's and Young's inequalities. As $\H_0^1(\mathcal{O})\subset\L^{\infty}(\mathcal{O})\subset\L^4(\mathcal{O}),$ the right hand side of the inequality \eqref{214} tends to zero as $n\to\infty$, since $u^n\to u$ in $\H_0^1(\mathcal{O})$. Hence the operator $F(\cdot)$ is demicontinuous, which implies that the operator $F(\cdot)$ is hemicontinuous. 
\end{proof}

\subsection{Stochastic setting} In this subsection, we provide the definition and properties of the noise used in this work and also give the hypothesis satisfied by the noise coefficient.

Let $(\Omega,\mathscr{F},\mathbb{P})$ be a complete probability space equipped with an increasing family of sub-sigma fields $\{\mathscr{F}_t\}_{0\leq t\leq T}$ of $\mathscr{F}$ satisfying  
\begin{enumerate}
	\item [(i)] $\mathscr{F}_0$ contains all elements $F\in\mathscr{F}$ with $\P(F)=0$,
	\item [(ii)] $\mathscr{F}_t=\mathscr{F}_{t+}=\bigcap\limits_{s>t}\mathscr{F}_s,$ for $0\leq t\leq T$.
\end{enumerate}
\begin{definition}
	A stochastic process $\{W(t)\}_{0\leq
		t\leq T}$ is said to be an \emph{$\L^2(\mathcal{O})$-valued $\mathscr{F}_t$-adapted
		Wiener process} with covariance operator $Q$ if
	\begin{enumerate}
		\item [$(i)$] for each non-zero $h\in \H$, $|Q^{\frac{1}{2}}h|^{-1} (W(t), h)$ is a standard one dimensional Wiener process,
		\item [$(ii)$] for any $h\in\L^2(\mathcal{O}), (W(t), h)$ is a martingale adapted to $\mathscr{F}_t$.
	\end{enumerate}
\end{definition}
The stochastic process $\{W(t) : 0\leq t\leq T\}$ is a $\L^2(\mathcal{O})$-valued Wiener process with covariance $Q$ if and only if for arbitrary $t$, the process $W(t)$ can be expressed as $W(t) =\sum\limits_{k=1}^{\infty}\sqrt{\mu_k}e_k(x)\beta_k(t)$, where  $\beta_{k}(t),k\in\mathbb{N}$ are independent one dimensional Brownian motions on $(\Omega,\mathscr{F},\mathbb{P})$ and $\{e_k \}_{k=1}^{\infty}$ are the orthonormal basis functions of $\L^2(\mathcal{O})$ such that $Q e_k=\mu_k e_k$.  If $W(\cdot)$ is an $\L^2(\mathcal{O})$-valued Wiener process with covariance
operator $Q$ such that $\Tr Q=\sum\limits_{k=1}^{\infty} \mu_k< +\infty$, then $W(\cdot)$ is a Gaussian
process on $\L^2(\mathcal{O})$ and $ \E[W(t)] = 0,$ $\textrm{Cov} [W(t)] = tQ,$
$t\geq 0.$ The space $\L^2_Q(\mathcal{O})=Q^{\frac{1}{2}}\L^2(\mathcal{O})$ is a Hilbert space
equipped with the inner product $(\cdot, \cdot)_0$,
$$(u, v)_0
=\sum_{k=1}^{\infty}\frac{1}{\lambda_k}(u,e_k)(v,e_k)=
\left(Q^{-\frac{1}{2}}u, Q^{-\frac{1}{2}}v\right),\ \text{for all } \
u, v\in \L^2_Q(\mathcal{O}),$$ where $Q^{-\frac{1}{2}}$ is the pseudo-inverse of
$Q^{\frac{1}{2}}$.

Let $\mathcal{L}(\L^2(\mathcal{O}))$ denote the space of all
bounded linear operators on $\L^2(\mathcal{O})$ and
$\mathcal{L}_{Q}:=\mathcal{L}_{Q}(\L^2(\mathcal{O}))$ denote the space of all
Hilbert-Schmidt operators from $\L^2_Q(\mathcal{O})=Q^{\frac{1}{2}}\L^2(\mathcal{O})$ to $\L^2(\mathcal{O})$.  Since $Q$ is a trace class operator, the
embedding of $\L^2_Q(\mathcal{O})$ in $\L^2(\mathcal{O})$ is Hilbert-Schmidt and
the space $\mathcal{L}_{Q}$ is a Hilbert space
equipped with the norm
$$
\left\|\Psi\right\|^2_{\mathcal{L}_{Q}}=\Tr\left(\Psi
{Q}\Psi^*\right)=\sum\limits_{k=1}^{\infty}\|
{Q}^{1/2}\Psi^*e_k\|_{\L^2}^2
$$ and inner product $$
\left(\Psi,\Phi\right)_{\mathcal{L}_{Q}}=\Tr\left(\Psi
{Q}\Phi^*\right)=\sum_{k=1}^{\infty}\left({Q}^{1/2}\Phi^*e_k,{Q}^{1/2}\Psi^*e_k\right)
.$$ For more details, the interested readers are referred to see \cite{DaZ}.

Let us assume that the noise coefficient $\sigma(\cdot,\cdot)$  satisfies the following hypothesis.

\begin{hypothesis}\label{hyp}
Let  
	\begin{itemize}
		\item [(H.1)] the noise coefficient $\sigma\in\C([0,T]\times\H_0^1(\mathcal{O});\mathcal{L}_{Q}(\L^2(\mathcal{O})))$,
		\item[(H.2)]  (growth condition)
		there exist a positive
		constant $K$ such that, for all $u\in \L^2(\mathcal{O})$ and $t\in[0,T]$,
		\begin{equation*}
		\|\sigma(t, u)\|^{2}_{\mathcal{L}_{Q}} 
		\leq K\left(1 +\|u\|_{\L^2}^{2}\right),
		\end{equation*}
		
		\item[(H.3)]  (Lipschitz condition)
		there exists a positive constant $L$ such that, for all $u_1, u_2\in \H$ and $t \in [0, T]$,  
		\begin{align*}\|\sigma(t, u_1) - \sigma(t,
		u_2)\|^2_{\mathcal{L}_{Q}}
		\leq L\|u_1 -
		u_2\|_{\L^2}^2.\end{align*}
	\end{itemize}
\end{hypothesis}	

With the above functional setting we rewrite the abstract formulation of the system \eqref{1.1}-\eqref{1.6} as 
\begin{equation}\label{abstract}
\left\{
\begin{aligned}
du(t)&=[- \nu Au(t)-\alpha B(u(t))+\beta c(u(t))]dt+\sigma(t,u(t))dW(t), \ t\in(0,T),\\
u(0)&=u_0,
\end{aligned}
\right.
\end{equation}
where $u_0\in\L^{2p}(\Omega;\L^{2}(\mathcal{O}))$, for $p>2$.

\section{Strong solution}\label{sec3}\setcounter{equation}{0} 
In this section, we prove the existence and uniqueness of strong solution to the system \eqref{abstract} by making use of local monotonicity results obtained in Theorem \ref{monotone} and stochastic generalization of localized version of the Minty-Browder technique.  Let us first  give the definition of a unique global strong solution to the system (\ref{abstract}). 
\begin{definition}[Global strong solution]\label{def3.1}
	Let $u_0\in\L^{2p}(\Omega;\L^2(\mathcal{O})),$ $p>2$ be given. An $\H_0^1(\mathcal{O})$-valued $(\mathscr{F}_t)_{t\geq 0}$-adapted progressively measurable stochastic process $u(\cdot)$ is called a \emph{strong solution} to (\ref{abstract}), if the following conditions are satisfied: 
	\begin{enumerate}
		\item [(i)] the process $$u\in\mathrm{L}^{2p}(\Omega;\L^{\infty}(0,T;\L^2(\mathcal{O})))\cap\L^2(\Omega;\mathrm{L}^2(0,T;\H_0^1(\mathcal{O})))\cap\L^4(\Omega;\L^4(0,T;\L^4(\mathcal{O})))$$ is having continuous modification (still denoted by ${u}$) with ${u}\in\C([0,T];\L^2(\mathcal{O}))\cap\L^2(0,T;\H_0^1(\mathcal{O}))$, $\mathbb{P}$-a.s.,
		\item [(ii)] the following equality holds for every $t\in [0, T ]$, as an element of $\H^{-1}(\mathcal{O}),$ $\mathbb{P}$-a.s.
		\begin{align}\label{4.4}
	u(t)&=u_0+\int_0^t[- \nu Au(s)-\alpha B(u(s))+\beta c(u(s))]ds+\int_0^t\sigma(s,u(s))dW(s).
		\end{align}
	\end{enumerate}
\end{definition}
An alternative version of condition (\ref{4.4}) is to require that for any  $v\in\H_0^1(\mathcal{O})$:
\begin{align}\label{4.5}
(	u(t),v)&=(u_0,v)+\int_0^t\langle- \nu Au(s)-\alpha B(u(s))+\beta c(u(s)),v\rangle ds+\int_0^t(\sigma(s,u(s))dW(s),v).
\end{align}	
\begin{definition}
	A strong solution $u(\cdot)$ to (\ref{abstract}) is called a
	\emph{pathwise unique strong solution} if
	$\widetilde{u}(\cdot)$ is an another strong
	solution, then  $$\mathbb{P}\Big\{\omega\in\Omega:u(t)=\widetilde{u}(t),\text{ for all } t\in[0,T]\Big\}=1.$$ 
\end{definition}

\subsection{Energy estimates}
Let us first show the energy estimates satisfied by the system \eqref{abstract}. 
Let the functions $w_k=w_k(x),$ $k=1,2,\ldots,$  be smooth, the set $\{w_k(x)\}_{k=1}^{\infty}$ be an orthogonal basis of  $\H_0^1(\mathcal{O})$ and orthonormal basis of $\H=\L^2(\mathcal{O})$. One can take $\{w_k(x)\}_{k=1}^{\infty}$ as the complete set of normlized eigenfunctions of the operator $-\partial_{xx}$ in $\H_0^1(\mathcal{O})$.  Let $\H_n$ be the
$n$-dimensional subspace of $\H$. Let $P_n$ denote
the orthogonal projection of $\H^{-1}(\mathcal{O})$ onto $\H_n$, that is, $P_nx=\sum\limits_{k=1}^n\langle x,w_k\rangle w_k$. Since every element $x\in\H$ induces a functional $x^*\in\H$  by the formula $\langle x^*,y\rangle=(x,y)$, $y\in\H_0^1(\mathcal{O})$, then $P_n\big|_{\H}$, the orthogonal projection of $\H$ onto $\H_n$  is given by $P_nx=\sum\limits_{k=1}^n(x,w_k)w_k$. Hence in particular, $P_n$ is the orthogonal projection from $\H$ onto $\text{span}\{w_1,\ldots,w_n\}$.  We define $B_n(u_n)=P_nB(u_n)$, $c_n(u_n)=P_nc(u_n)$, 
$W_n(\cdot)=P_nW(\cdot)$,  and
$\sigma_n(\cdot,u_n)=P_n\sigma(\cdot,u_n)$.
Let us now consider the following system of ODEs:
\begin{equation}\label{4.7}
\left\{
\begin{aligned}
d(u_n(t),v)&=(-\nu Au_n(t)-\alpha B_n(u_n(t))+\beta c_n(u_n(t)),v) d t+(\sigma_n(t,u_n(t))d W_n(t),v),\\
u_n(0)&=u_0^n,
\end{aligned}
\right.
\end{equation}
with $u_0^n=P_nu_0,$ for all $v\in\H_n$. Since $\B_n(\cdot)$ and $c_n(\cdot)$ are locally Lipschitz (see \eqref{2.1} and \eqref{2p7}), and  $\sigma_n(\cdot,\cdot)$  is globally Lipschitz, the system (\ref{4.7}) has a unique $\H_n$-valued local solution $u_n(\cdot)$ and $u_n\in\mathrm{L}^2(\Omega;\mathrm{L}^{\infty}(0,T^*;\H_n))$ with continuous sample paths. 
Let us now derive a-priori energy estimates satisfied by the system \eqref{4.7}. 
\begin{proposition}[Energy estimate]\label{prop1}
	Let $u_n(\cdot)$ be the unique solution of the system of stochastic
	ODE's (\ref{4.7}) with $u_0\in\L^{2p}(\Omega;\L^2(\mathcal{O}))$, $p>2$. Then, we have 
	\begin{align}\label{energy1}
	&\E\left[\sup_{t\in[0,T]}\|u_n(t)\|_{\L^2}^2+4\nu\int_0^{T}\|u_n(t)\|_{\H_0^1}^2d t+2\beta\int_0^{T}\|u_n(t)\|_{\L^4}^4dt\right]\nonumber\\&\quad \leq
(2\|u_0\|_{\L^2}^2+14KT)e^{4(\beta(1+\gamma^2)+7K)T}.\\
	&\E\bigg[\sup_{t\in[0,T]}\|u_n(t)\|_{\L^2}^{2p}+4p\nu\int_0^{T}\|u_n(t)\|_{\L^2}^{2(p-1)}\|u_n(t)\|_{\H_0^1}^2d t+2p\beta\int_0^{T}\|u_n(t)\|_{\L^2}^{2(p-1)}\|u_n(t)\|_{\L^4}^4dt\bigg]\nonumber\\&\quad \leq \left[2\|u_0\|_{\L^2}^{2p} +C(p,K,T)2^{p}T\right]e^{\left\{[2p\beta(1+\gamma^2)+C(p,K,T)2^{p}T]T\right\}},\label{energy2}
	\end{align}
	where $C(p,K,T)=(4(p-1))^{p-1}(14p-1)^pK^p$. 
\end{proposition}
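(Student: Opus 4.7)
The plan is to apply It\^o's formula to the finite-dimensional Galerkin SDE \eqref{4.7} with the test functions $\|\cdot\|_{\L^2}^2$ and $\|\cdot\|_{\L^2}^{2p}$, to exploit the structural identities already established for the operators, to control the resulting martingale via the Burkholder--Davis--Gundy (BDG) inequality, and to close the argument with Gronwall's lemma.

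For the first estimate I apply It\^o's formula to $\|u_n(t)\|_{\L^2}^2$. Four ingredients feed the resulting equation: (i) the viscous term gives $-2\nu\|u_n\|_{\H_0^1}^2$ via the duality $\langle Au_n,u_n\rangle=\|u_n\|_{\H_0^1}^2$; (ii) the convective term vanishes because $(B_n(u_n),u_n)=(B(u_n),P_nu_n)=(B(u_n),u_n)=0$ by \eqref{6}; (iii) the reaction term obeys $2\beta(c(u_n),u_n)\le -\beta\|u_n\|_{\L^4}^4+\beta(1+\gamma^2)\|u_n\|_{\L^2}^2$ by \eqref{7}; (iv) the It\^o correction satisfies $\|\sigma_n(t,u_n)\|_{\mathcal{L}_Q}^2\le K(1+\|u_n\|_{\L^2}^2)$ by Hypothesis~\ref{hyp}(H.2). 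Doubling, integrating in time, and moving the nonnegative dissipative pieces to the left yields, with $M_n(t):=\int_0^t(\sigma_n(s,u_n(s))dW_n(s),u_n(s))$,
\begin{align*}
2\|u_n(t)\|_{\L^2}^2 &+ 4\nu\int_0^t\|u_n\|_{\H_0^1}^2 ds + 2\beta\int_0^t\|u_n\|_{\L^4}^4 ds \\
&\le 2\|u_0\|_{\L^2}^2 + 2KT + 2\int_0^t[\beta(1+\gamma^2)+K]\|u_n\|_{\L^2}^2 ds + 4M_n(t).
\end{align*}
Taking $\sup_{t\in[0,T]}$ and then expectation, BDG bounds $\E\sup_{t\le T}|M_n(t)|$ by a constant multiple of $\E\bigl(\int_0^T K(1+\|u_n\|_{\L^2}^2)\|u_n\|_{\L^2}^2\,ds\bigr)^{1/2}$; Cauchy--Schwarz and Young's inequality then absorb $\frac{1}{2}\E\sup_{t\le T}\|u_n(t)\|_{\L^2}^2$ into the left-hand side, leaving a linear Gronwall inequality whose solution is \eqref{energy1}.

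For \eqref{energy2} I apply It\^o's formula to $\phi(x)=\|x\|_{\L^2}^{2p}$. The Hilbert-space chain rule yields a drift $2p\|u_n\|_{\L^2}^{2(p-1)}$ times the terms (i)--(iii) above, together with the trace correction $p\|u_n\|_{\L^2}^{2(p-1)}\|\sigma_n\|_{\mathcal{L}_Q}^2 + 2p(p-1)\|u_n\|_{\L^2}^{2(p-2)}\|\sigma_n^*u_n\|_{\L^2}^2$, which via $\|\sigma_n^*u_n\|_{\L^2}\le\|\sigma_n\|_{\mathcal{L}_Q}\|u_n\|_{\L^2}$ and (H.2) is dominated by $p(2p-1)K\|u_n\|_{\L^2}^{2(p-1)}(1+\|u_n\|_{\L^2}^2)$. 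Young's inequality $\|u_n\|_{\L^2}^{2(p-1)}\le c_p(1+\|u_n\|_{\L^2}^{2p})$ then folds every lower-order piece into constants plus $\|u_n\|_{\L^2}^{2p}$. The new martingale $2p\int_0^t\|u_n\|_{\L^2}^{2(p-1)}(\sigma_n(s,u_n)dW_n(s),u_n(s))$ is estimated by BDG as a constant multiple of $\E\bigl(\int_0^T\|u_n\|_{\L^2}^{4p-2}K(1+\|u_n\|_{\L^2}^2)\,ds\bigr)^{1/2}$, and a Young split absorbs $\frac{1}{2}\E\sup_t\|u_n(t)\|_{\L^2}^{2p}$ into the left. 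Gronwall produces \eqref{energy2}.

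The main obstacle is the constant bookkeeping in the BDG--Young step: the multiplicative noise drags a $\|u_n\|_{\L^2}^2$ (resp.\ $\|u_n\|_{\L^2}^{4p-2}$) factor inside the square root, so the Young split must be chosen to keep the coefficient of $\E\sup_t\|u_n\|_{\L^2}^{2p}$ on the right strictly below the left-hand coefficient, and the explicit prefactors $14K$, $7K$, and $C(p,K,T)=(4(p-1))^{p-1}(14p-1)^p K^p$ in the statement simply record this arithmetic together with a fixed BDG constant. A technical preliminary is that the Galerkin SDE \eqref{4.7} extends globally to $[0,T]$: since only local Lipschitz bounds are available for $B_n$ and $c_n$, one first runs the entire argument for the stopped process $u_n(\cdot\wedge\tau_N^n)$ with $\tau_N^n:=\inf\{t\in[0,T]:\|u_n(t)\|_{\L^2}\ge N\}\wedge T$, obtains the $N$-uniform estimate, and concludes $\tau_N^n\to T$ almost surely, so that the bounds pass to the limit and yield \eqref{energy1}--\eqref{energy2} on the full interval $[0,T]$.
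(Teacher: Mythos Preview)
Your proposal is correct and follows essentially the same route as the paper: apply It\^o's formula to $\|u_n\|_{\L^2}^2$ and $\|u_n\|_{\L^2}^{2p}$, use \eqref{6} to kill the convective term and \eqref{7} to handle the reaction, bound the noise via Hypothesis~\ref{hyp}(H.2), control the martingale by Burkholder--Davis--Gundy plus a Young split that absorbs a fraction of $\E\sup_t\|u_n\|_{\L^2}^{2p}$, and close with Gronwall, all run through the stopping times $\tau_N^n$ to justify the calculations and then pass to the limit. The only cosmetic differences are that the paper introduces $\tau_N^n$ at the outset rather than as a concluding remark, and in the $2p$ case it first extracts $\sup_t\|u_n\|_{\L^2}^p$ from the BDG bracket and then combines the leftover $\int\|u_n\|_{\L^2}^{2(p-1)}\|\sigma_n\|_{\mathcal{L}_Q}^2$ with the It\^o correction before applying the Young inequality that produces the constant $C(p,K,T)$.
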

\begin{proof}
Let us define a sequence of stopping times $\tau_N$ by
	\begin{align}\label{stopm}
	\tau_N^n:=\inf_{t\geq 0}\left\{t:\|u_n(t)\|_{\L^2}^2+\int_0^t\|u_n(s)\|_{\H_0^1}^2d s+\int_0^t\|u_n(s)\|_{\L^4}^4ds\geq N\right\},
	\end{align}
	for $N\in\mathbb{N}$. Applying  the finite dimensional It\^{o} formula to the process
	$\|u_n(\cdot)\|_{\L^2}^2$, we obtain (see Theorem 32, \cite{PEP})
	\begin{align}\label{3.6}
	\|u_n(\t)\|_{\L^2}^2&=
	\|u_n(0)\|_{\L^2}^2+2\int_0^{\t}\langle-\nu Au_n(s)-\alpha B_n(u_n(s))+\beta c_n(u_n(s)),u_n(s)\rangle d s
	\nonumber\\&\quad+\int_0^{\t}\|\sigma_n(s,u_n(s))\|^2_{\mathcal{L}_{Q}}d
	s 
	+2\int_0^{\t}\left(\sigma_n(s,u_n(s))dW_n(s),u_n(s)\right).
	\end{align}
	Note that $\langle\B_n(u_n),u_n\rangle=\langle\B(u_n),u_n\rangle=0$, using \eqref{6}. Using \eqref{7}, we estimate $(c_n(u_n),u_n)$ as 
	\begin{align}\label{3p7}
	(c(u_n),u_n)&\leq\frac{(1+\gamma^2)}{2}\|u_n\|_{\L^2}^2-\frac{1}{2}\|u_n\|_{\L^4}^4.
	\end{align}
	Let us use \eqref{3p7} in \eqref{3.6} and then take expectation to get 
	\begin{align}\label{3p8}
	&\E\left[\|u_n(\t)\|_{\L^2}^2+2\nu\int_0^{\t}\|u_n(s)\|_{\H_0^1}^2d s+\beta\int_0^{\t}\|u_n(s)\|_{\L^4}^4ds\right]\nonumber\\&\leq
	\|u_0\|_{\L^2}^2+\beta(1+\gamma^2)\E\left[\int_0^{\t}\|u_n(s)\|_{\L^2}^2ds\right] +\E\left[\int_0^{\t}\|\sigma_n(s,u_n(s))\|^2_{\mathcal{L}_{Q}}d
	s \right],
	\end{align}
where we used $\|u_n(0)\|_{\L^2}\leq \|u_0\|_{\L^2}$ and the fact that final term in the right hand side of (\ref{3.6}) is a local martingale. 	Using the Hypothesis \ref{hyp}  (H.2) in \eqref{3p8}, we obtain 
\begin{align}\label{3p9}
	&\E\left[\|u_n(\t)\|_{\L^2}^2+2\nu\int_0^{\t}\|u_n(s)\|_{\H_0^1}^2d s+\beta\int_0^{\t}\|u_n(s)\|_{\L^4}^4ds\right]\nonumber\\&\leq
\|u_0\|_{\L^2}^2+KT+\left[\beta(1+\gamma^2)+K\right]\E\left[\int_0^{t}\chi_{[0,\t)}(s)\|u_n(s)\|_{\L^2}^2ds\right],
\end{align}
An application of Gronwall's inequality in (\ref{3p9}) yields
	\begin{align}\label{3p10}
	&\E\left[\|u_n(\t)\|_{\L^2}^2\right]\leq
\left(\|u_0\|_{\L^2}^2+KT\right)e^{[\beta(1+\gamma^2)+K]T},
	\end{align}
	for all $t\in[0,T]$. It can be shown that 
	\begin{align}\label{3p11}
	\lim_{N\to\infty}\P\Big\{\omega\in\Omega:\tau_N(\omega)<t\Big\}=0, \ \textrm{
		for all }\ t\in [0,T],
	\end{align}
	and $\t\to t$ as $N\to\infty$. 
	On taking limit $N\to\infty$ in
	(\ref{3p10}) and using the \emph{monotone convergence theorem}, we get 
	\begin{align}\label{3p12}
\sup_{t\in[0,T]}	\E\left[\|u_n(t)\|_{\L^2}^2\right]\leq
\left(\|u_0\|_{\L^2}^2+KT\right)e^{[\beta(1+\gamma^2)+K]T}.
	\end{align}
Substituting (\ref{3p12}) in (\ref{3p9}), we finally arrive at
	\begin{align}\label{4.16a}
	&\E\left[\|u_n(t)\|_{\L^2}^2+2\nu\int_0^{t}\|u_n(s)\|_{\H_0^1}^2d s+\beta\int_0^{t}\|u_n(s)\|_{\L^4}^4ds\right]\leq
\left(\|u_0\|_{\L^2}^2+KT\right)e^{2[\beta(1+\gamma^2)+K]T},
	\end{align}
	for $t\in[0,T]$. Note that the right hand side of the inequality \eqref{4.16a} is independent of $n$. 

Let us  take supremum from $0$
	to $\T$ before taking expectation in (\ref{3p8})  to obtain
	\begin{align}\label{4.17}
	&\E\left[\sup_{t\in[0,\T]}\|u_n(t)\|_{\L^2}^2+2\nu\int_0^{\T}\|u_n(t)\|_{\H_0^1}^2d t+\beta\int_0^{\T}\|u_(t)\|_{\L^4}^4dt\right]\nonumber\\&\leq
	\|u_0\|_{\L^2}^2+\beta(1+\gamma^2)\E\left[\int_0^{\T}\|u_n(t)\|_{\L^2}^2dt\right]
	+\E\left[\int_0^{\T}\|\sigma_n(t,u_n(t))\|^2_{\mathcal{L}_{Q}}d t\right]  \nonumber\\&\quad +2\E\left[\sup_{t\in[0,\T]}\left|\int_0^{t}\left(\sigma_n(s,u_n(s))dW_n(s),u_n(s)\right)\right|\right].
	\end{align}
	Let us take the final term from the right hand side of the inequality (\ref{4.17}) and use Burkholder-Davis-Gundy (see Theorem 1, \cite{BD} and Theorem 1.1, \cite{DLB}), H\"{o}lder's and Young's inequalities to get
	\begin{align}\label{4.18}
&2\E\left[\sup_{t\in[0,\T]}\left|\int_0^{t}\left(\sigma_n(s,u_n(s))dW_n(s),u_n(s)\right)\right|\right]\nonumber\\&\leq 2\sqrt{3}\E\left[\int_0^{\T}\|\sigma_n(t,u_n(t))\|_{\mathcal{L}_{Q}}^2\|u_n(t)\|_{\L^2}^2d t\right]^{1/2}\nonumber\\&\leq 2 \sqrt{3}\E\left[\sup_{t\in[0,\T]}\|u_n(t)\|_{\L^2}\left(\int_0^{\T}\|\sigma_n(t,u_n(t))\|_{\mathcal{L}_{Q}}^2d t\right)^{1/2}\right]\nonumber\\&\leq \frac{1}{2} \E\Bigg[\sup_{t\in[0,\T]}\|u_n(t)\|_{\L^2}^2\Bigg]+6\E\Bigg[\int_0^{\T}\|\sigma_n(t,u_n(t))\|^2_{\mathcal{L}_{Q}}d
	t\Bigg].
	\end{align}
	Substituting (\ref{4.18})  in (\ref{4.17}), we find
	\begin{align}\label{4.20}
	&\E\left[\sup_{t\in[0,\T]}\|u_n(t)\|_{\L^2}^2+4\nu\int_0^{\T}\|u_n(t)\|_{\H_0^1}^2d t+2\beta\int_0^{\T}\|u_(t)\|_{\L^4}^4dt\right]\nonumber\\&\leq 2\|u_0\|_{\L^2}^2+14KT+(2\beta(1+\gamma^2)+14K)\E\left[\int_0^T\chi_{[0,\T)}(t)\|u_n(t)\|_{\L^2}^2d t\right],
	\end{align}
	where  we used the Hypothesis \ref{hyp} (H.2). Applying Gronwall's inequality in (\ref{4.20}), we obtain 
	\begin{align}\label{4.21}
	\E\left[\sup_{t\in[0,\T]}\|u_n(t)\|_{\L^2}^2\right]\leq (2\|u_0\|_{\L^2}^2+14KT)e^{(2\beta(1+\gamma^2)+14K)T}.
	\end{align}
	Passing $N\to\infty$, using the monotone convergence theorem and then substituting (\ref{4.21}) in (\ref{4.20}), we finally obtain the energy estimate in (\ref{energy1}). 
	
In order to prove the  estimate \eqref{energy2}, we apply the finite dimensional It\^o's formula to the process $\|u_n(\cdot)\|_{\L^2}^{2p}$ to obtain 
	\begin{align}\label{4.23a}
	&\|u_n(t)\|_{\L^2}^{2p}+2p\nu\int_0^{t}\|u_n(s)\|_{\L^2}^{2(p-1)}\|u_n(s)\|_{\H_0^1}^2d s\nonumber\\&=
	\|u_n(0)\|_{\L^2}^{2p}+2p\int_0^{t}\|u_n(s)\|_{\L^2}^{2(p-1)}\langle(-\alpha B_n(u_n(s))+\beta c_n(u_n(s)),u_n(s)\rangle d s
	\nonumber\\&\quad
	+2p\int_0^{t}\|u_n(s)\|_{\L^2}^{2(p-1)}\left(\sigma_n(s,u_n(s))dW_n(s),u_n(s)\right)
 \nonumber\\&\quad +p(2p-1)\int_0^t\|u_n(s)\|^{2(p-1)}_{\L^2}\textrm{Tr}(\sigma(s,u_n(s))
	Q\sigma(s,u_n(s)))d s.
	\end{align}
	We take supremum over time from $0$ to $\T$, take expectation in (\ref{4.23a})  to obtain 
	\begin{align}\label{4.24a}
	&\E\bigg[\sup_{t\in[0,\T]}\|u_n(t)\|_{\L^2}^{2p}+2p\nu\int_0^{\T}\|u_n(t)\|_{\L^2}^{2(p-1)}\|u_n(t)\|_{\H_0^1}^2d t+2p\beta\gamma \int_0^{\T}\|u_n(t)\|_{\L^2}^{2p}dt\nonumber\\&\qquad+2p\beta\int_0^{\T}\|u_n(t)\|_{\L^2}^{2(p-1)}\|u_n(t)\|_{\L^4}^4dt\bigg]\nonumber\\&\leq 
\E\left[\|u_0\|_{\L^2}^{2p}\right]+2p\beta(1+\gamma)\E\left[\int_0^{\T}\|u_n(t)\|_{\L^2}^{2(p-1)}(u_n^2(t),u_n(t))d t\right]
	\nonumber\\&\quad
	+2p\E\left[\sup_{t\in[0,\T]}\left|\int_0^{t}\|u_n(s)\|_{\L^2}^{2(p-1)}\left(\sigma_n(s,u_n(s))dW_n(s),u_n(s)\right)\right|\right]
	\nonumber\\&\quad+p(2p-1)\E\left[\int_0^{\T}\|u_n(t)\|_{\L^2}^{2(p-1)}\|\sigma_n(t,u_n(t))\|^2_{\mathcal{L}_{Q}}d
		t\right]\nonumber\\&=\sum_{i=1}^3J_i,
	\end{align}
	where $J_i$, for $i=1,2,3$ are the final three terms appearing the right hand side of the inequality \eqref{4.24a}.
	Let us use H\"older's  and Young's inequalities to estimate $J_1$ as 
	\begin{align}\label{3.54z}
	J_1&\leq 2p\beta(1+\gamma)\E\left[\int_0^{\T}\|u_n(t)\|_{\L^2}^{2(p-1)}\|u_n(t)\|_{\L^4}^2\|u_n(t)\|_{\L^2}d t\right]\nonumber\\&\leq 2p\beta(1+\gamma)\E\left[\left(\int_0^{\T}\|u_n(t)\|_{\L^2}^{2(p-1)}\|u_n(t)\|_{\L^4}^4dt\right)^{1/2}\left(\int_0^{\T}\|u_n(t)\|_{\L^2}^{2p}dt\right)^{1/2}\right]\nonumber\\&\leq p\beta\E\left[\int_0^{\T}\|u_n(t)\|_{\L^2}^{2(p-1)}\|u_n(t)\|_{\L^4}^4dt\right]+p\beta(1+\gamma)^2\E\left[\int_0^{\T}\|u_n(t)\|_{\L^2}^{2p}dt\right].
	\end{align}
	Using Burkholder-Davis-Gundy, H\"older's and Young's inequalities, we estimate $J_2$ as 
	\begin{align}\label{4.31z}
	J_2&\leq 2p\sqrt{3}\E\left[\int_0^{\T}\|u_n(t)\|_{\L^2}^{4p-2}\|\sigma_n(t,u_n(t))\|_{\mathcal{L}_{Q}}^2d t\right]^{1/2}\nonumber\\&\leq  2p\sqrt{3}\E\left[\sup_{t\in[0,\T]}\|u_n(t)\|_{\L^2}^{p}\left(\int_0^{\T}\|u_n(t)\|_{\L^2}^{2(p-1)}\|\sigma_n(t,u_n(t))\|_{\mathcal{L}_{Q}}^2d t\right)^{1/2}\right]\nonumber\\&\leq \frac{1}{4}\E\left[\sup_{t\in[0,\T]}\|u_n(t)\|_{\L^2}^{2p}\right]+12p^2\E\left[\int_0^{\T}\|u_n(t)\|_{\L^2}^{2(p-1)}\|\sigma_n(t,u_n(t))\|_{\mathcal{L}_{Q}}^2d t\right]\nonumber\\&\leq\frac{1}{4}\E\left[\sup_{t\in[0,\T]}\|u_n(t)\|_{\L^2}^{2p}\right]+J_4.
	\end{align}
	Let us use H\"older's and Young's inequalities to estimate $J_3+J_4$ as 
	\begin{align}\label{4.32z}
	J_3+J_4&\leq p(14p-1)\E\left[\sup_{t\in[0,T]}\|u_n(t)\|_{\L^2}^{2(p-1)}\int_0^{\T}\|\sigma_n(t,u_n(t))\|_{\mathcal{L}_{Q}}^2d t\right]\nonumber\\&\leq\frac{1}{4}\E\left[\sup_{t\in[0,\T]}\|u_n(t)\|_{\L^2}^{2p}\right]\nonumber\\&\qquad+\frac{1}{p}\left(\frac{4(p-1)}{p}\right)^{p-1}(p(14p-1))^{p}\E\left[\int_0^{\T}\|\sigma_n(t,u_n(t))\|_{\mathcal{L}_{Q}}^2d t\right]^p\nonumber\\&\leq\frac{1}{4}\E\left[\sup_{t\in[0,\T]}\|u_n(t)\|_{\L^2}^{2p}\right]+(4(p-1))^{p-1}(14p-1)^pK^p\E\left[\int_0^{\T}(1+\|u_n(s)\|_{\L^2}^2)ds\right]^p\nonumber\\&\leq \frac{1}{4}\E\left[\sup_{t\in[0,\T]}\|u_n(t)\|_{\L^2}^{2p}\right]+C(p,K,T)\E\left[\int_0^{\T}(1+\|u_n(s)\|_{\L^2}^2)^pds\right]\nonumber\\&\leq \frac{1}{4}\E\left[\sup_{t\in[0,\T]}\|u_n(t)\|_{\L^2}^{2p}\right]+C(p,K,T)2^{p-1}T\nonumber\\&\quad+C(p,K,T)2^{p-1}\E\left[\int_0^{\T}\|u_n(s)\|_{\L^2}^{2p}ds\right],
	\end{align}
	where $C(p,K,T)=(4(p-1))^{p-1}(14p-1)^pK^p$. 
	Combining (\ref{3.54z})-(\ref{4.32z}) and using it in (\ref{4.24a}), we arrive at
	\begin{align}\label{4.35z}
	&\E\bigg[\sup_{t\in[0,\T]}\|u_n(t)\|_{\L^2}^{2p}+4p\nu\int_0^{\T}\|u_n(t)\|_{\L^2}^{2(p-1)}\|u_n(t)\|_{\H_0^1}^2d t\nonumber\\&\qquad+2p\beta\int_0^{\T}\|u_n(t)\|_{\L^2}^{2(p-1)}\|u_n(t)\|_{\L^4}^4dt\bigg]\nonumber\\&\leq  2\|u_0\|_{\L^2}^{2p} +C(p,K,T)2^{p}T +[2p\beta(1+\gamma^2)+C(p,K,T)2^{p}T]\E\left[\int_0^{\T}\|u_n(t)\|_{\L^2}^{2p}dt\right].
	\end{align}
 Let us apply  Gronwall's inequality in (\ref{4.35z})  to get 
	\begin{align}\label{4.36z}
	&\E\left[\sup_{t\in[0,\T]}\|u_n(t)\|_{\L^2}^{2p}\right]\leq \left[2\|u_0\|_{\L^2}^{2p} +C(p,K,T)2^{p}T\right]e^{\left\{[2p\beta(1+\gamma^2)+C(p,K,T)2^{p}T]T\right\}}.
	\end{align}
	Passing $N\to\infty$, using the monotone convergence theorem in (\ref{4.36z}) and then applying it in (\ref{4.35z}), we arrive at (\ref{energy2}).
\end{proof}
\begin{remark}
From \eqref{3.6}, using Hypothesis \ref{hyp} (H.2) and It\^o isometry, we have 
\begin{align}\label{3.26}
&\E\left[\left(\int_0^{T}\|u_n(s)\|_{\H_0^1}^2ds\right)^2\right]\nonumber\\&\leq\E\bigg[ \bigg(\|u_0\|_{\L^2}^2+\beta(1+\gamma^2)\int_0^{T}\|u_n(s)\|_{\L^2}^2ds+\int_0^{T}\|\sigma_n(t,u_n(t))\|^2_{\mathcal{L}_{Q}}dt 
\nonumber\\&\qquad+2\int_0^{T}\left(\sigma_n(t,u_n(t))dW_n(t),u_n(t)\right)\bigg)^2\bigg]\nonumber\\&\leq 4\E\left[\|u_0\|_{\L^2}^4\right]+8KT+4[\beta(1+\gamma^2)T+2KT]\mathbb{E}\left[\sup_{t\in[0,T]}\|u_n(s)\|_{\L^2}^4\right]\nonumber\\&\quad +16\E\left[\int_0^T\|\sigma_n(s,u_n(s))\|_{\mathcal{L}(Q)}^2\|u_n(s)\|_{\L^2}^2ds\right]\nonumber\\&\leq 4\E\left[\|u_0\|_{\L^2}^4\right]+8KT+16KT\mathbb{E}\left[\sup_{t\in[0,T]}\|u_n(s)\|_{\L^2}^2\right]\nonumber\\&\quad+4[\beta(1+\gamma^2)T+18KT]\mathbb{E}\left[\sup_{t\in[0,T]}\|u_n(s)\|_{\L^2}^4\right]<+\infty,
\end{align}
by using \eqref{energy2}. Similarly, we have 
\begin{align}\label{3p27}
\E\left[\left(\int_0^{T}\|u_n(t)\|_{\L^4}^4dt\right)^2\right]&\leq  4\E\left[\|u_0\|_{\L^2}^4\right]+8KT+16KT\mathbb{E}\left[\sup_{t\in[0,T]}\|u_n(t)\|_{\L^2}^2\right]\nonumber\\&\quad+4[\beta(1+\gamma^2)T+18KT]\mathbb{E}\left[\sup_{t\in[0,T]}\|u_n(t)\|_{\L^2}^4\right]<+\infty.
\end{align}
\end{remark}

\subsection{Existence and uniqueness of strong solution} Let us now prove that the system (\ref{abstract}) has a unique global strong solution by exploiting the local monotonicity property (see (\ref{3.11y})) and a stochastic generalization of the Minty-Browder technique. This method is applied  in \cite{ICAM} for establishing  the existence of strong solutions to  stochastic 2D hydrodynamical type systems. Similar existence results for the 2D stochastic Navier-Stokes equations driven by Gaussian noise can be found in \cite{MJSS,SSSP} and stochastic 2D Oldroyd model for viscoelastic models  can be found in \cite{MTM3}. 
\begin{theorem}[Existence and uniqueness of strong solution to the system (\ref{abstract})]\label{exis}
	Let $u_0\in \L^{2p}(\Omega;\L^2(\mathcal{O})),$ for $p>2$ be given.  Then there exists a \emph{unique strong solution}
	$u(\cdot)$ to the problem (\ref{abstract}) such that $$u\in \mathrm{L}^{2p}(\Omega;\mathrm{L}^{\infty}(0,T;\L^2(\mathcal{O})))\cap\L^2(\Omega;\mathrm{L}^2(0,T;\H_0^1(\mathcal{O})))\cap\L^4(\Omega;\L^4(0,T;\L^4(\mathcal{O})))$$  and $u(\cdot)$ is having a $\mathbb{P}$-a.s., continuous modification in $\C([0,T];\L^2(\mathcal{O}))\cap\mathrm{L}^2(0,T;\H_0^1(\mathcal{O}))$.
\end{theorem}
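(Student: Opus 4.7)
\textbf{Proof plan for Theorem \ref{exis}.} My plan is to use a Galerkin approximation combined with weak compactness and a local version of the Minty--Browder trick that leverages the monotonicity estimate \eqref{3.7}. The finite-dimensional SDE \eqref{4.7} is already set up, and Proposition \ref{prop1} provides a priori bounds uniform in $n$. Using these bounds together with the local Lipschitz property of $B$ and $c$ and the global Lipschitz property of $\sigma$, I would first observe that the solution $u_n$ can be extended globally, so that $u_n$ is defined on $[0,T]$ and
$u_n\in\L^{2p}(\Omega;\L^\infty(0,T;\H_n))\cap\L^2(\Omega;\L^2(0,T;\H_n\cap\H_0^1(\mathcal{O})))\cap\L^4(\Omega;\L^4(0,T;\L^4(\mathcal{O})))$
uniformly in $n$. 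The bounds \eqref{energy1}--\eqref{energy2}, together with \eqref{3.26}--\eqref{3p27}, give the uniform estimates needed for the next step.

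Next, Banach--Alaoglu (applied in the appropriate reflexive spaces and the weak-$\star$ topology for $\L^\infty$ in time) produces a subsequence, still labeled $u_n$, and a limit $u$ with
$u_n\rightharpoonup u$ in $\L^2(\Omega;\L^2(0,T;\H_0^1(\mathcal{O})))$,
$u_n\overset{w^\star}{\rightharpoonup}u$ in $\L^{2p}(\Omega;\L^\infty(0,T;\L^2(\mathcal{O})))$, and
$u_n\rightharpoonup u$ in $\L^4(\Omega;\L^4(0,T;\L^4(\mathcal{O})))$. By \eqref{2p5} and \eqref{2p7}, $B(u_n)$ is bounded in $\L^2(\Omega;\L^2(0,T;\H^{-1}(\mathcal{O})))$ and $c(u_n)$ is bounded in $\L^{4/3}(\Omega;\L^{4/3}(0,T;\L^{4/3}(\mathcal{O})))$; extract further weakly convergent subsequences with limits $B_0$ and $c_0$ respectively. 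The stochastic-integral term converges weakly in $\L^2(\Omega;\L^2(0,T;\L^2(\mathcal{O})))$ to $\int_0^\cdot\Sigma(s)\,dW(s)$ for some predictable $\mathcal{L}_Q$-valued process $\Sigma$, using the hypothesis on $\sigma$. Passing to the limit in \eqref{4.7} tested against $w_k$ yields
\begin{equation*}
u(t)=u_0+\int_0^t\bigl[-\nu Au(s)-\alpha B_0(s)+\beta c_0(s)\bigr]\,ds+\int_0^t\Sigma(s)\,dW(s)
\end{equation*}
in $\H^{-1}(\mathcal{O})$ a.s. Standard arguments then give $u\in\C([0,T];\L^2(\mathcal{O}))$ $\P$-a.s. and the energy identity for $\|u(t)\|_{\L^2}^2$.

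The main obstacle is the identification $B_0=B(u)$, $c_0=c(u)$ and $\Sigma=\sigma(\cdot,u)$, since $F=\nu A+\alpha B-\beta c$ is only \emph{locally} monotone (Theorem \ref{monotone}). To handle this I would localize as in \cite{ICAM,MJSS}: for $v\in\L^\infty(0,T;\H_0^1(\mathcal{O}))$ with $\|v(t)\|_{\L^\infty}\leq r$, introduce the weight
\begin{equation*}
\rho(t)=\int_0^t\!\left\{\tfrac{\alpha^2}{\nu}\|v(s)\|_{\L^\infty}^2+2\beta(1+\gamma+\gamma^2)+L\right\}ds,
\end{equation*}
apply It\^o's formula to $e^{-\rho(t)}\|u_n(t)\|_{\L^2}^2$ and to $e^{-\rho(t)}\|u(t)\|_{\L^2}^2$, take expectation, and subtract. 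Using Corollary \ref{mon1} with this weight, I obtain
\begin{equation*}
\E\!\left[e^{-\rho(t)}\|u(t)\|_{\L^2}^2\right]\leq \E\|u_0\|_{\L^2}^2+2\E\!\int_0^t e^{-\rho(s)}\langle F(v(s))-G(s),u(s)-v(s)\rangle\,ds,
\end{equation*}
where $G(s)=\nu Au(s)+\alpha B_0(s)-\beta c_0(s)$. Lower semicontinuity of the norm under weak convergence handles the limit of the $u_n$-side. Choosing $v=u-\lambda\varphi$ for $\varphi$ bounded in $\L^\infty(\Omega\times(0,T);\H_0^1)$ (so $v$ lies in an $\L^\infty$ ball on a suitable set $\Omega_M\times(0,T)$ with $\P(\Omega_M)\uparrow 1$), dividing by $\lambda>0$ and letting $\lambda\downarrow 0$, and using hemicontinuity from Lemma \ref{lem2.5}, yields $G=F(u)$, i.e.\ $B_0=B(u)$ and $c_0=c(u)$. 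The same Minty-type step with $\sigma$ in the It\^o correction (via Hypothesis \ref{hyp}(H.3)) gives $\Sigma=\sigma(\cdot,u)$.

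Finally, uniqueness and the continuous modification follow from applying It\^o's formula to $e^{-\rho(t)}\|u^{(1)}(t)-u^{(2)}(t)\|_{\L^2}^2$ for two strong solutions $u^{(1)},u^{(2)}$, where this time the exponential weight uses $v=u^{(2)}$ up to a stopping time $\tau_R=\inf\{t:\|u^{(2)}(t)\|_{\L^\infty}>R\}$. Corollary \ref{mon1} combined with Gronwall gives $u^{(1)}=u^{(2)}$ on $[0,\tau_R]$, and sending $R\to\infty$ (using $u^{(2)}\in\L^2(0,T;\H_0^1)\hookrightarrow\L^2(0,T;\L^\infty)$ a.s.) yields pathwise uniqueness on $[0,T]$.
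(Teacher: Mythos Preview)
Your overall strategy --- Galerkin approximation, uniform a priori bounds from Proposition~\ref{prop1}, weak compactness, and a localized Minty--Browder identification with an exponential weight --- is exactly the paper's. The gap is in how you implement the identification and the uniqueness.

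In the Minty--Browder step you impose $\|v(t)\|_{\L^\infty}\le r$ and then try to place $v=u-\lambda\varphi$ in such a ball by restricting to $\Omega_M\times(0,T)$. This fails: the limit $u$ is only in $\L^2(0,T;\H_0^1(\mathcal{O}))$ pathwise, so $t\mapsto\|u(t)\|_{\L^\infty}$ is merely $\L^2$ in time, and no cutoff in $\omega$ alone makes it bounded in $t$. The point you are missing is that the ball constraint is unnecessary once the weight already carries $\|v(t)\|_{\L^\infty}^2$: the pointwise estimate \eqref{2p13} holds for \emph{any} $v(t)\in\H_0^1(\mathcal{O})$, and Corollary~\ref{mon1} only needs $\int_0^T\|v(t)\|_{\L^\infty}^2\,dt<\infty$, which follows from $v\in\L^2(0,T;\H_0^1(\mathcal{O}))$ via $\H_0^1\hookrightarrow\L^\infty$ in one dimension. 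The paper exploits this by first taking $v\in\L^2(\Omega;\L^2(0,T;\H_m))$, passing $n\to\infty$ to obtain \eqref{4.54}, extending by density to all of $\mathcal{J}$, and then choosing $v=u$ (yielding $\sigma(\cdot,u)=\Phi$) and $v=u+\lambda w$ (yielding $F(u)=F_0$ via hemicontinuity and the integrability check \eqref{4.57}, which uses \eqref{energy2} and \eqref{3.26}); no $\Omega_M$-localization is needed.

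The same misstep recurs in your uniqueness proof: the stopping time $\tau_R=\inf\{t:\|u^{(2)}(t)\|_{\L^\infty}>R\}$ is ill-posed because $t\mapsto\|u^{(2)}(t)\|_{\L^\infty}$ is not known to be continuous (only the $\L^2$-norm is). The paper instead bases the stopping times on the continuous process $\|u_i(t)\|_{\L^2}$ and places the $\H_0^1$-norm in the \emph{weight}, $\rho(t)=\frac{C\alpha^2}{\nu}\int_0^t\|u_2(s)\|_{\H_0^1}^2\,ds$; combined with \eqref{2.11} and Gronwall this gives \eqref{4.63} directly.
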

\begin{proof}
	The proof of the solvability results of the system (\ref{abstract}) is divided into the following steps.
	
	\vskip 0.2cm
	\noindent\textbf{Step (1):} \emph{Finite-dimensional (Galerkin) approximation of the system (\ref{abstract}):} 	Let us first consider the following It\^{o} stochastic
	differential equation satisfied by $\{u_n(\cdot)\}$:
	\begin{equation}\label{4.37}
	\left\{
	\begin{aligned}
	du_n(t)&=-F(u_n(t))d
	t+\sigma_n(t,u_n(t))dW_n(t),\\
	u_n(0)&=u_0^n,
	\end{aligned}
	\right.
	\end{equation}
	where
	$F(u_n)=\nu A u_n+\alpha B_n(u_n)-\beta c_n(u_n)$. Applying It\^o's formula to the process $e^{-r(t)}\|u_n(\cdot)\|_{\L^2}^2$, we have the following equality:
	\begin{align}\label{4.38}
	e^{-r(t)}\|u_n(t)\|_{\L^2}^2&=e^{-r(0)}\|u_n(0)\|_{\L^2}^2-\int_0^te^{-r(s)}\langle2F(u_n(s))+r'(s)u_n(s),u_n(s)\rangle d
	s\\&\quad+2\int_0^te^{-r(s)}\left(\sigma_n(s,u_n(s))dW_n(s),u_n(s)\right)
	+\int_0^te^{-r(s)}\|\sigma_n(s,u_n(s))\|_{\mathcal{L}_{Q}}^2d
	s,\nonumber
	\end{align}
	for all $t\in[0,T]$. The quantity $r(t)$ appearing in \eqref{4.38} will be chosen later. Note that the third term from the right hand side of the equality (\ref{4.38}) is a martingale and on taking expectation, we get 
	\begin{align}\label{4.39}
	\E\left[e^{-r(t)}\|u_n(t)\|_{\L^2}^2\right]&=\E\left[e^{-r(0)}\|u_n(0)\|_{\L^2}^2\right]-\E\left[\int_0^te^{-r(s)}\langle 2F(u_n(s))+r'(s)u_n(s),u_n(s)\rangle d
	s\right]\nonumber\\&\quad+\E\left[\int_0^t
	e^{-r(s)}\|\sigma_n(s,u_n(s))\|_{\mathcal{L}_{Q}}^2d s\right],
	\end{align}
	for all $t\in[0,T]$.
	
	\vskip 0.2cm
	
	\noindent\textbf{Step (2):} \emph{Weak convergence of the
		sequences $u_n(\cdot)$, $F(u_n(\cdot))$ and
		$\sigma_n(\cdot,\cdot)$.} 
	We know that 
	$\mathrm{L}^2\left(\Omega;\mathrm{L}^{\infty}(0,T;\L^2(\mathcal{O}))\right)\cong
	\left(\mathrm{L}^{2}\left(\Omega;\mathrm{L}^1(0,T;\L^2(\mathcal{O}))\right)\right)'$, and  $\mathrm{L}^{2}\left(\Omega;\mathrm{L}^1(0,T;\L^2(\mathcal{O}))\right)$ is separable and the spaces  $\mathrm{L}^2(\Omega;\mathrm{L}^2(0,T;\H_0^1(\mathcal{O})))$ and $\mathrm{L}^4(\Omega;\mathrm{L}^4(0,T;\L^4(\mathcal{O})))$ are reflexive.  Using the energy estimates in Proposition \ref{prop1}, and 	Banach-Alaoglu theorem, we can extract a subsequence	$\{u_{n_k}\}$ of $\{u_n\}$, which converges to the following limits	(for simplicity, we denote the index $n_k$ by $n$):
	\begin{equation}\label{4.40}
	\left\{
	\begin{aligned}
	u_n&\xrightarrow{w^{*}} u\textrm{ in
	}\mathrm{L}^2(\Omega;\mathrm{L}^{\infty}(0,T ;\L^2(\mathcal{O}))),\\ u_n&\xrightarrow{w} u\textrm{ in
	}\mathrm{L}^4(\Omega;\mathrm{L}^{4}(0,T ;\L^4(\G))),\\ 
	u_n&\xrightarrow{w} u\textrm{ in
	}\mathrm{L}^2(\Omega;\mathrm{L}^{2}(0,T ;\H_0^1(\mathcal{O}))),\\ 
	u_n(T)&\xrightarrow{w}\eta \in\mathrm{L}^2(\Omega;\L^2(\mathcal{O})),\\
	F(u_n)&\xrightarrow{w} F_0\textrm{ in
	}\mathrm{L}^{1+\eta}(\Omega;\mathrm{L}^2(0,T ;\H^{-1}(\mathcal{O}))),
	\end{aligned}
	\right.
	\end{equation}
	for some $0<\eta\leq \frac{p-2}{p+2}$. 
	The final convergence  in (\ref{4.5}) can be justified as follows: 
	\begin{align}\label{4.41}
	&\E\left[\int_0^T\left\|F(u_n(t))\right\|_{\H^{-1}}^{1+\eta}d t\right]\nonumber\\&\leq C(\eta)\left\{ \nu\E\left[\int_0^T\|Au_n(t)\|_{\H^{-1}}^{1+\eta}d t\right]+\alpha\E\left[\int_0^T\|B_n(u_n(t))\|_{\H^{-1}}^{1+\eta}d t\right]+\beta\E\left[\int_0^T\|c(u_n(t))\|_{\H^{-1}}^{1+\eta}d t\right]\right\}\nonumber\\&\leq C(\eta)\bigg\{ \nu\E\left[\int_0^T\|u_n(t)\|_{\H_0^1}^{1+\eta}dt\right]+\left[\alpha+\beta(1+\gamma)\right]\E\left[\int_0^T\|u_n(t)\|_{\L^4}^{2(1+\eta)}dt\right]\nonumber\\&\qquad+\beta\gamma\E\left[\int_0^T\|u_n(t)\|_{\L^2}^{1+\eta}dt\right]+\beta\E\left[\int_0^T\|u_n(t)\|_{\L^6}^{3(1+\eta)}dt\right]\bigg\}\nonumber\\&\leq C(\eta)\bigg\{ T^{\frac{1-\eta}{2}}\left\{\E\left[\int_0^T\|u_n(t)\|_{\H_0^1}^2d t\right]\right\}^{\frac{1+\eta}{2}}+[\alpha+\beta(1+\gamma)]T^{\frac{1-\eta}{2}}\left\{\E\left[\int_0^T\|u_n(t)\|_{\L^4}^4dt\right]\right\}^{\frac{1+\eta}{2}}\nonumber\\&\quad+\beta\gamma T^{\frac{1-\eta}{2}}\E\left[\int_0^T\|u_n(t)\|_{\L^2}^2dt\right]^{\frac{1+\eta}{2}}+\beta\E\left[\int_0^T\|u_n(t)\|_{\L^6}^{3(1+\eta)}dt\right]\bigg\},
	\end{align}
	where we used \eqref{2p5} and  H\"older's  inequality. The final term from \eqref{4.41} can be 	controlled by Gagliardo-Nirenberg interpolation and H\"older's inequalities as
	\begin{align}
&\E\left[\int_0^T\|u_n(t)\|_{\L^6}^{3(1+\eta)}dt\right]\nonumber\\&\leq C\E\left[\int_0^T\|\partial_xu_n(t)\|_{\L^2}^{(1+\eta)}\|u_n(t)\|_{\L^2}^{2(1+\eta)}dt\right]\nonumber\\&\leq C\mathbb{E}\left[\left(\int_0^T\|u_n(t)\|_{\H_0^1}^2dt\right)^{\frac{1+\eta}{2}}\left(\int_0^T\|u_n(t)\|_{\L^2}^{\frac{4(1+\eta)}{1-\eta}}dt\right)^{\frac{1-\eta}{2}}\right]\nonumber\\&\leq CT^{\frac{1-\eta}{2}}\left\{\mathbb{E}\left[\int_0^T\|u_n(t)\|_{\H_0^1}^2dt\right]\right\}^{\frac{1+\eta}{2}}\left\{\mathbb{E}\left[\sup_{t\in[0,T]}\|u_n(t)\|_{\L^2}^{\frac{4(1+\eta)}{1-\eta}}\right]\right\}^{\frac{1-\eta}{2}}<+\infty,
	\end{align}
using Proposition \eqref{prop1} (see \eqref{energy1} and \eqref{energy2}). Using the Hypothesis \ref{hyp} (H.1) and energy
	estimates given in Proposition \ref{prop1},
	we have
	\begin{align}\label{4.42}
	\E\left[\int_0^{T
	}\|\sigma_n(t,u_n(t))\|_{\mathcal{L}_{Q}}^2\d
	t\right]&\leq K\E\left[\int_0^T\left(1+\|u_n(t)\|_{\L^2}^2\right)d t\right]\nonumber\\&\leq
	KT(2\|u_0\|_{\L^2}^2+14KT)e^{4(\beta(1+\gamma^2)+7K)T}<+\infty.
	\end{align}
	Thus, we can extract a subsequence
	$\{\sigma_{n_k}(\cdot,u_{n_k})\}$ which converge to the
	following limit (denoting the index $n_k$ by $n$):
	\begin{equation}\label{4.43z}
	\sigma_n(\cdot,u_n)P_n\xrightarrow{w}
	\Phi(\cdot)\textrm{ in
	}\mathrm{L}^2(\Omega;\mathrm{L}^2(0,T ;\mathcal{L}_{Q}(\L^2(\mathcal{O})))).
	\end{equation}
	As discussed in Theorem 7.5 \cite{chow}, one can prove that $u(\cdot)$ satisfies the
	It\^{o} stochastic differential:
	\begin{equation}\label{4.44}
	\left\{
	\begin{aligned}
	du(t)&=-F_0(t)d
	t+\Phi(t)dW(t),\\
	u(0)&=u_0.
	\end{aligned}
	\right.
	\end{equation}
	A calculation similar to (\ref{4.39}) yields
	\begin{align}\label{4.45}
	\E\left[e^{-r(t)}\|u(t)\|_{\L^2}^2\right]&=\E\left[e^{-r(0)}\|u_0\|_{\L^2}^2\right]-\E\left[\int_0^te^{-r(s)}\langle 2F_0(s)+r'(s)u(s),u(s)\rangle d
	s\right]\nonumber\\&\quad+\E\left[\int_0^t
	e^{-r(s)}\|\Phi(s)\|_{\mathcal{L}_{Q}}^2d s\right],
	\end{align}
	for all $t\in[0,T]$. Also, it should be noted that the initial value
	$u_n(0)$ converges to $u_0$ strongly in $\L^2(\Omega;\L^2(\mathcal{O}))$, that is,
	\begin{align}\label{4.46}
	\lim_{n\to\infty}\E\left[\|u_n(0)-u_0\|_{\L^2}^2\right]=0.
	\end{align}

	\noindent\textbf{Step (3):} \emph{Minty-Browder
		technique and global strong solution.} Let us now prove that $F(u(\cdot))=F_0(\cdot)$ and
	$\sigma(\cdot,u(\cdot))=\Phi(\cdot)$. For
	$v\in\mathrm{L}^2(\Omega;\mathrm{L}^{2}(0,T;\H_m))$ with $m<n$, let us
	define
	\begin{align}\label{4.47}
	r(t)=\frac{\alpha^2}{\nu}\int_0^t\|v(s)\|_{\L^{\infty}}^2ds+[2\beta(1+\gamma+\gamma^2)+L]t,
	\end{align}
	so that 
	\begin{align*}
	r'(t)=\frac{\alpha^2}{\nu}\|v(t)\|_{\L^{\infty}}^2+[2\beta(1+\gamma+\gamma^2)+L], \ \text{ a.e.}
	\end{align*}
	From the local
	monotonicity result (see (\ref{3.11y})),
	we have
	\begin{align}\label{4.48}
	&\E\bigg[\int_0^{T}e^{-r(t)}\Big(2\langle F(v(t))- F(u_n(t)),v(t)-u_n(t)\rangle
	+r'(t)\left(v(t)-u_n(t),v(t)-u_n(t)\right)\Big)d
	t\bigg]\nonumber\\&\geq \E\left[\int_0^{T}e^{-r(t)}\|\sigma_n(t,
	v(t)) - \sigma_n(t,u_n(t))\|^2_{\mathcal{L}_{Q}}d
	t\right].
	\end{align}
	In (\ref{4.48}), rearranging the terms and using energy	equality (\ref{4.39}), we get
	\begin{align}\label{4.49}
	&\E\left[\int_0^{T}e^{-r(t)}\langle 2F(v(t))+r'(t)v(t),v(t)-u_n(t)\rangle d
	t\right]\nonumber\\&\quad-\E\left[\int_0^{T}e^{-r(t)}\|\sigma_n(t,
	v(t))\|^2_{\mathcal{L}_{Q}}
	d
	t\right]+2\E\left[\int_0^{T}e^{-r(t)}\left(\sigma_n(t,
	v(t)),
	\sigma_n(t,u_n(t))\right)_{\mathcal{L}_{Q}}d
	t\right]\nonumber\\&\geq
	\E\left[\int_0^{T}e^{-r(t)}\langle 2F(u_n(t))+r'(t)u_n(t),v(t)\rangle d
	t\right]\nonumber\\&\quad-\E\left[\int_0^{T}e^{-r(t)}\langle 2F(u_n(t))+r'(t)u_n(t),u_n(t)\rangle d
	t\right]+\E\left[\int_0^{T}e^{-r(t)}\|
	\sigma_n(t,u_n(t))\|^2_{\mathcal{L}_{Q}}d
	t\right]\nonumber\\&=\E\left[\int_0^{T}e^{-r(t)}\langle 2F(u_n(t))+r'(t)u_n(t),v(t)\rangle d
	t\right]
	+\E\left[e^{-r(T)}\|u_n(T)\|_{\L^2}^2-\|u_n(0)\|_{\L^2}^2\right].
	\end{align}
	We use the weak convergence in	(\ref{4.43z}) 	and Lebesgue dominated convergence theorem to  deduce that
	\begin{align}\label{4.50}
	&\E\Bigg[\int_0^{T}e^{-r(t)}\left(2\left(\sigma_n(t, v(t)),
	\sigma_n(t,u_n(t))\right)_{\mathcal{L}_{Q}}-\|\sigma_n(t,
	v(t))\|^2_{\mathcal{L}_{Q}}\right)d
	t\Bigg]\nonumber\\& \to
	\E\left[\int_0^{T}e^{-r(t)}\left(2\left(\sigma(t, v),
	\Phi(t)\right)_{\mathcal{L}_{Q}}-\|\sigma(t,
	v(t))\|^2_{\mathcal{L}_{Q}}\right)d t\right],
	\end{align}
	as $n\to\infty$. On taking liminf on both sides of (\ref{4.49}),
	and using (\ref{4.50}), we obtain
	\begin{align}\label{4.51}
	&\E\left[\int_0^{T}e^{-r(t)}\langle 2F(v(t))+r'(t)v(t),v(t)-u(t)\rangle d
	t\right]\nonumber\\&\quad-\E\left[\int_0^{T}e^{-r(t)}\|\sigma(t,
	v(t))\|^2_{\mathcal{L}_{Q}} d
	t\right]+2\E\left[\int_0^{T}e^{-r(t)}\left(\sigma(t,
	v(t)), \Phi(t)\right)_{\mathcal{L}_{Q}}d
	t\right]\\&\geq\E\left[\int_0^{T}e^{-r(t)}\langle 2F_0(t)+r'(t)u(t),v(t)\rangle d
	t\right]
	+\liminf_{n\to\infty}\E\left[e^{-r(T)}\|u^n(T)\|_{\L^2}^2-\|u^n(0)\|_{\L^2}^2\right].\nonumber
	\end{align}
	Using the lower semicontinuity property of the $\L^2$-norm and
	(\ref{4.46}), the second term on the right hand side of the
	inequality (\ref{4.51}) satisfies the following inequality:
	\begin{align}\label{4.52}
	&\liminf_{n\to\infty}\E\left[e^{-r(T)}\|u^n(T)\|_{\L^2}^2-\|u^n(0)\|_{\L^2}^2\right]\geq
	\E\left[e^{-r(T)}\|u(T)\|^2_{\L^2}-\|u_0\|^2_{\L^2}\right].
	\end{align}
	Hence by using the energy equality (\ref{4.45}) and (\ref{4.52}) in
	(\ref{4.51}), we find
	\begin{align}\label{4.53}
	&\E\left[\int_0^{T}e^{-r(t)}\langle 2F(v(t))+r'(t)v(t),v(t)-u(t)\rangle d
	t\right]\nonumber\\&\geq\E\left[\int_0^{T}e^{-r(t)}\|\sigma(t,
	v(t))\|^2_{\mathcal{L}_{Q}} d
	t\right]-2\E\left[\int_0^{T}e^{-r(t)}\left(\sigma(t,
	v(t)), \Phi(t)\right)_{\mathcal{L}_{Q}}d
	t\right]\nonumber\\&\quad+\E\left[\int_0^{T}e^{-r(t)}\|\Phi(t)\|^2_{\mathcal{L}_{Q}}d t\right]
	+\E\left[\int_0^{T}e^{-r(t)}\langle 2F_0(t)+r'(t)u(t),v(t)-u(t)\rangle d
	t\right].
	\end{align}
	Thus, by rearranging the terms in (\ref{4.53}), we finally obtain
	\begin{align}\label{4.54}
	&\E\left[\int_0^{T}e^{-r(t)}\langle 2F(v(t))-2F_0(t)+r'(t)(v(t)-u(t)),v(t)-u(t)\rangle d
	t\right]\nonumber\\&\geq
	\E\Bigg[\int_0^{T}e^{-r(t)}\|\sigma(t,
	v(t))-\Phi(t)\|^2_{\mathcal{L}_{Q}}
	d t\Bigg]\geq 0.
	\end{align}
	The estimate (\ref{4.54}) holds true for any	$v\in\mathrm{L}^2(\Omega;\mathrm{L}^{2}(0,T;\H_m))$ and for any
	$m\in\mathbb{N}$, since the estimate (\ref{4.54}) is independent of
	$m$ and $n$. It can be easily seen by a density argument that the
	inequality (\ref{4.54}) remains true for any
	$$v\in\mathrm{L}^{2p}(\Omega;\mathrm{L}^{\infty}(0,T;\L^2(\G)))\cap\L^2(\Omega;\mathrm{L}^2(0,T;\H_0^1(\mathcal{O})))\cap\L^4(\Omega;\mathrm{L}^4(0,T;\L^4(\G)))=:\mathcal{J},$$ for $p>2$.
	Indeed, for any $v\in\mathcal{J}$,
	there exists a strongly convergent subsequence
	$v_m\in\mathcal{J}$ that satisfies the
	inequality (\ref{4.54}).
	Taking $v(\cdot)=u(\cdot)$ in (\ref{4.54}) immediately gives
	$\sigma(\cdot,v(\cdot))=\Phi(\cdot)$. Let us now take
	$v(\cdot)=u(\cdot)+\lambda w(\cdot)$, $\lambda>0$, where
	$w\in\mathrm{L}^{4}(\Omega;\mathrm{L}^{\infty}(0,T;\L^2(\G)))\cap\mathrm{L}^2(\Omega;\mathrm{L}^2(0,T;\H_0^1(\G))),$ and substitute for
	$v$ in (\ref{4.54}) to find
	\begin{align}\label{4.55}
	\E\left[\int_0^{T}e^{-r(t)}\langle 2F(u(t)+\lambda w(t))-2F_0(t)+r'(t)\lambda w(t),\lambda w(t)\rangle d
	t\right]\geq 0.
	\end{align}
	Let us divide the inequality (\ref{4.55}) by $\lambda$,  use the hemicontinuity property of
	$F(\cdot)$, and let $\lambda\to 0$ to obtain
	\begin{align}\label{4.56}
	\E\left[\int_0^{T}e^{-r(t)}\langle F(u(t))-F_0(t),w(t)\rangle d
	t\right]\geq 0.
	\end{align}
	The final term from (\ref{4.55}) tends to $0$ as $\lambda\to0$,
	since
	\begin{align}\label{4.57}
	&\E\left[\int_0^{T}e^{-r(t)}r'(t)\left(w(t),w(t)\right)d
	t\right]\nonumber\\&=\E\left[\int_0^{T}e^{-r(t)}\left\{\frac{\alpha^2}{\nu}\|v(t)\|_{\L^{\infty}}^2+2\beta(1+\gamma+\gamma^2)+L\right\}\|w(t)\|_{\L^2}^2d
	t\right]\nonumber\\&\leq \frac{C\alpha^2}{\nu}\E\left[\sup_{t\in[0,T]}\|w(t)\|_{\L^2}^2\int_0^T\|v(t)\|_{\H_0^1}^2dt\right]+[2\beta(1+\gamma+\gamma^2)+L]\E\left[\int_0^T\|w(t)\|_{\L^2}^2dt\right]\nonumber\\&\leq \frac{C\alpha^2}{\nu}\left\{\E\left[\sup_{t\in[0,T]}\|w(t)\|_{\L^2}^4\right]\right\}^{1/2}\left\{\E\left[\int_0^T\|v(t)\|_{\H_0^1}^2dt\right]^2\right\}^{1/2}\nonumber\\&\quad+[2\beta(1+\gamma+\gamma^2)+L]\E\left[\int_0^T\|w(t)\|_{\L^2}^2dt\right]<+\infty,
	\end{align}
	by using \eqref{3.26} and \eqref{energy2}. Thus from (\ref{4.56}), we have $\F(u(t))=\F_0(t)$ and hence $u(\cdot)$ is a strong solution of the
	system (\ref{abstract}) and
	$u\in\mathcal{J}$. It is clear that $u(\cdot)$ has a modification, whose $\mathscr{F}_t$-adapted paths  are continuous with trajectories in  $\C([0,T];\L^2(\G))$, $\mathbb{P}$-a.s. (see \cite{Me}).
	
	\vskip 0.2cm
	\noindent\textbf{Step (4):} \emph{Pathwise uniqueness.} Let $u_1(\cdot)$ and $u_2(\cdot)$ be two solutions of the system (\ref{abstract}). For $N>0$, let us define 
	\begin{align*}
	\tau_N^1=\inf_{0\leq t\leq T}\Big\{t:\|u_1(t)\|_{\L^2}\geq N\Big\},\ \tau_N^2=\inf_{0\leq t\leq T}\Big\{t:\|u_2(t)\|_{\L^2}\geq N\Big\}\text{ and }\tau_N=\tau_N^1\wedge\tau_N^2.
	\end{align*}
	One can show that $\tau_N\to T$ as $N\to\infty$, $\mathbb{P}$-a.s. Let us take $w(\cdot)=u_1(\cdot)-u_2(\cdot)$ and $\widetilde{\sigma}(\cdot)=\sigma(\cdot,u_1(\cdot))-\sigma(\cdot,u_2(\cdot))$. Then, $w(\cdot)$ satisfies the system 
	\begin{equation}
	\left\{
	\begin{aligned}
	dw(t)&=\left[-\nu Aw(t)-\alpha (B(u_1(t))-B(u_2(t)))+\beta(c(u_1(t))-c(u_2(t)))\right]d t\\&\quad+\widetilde{\sigma}(t)dW(t),\\
	w(0)&=w_0.
	\end{aligned}
	\right.
	\end{equation}
	We apply the infinite dimensional It\^o's formula (see \cite{IG}, Theorem 6.1, \cite{Me}) to the process $e^{-\rho(t)}\|w(t)\|_{\L^2}^2,$ where 
	\begin{align}\label{3pp51}\rho(t)=\frac{C\alpha^2}{\nu}\int_0^t\|u_2(s)\|_{\H_0^1}^2ds,\text{  so that }\ \rho'(t)= \frac{C\alpha^2}{\nu}\|u_2(t)\|_{\H_0^1}^2, \ \text{ a.e.},\end{align}
	to find 
	\begin{align}\label{4.59}
	&e^{-\rho(\s)}\|w(\s)\|_{\L^2}^2+2\nu\int_0^{\s}e^{-\rho(s)}\|w(s)\|_{\H_0^1}^2d s\nonumber\\&=\|w(0)\|_{\L^2}^2 -\int_0^{\t}\rho'(s)e^{-\rho(s)}\|w(s)\|_{\L^2}^2d s-2\alpha\int_0^{\s}e^{-\rho(s)}(u_2(s),w(s)\partial_xw(s))ds\nonumber\\&\quad +2\beta\int_0^{\s}e^{-\rho(s)}(c(u_1(s))-c(u_2(s)),u_1(s)-u_2(s))ds+\int_0^{\s}e^{-\rho(s)}\|\wi\sigma(s)\|_{\mathcal{L}_{Q}}^2d s\nonumber\\&\quad+2\int_0^{\s}e^{-\rho(s)}\left(\widetilde{\sigma}(s)dW(s),w(s)\right),
	\end{align}
where we used \eqref{2.10} and \eqref{2.11}. We estimate the terms $-2\alpha(u_2,w\partial_xw)$ using H\"older's, Gagliardo-Nirenberg interpolation and Young's inequalities as 
\begin{align}\label{3p51}
-2\alpha(u_2,w\partial_xw)&\leq 2\alpha\|u_2\|_{\L^{\infty}}\|w\|_{\L^2}\|\partial_xw\|_{\L^2}\leq\nu\|w\|_{\H_0^1}^2+\frac{C\alpha^2}{\nu}\|u_2\|_{\H_0^1}^2\|w\|_{\L^2}^2.
\end{align}
Using \eqref{3p51} and  \eqref{2.11} in  \eqref{4.59}, we get  
\begin{align}\label{4.60}
&e^{-\rho(\s)}\|w(\s)\|_{\L^2}^2+\nu\int_0^{\s}e^{-\rho(s)}\|w(s)\|_{\H_0^1}^2d s\nonumber\\&\leq \|w(0)\|_{\L^2}^2-\int_0^{\t}\rho'(s)e^{-\rho(s)}\|w(s)\|_{\L^2}^2d s +2\beta((1+\gamma+\gamma^2)\int_0^{\s}e^{-\rho(s)}\|w(s)\|_{\L^2}^2ds\nonumber\\&\quad+\frac{C\alpha^2}{\nu}\int_0^{\s}\|u_2(s)\|_{\H_0^1}^2\|w(s)\|_{\L^2}^2ds  \nonumber\\&\quad+\int_0^{\s}e^{-\rho(s)}\|\wi\sigma(s)\|_{\mathcal{L}_{Q}}^2d s+2\int_0^{\s}e^{-\rho(s)}\left(\widetilde{\sigma}(s)dW(s),w(s)\right).
\end{align}
	Note that the final term in the right hand side of the inequality (\ref{4.60}) is a local martingale. Let us take expectation in (\ref{4.60}), and  use the Hypothesis \ref{hyp} (H.2)  to get 
	\begin{align}\label{4.62}
	&\E\left[e^{-\rho(\s)}\|w(\s)\|_{\L^2}^2\right]\nonumber\\&\leq \E\left[\|w(0)\|_{\L^2}^2\right]+[2\beta(1+\gamma+\gamma^2)+L]\E\left[\int_0^{\s}e^{-\rho(s)}\|w(s)\|_{\L^2}^2d s\right].
	\end{align}
	We apply Gronwall's inequality in (\ref{4.62}) to obtain 
	\begin{align}\label{4.63}
	&\E\left[e^{-\rho(\s)}\|w(\s)\|_{\L^2}^2\right]\leq \E\left[\|w(0)\|_{\L^2}^2\right]e^{[2\beta(1+\gamma+\gamma^2)+L]T}.
	\end{align}
	Thus the initial data  $u_1(0)=u_2(0)=u_0$ leads to $w(\s)=0$, $\mathbb{P}$-a.s. But the fact that $\tau_N\to T$, $\mathbb{P}$-a.s., gives $w(t)=0$ and hence $u_1(t) = u_2(t)$, for all $t \in[0, T ]$, $\mathbb{P}$-a.s., and  the uniqueness follows.
\end{proof}

\begin{remark}[Regularity]
	Let us now assume $u_0\in\L^2(\Omega;\H_0^1(\Omega))\cap\L^{2p}(\Omega;\L^2(\mathcal{O})),$ for $p\geq 4$, and there exist a positive
	constant $\widetilde{K}$ such that, for all $u\in \H_0^1(\mathcal{O})$ and $t\in[0,T]$,
	\begin{equation*}
	\|A^{1/2}\sigma(t, u)\|^{2}_{\mathcal{L}_{Q}} 
	\leq \widetilde{K}\left(1 +\|u\|_{\H_0^1}^{2}\right).
	\end{equation*}
		Let us now apply finite dimensional It\^o's formula to the process $\|A^{1/2}u_n(\cdot)\|_{\L^2}^2$ to obtain
	\begin{align}\label{3.57}
	&\|A^{1/2}u_n(t)\|_{\L^2}^2+2\nu\int_0^t\|A u_n(s)\|_{\H_0^1}^2d s\nonumber\\&=\|A^{1/2}u_0\|_{\L^2}^2-2\alpha\int_0^t(B_n(u_n(s)),Au_n(s))ds+2\beta\int_0^t(c_n(u_n(s)),Au_n(s))ds\nonumber\\&\quad+\int_0^t\Tr(A^{1/2}\sigma(s,u_n(s)) Q(A^{1/2}\sigma(s,u_n(s)))^*) ds+2\int_0^t(\sigma(s,u_n(s)) d W(s),Au_n(s)).
	\end{align} 
	We estimate $-2\alpha(B(u_n),Au_n)$ and $2\beta(c(u_n),Au_n)$ using H\"older's, Gagliardo-Nirenberg and Young's inequalities as 
	\begin{align}\label{3.58}
	-2\alpha(B(u_n),Au_n)&\leq 2\alpha\|u_n\partial_xu_n\|_{\L^2}\|Au_n\|_{\L^2}\leq 2\alpha\|u_n\|_{\L^6}\|\partial_xu_n\|_{\L^3}\|Au_n\|_{\L^2}\nonumber\\&\leq 2C\alpha\|u_n\|_{\L^6}^{3/2}\|Au_n\|_{\L^2}^{3/2}\leq\frac{\nu}{2}\|Au_n\|_{\L^2}^2+\frac{27C\alpha^4}{2\nu^3}\|u_n\|_{\L^6}^6,\\
	2\beta(c(u_n),Au_n)&=2\beta(1+\gamma)(u_n^2,Au_n)-\beta\gamma(u_n,Au_n)-\beta(u_n^3,Au_n)\nonumber\\&\leq-2\beta\gamma\|\partial_xu_n\|_{\L^2}^2-2\beta\|u_n\partial_xu_n\|_{\L^2}^2+2\beta(1+\gamma)\|u_n\|_{\L^4}^2\|Au_n\|_{\L^2}\nonumber\\&\leq- 2\beta\gamma\|\partial_xu_n\|_{\L^2}^2-2\beta\|u_n\partial_xu_n\|_{\L^2}^2+\frac{\nu}{2}\|Au_n\|_{\L^2}^2+\frac{2\beta^2(1+\gamma)^2}{\nu}\|u_n\|_{\L^4}^4. \label{3p59}
	\end{align}
	Thus, using \eqref{3.58}-\eqref{3p59} in \eqref{3.57}, taking supremum over time from $0$ to $T$, and then taking expectation, we get 
	\begin{align}\label{3p60}
	&	\mathbb{E}\left[\sup_{t\in[0,T]}\|u_n(t)\|_{\H_0^1}^2+\nu\int_0^T\|A u_n(t)\|_{\L^2}^2d t+2\beta\gamma\int_0^T\|u_n(t)\|_{\H_0^1}^2dt+2\beta\int_0^T\|u_n(t)\partial_xu_n(t)\|_{\L^2}^2dt\right]\nonumber\\&\leq\E\left[\|u_0\|_{\H_0^1}^2\right]+\frac{27C\alpha^4}{2\nu^3}\E\left[\int_0^T\|u_n(t)\|_{\L^6}^6dt\right]+\frac{2\beta^2(1+\gamma)^2}{\nu}\E\left[\int_0^T\|u_n(t)\|_{\L^4}^4dt\right]\nonumber\\&\quad+\E\left[\int_0^T\|A^{1/2}\sigma(t,u_n(t))\|_{\mathcal{L}_Q}^2dt\right]+2\E\left[\sup_{t\in[0,T]}\left|\int_0^t(A^{1/2}\sigma(s,u_n(s)) d W(s),A^{1/2}u_n(s))\right|\right].
	\end{align}
	Let us take the final term from the right hand side of the inequality (\ref{3p60}) and use Burkholder-Davis-Gundy,  H\"{o}lder's and Young's inequalities to get
	\begin{align}\label{3p61}
	&2\E\left[\sup_{t\in[0,T]}\left|\int_0^{t}\left(A^{1/2}\sigma(s,u_n(s))dW(s),A^{1/2}u_n(s)\right)\right|\right]\nonumber\\&\leq 2\sqrt{3}\E\left[\int_0^{T}\|A^{1/2}\sigma(t,u_n(t))\|_{\mathcal{L}_{Q}}^2\|A^{1/2}u_n(t)\|_{\L^2}^2d t\right]^{1/2}\nonumber\\&\leq 2 \sqrt{3}\E\left[\sup_{t\in[0,T]}\|A^{1/2}u_n(t)\|_{\L^2}\left(\int_0^{T}\|A^{1/2}\sigma(t,u_n(t))\|_{\mathcal{L}_{Q}}^2d t\right)^{1/2}\right]\nonumber\\&\leq \frac{1}{2} \E\Bigg[\sup_{t\in[0,T]}\|u_n(t)\|_{\H_0^1}^2\Bigg]+6\E\Bigg[\int_0^{T}\|A^{1/2}\sigma(t,u_n(t))\|^2_{\mathcal{L}_{Q}}d
	t\Bigg].
	\end{align}
	Substituting \eqref{3p61} in \eqref{3p60}, we obtain 
	\begin{align}\label{3p62}
	&	\mathbb{E}\left[\sup_{t\in[0,T]}\|u_n(t)\|_{\H_0^1}^2+2\nu\int_0^T\|A u_n(t)\|_{\L^2}^2d t+4\beta\gamma\int_0^T\|u_n(t)\|_{\H_0^1}^2dt+4\beta\int_0^T\|u_n(t)\partial_xu_n(t)\|_{\L^2}^2dt\right]\nonumber\\& \leq 2\E\left[\|u_0\|_{\H_0^1}^2\right]+\frac{27C\alpha^4}{\nu^3}\E\left[\int_0^T\|u_n(t)\|_{\L^6}^6dt\right]+\frac{4\beta^2(1+\gamma)^2}{\nu}\E\left[\int_0^T\|u_n(t)\|_{\L^4}^4dt\right]\nonumber\\&\quad+14\widetilde{K}T+14\widetilde{K}\E\left[\int_0^T\|u_n(t)\|_{\H_0^1}^2dt\right].
	\end{align}
	Applying Gronwall's inequality in \eqref{3p62} gives 
	\begin{align}\label{3p63}
	\mathbb{E}\left[\sup_{t\in[0,T]}\|u_n(t)\|_{\H_0^1}^2\right]&\leq \bigg\{2\E\left[\|u_0\|_{\H_0^1}^2\right]+14\widetilde{K}T+\frac{4\beta^2(1+\gamma)^2}{\nu}\E\left[\int_0^T\|u_n(t)\|_{\L^4}^4dt\right]\nonumber\\&\qquad+\frac{27C\alpha^4}{\nu^3}\E\left[\int_0^T\|u_n(t)\|_{\L^6}^6dt\right]\bigg\}e^{14\widetilde{K}T}.
	\end{align}
	Using Gagliardo-Nirenberg and H\"older's inequalities, one can easily get 
	\begin{align*}
	\E\left[\int_0^T\|u_n(t)\|_{\L^6}^6dt\right]\leq& C\E\left[\sup_{t\in[0,T]}\|u_n(t)\|_{\L^2}^4\int_0^T\|\partial_xu_n(t)\|_{\L^2}^2dt\right]\nonumber\\&\leq C\left\{\E\left[\sup_{t\in[0,T]}\|u_n(t)\|_{\L^2}^8\right]\right\}^{1/2}\left\{\E\left[\left(\int_0^T\|u_n(t)\|_{\H_0^1}^2\right)^2\right]\right\}^{1/2}<+\infty,
	\end{align*}
	whenever $u_0\in\L^{2p}(\Omega;\L^2(\mathcal{O}))$, for $p\geq 4$. Combining \eqref{3p62}-\eqref{3p62}, we obtain 
	\begin{align}
		\mathbb{E}\left[\sup_{t\in[0,T]}\|u_n(t)\|_{\H_0^1}^2+2\nu\int_0^T\|A u_n(t)\|_{\L^2}^2d t\right]\leq C\left(\E\left[\|u_0\|_{\H_0^1}^2\right], \E\left[\|u_0\|_{\L^2}^{2p},\beta,\gamma,\nu,\alpha,K,\widetilde{K},T\right]\right),
	\end{align}
for $p\geq 4$. Thus, using the Banach-Alaoglu theorem, we can extract a subsequence such that 
\begin{equation}
\left\{
\begin{aligned}
	u_n&\xrightarrow{w^*}u\ \text{ in }\ \L^{2}(\Omega;\L^{\infty}(0,T;\H_0^1(\Omega))),\\
	u_n&\xrightarrow{w}u\ \text{ in }\ \L^2(\Omega;\L^2(0,T;\D(A))). 
\end{aligned}
\right.
\end{equation}
 Since $u(\cdot)$ is the unique strong solution of the system \eqref{abstract}, we obtain 	the regularity of $u_n(\cdot)$ as 
	$$u\in\L^2(\Omega;\L^{\infty}(0,T;\H_0^1(\mathcal{O}))\cap\L^2(0,T;\H^2(\mathcal{O}))),$$ and one can prove that $u$ has a continuous modification in $\C([0,T];\H_0^1(\mathcal{O}))\cap\L^2(0,T;\H^2(\mathcal{O}))$, $\mathbb{P}$-a.s.
\end{remark}

\section{The inviscid limit}\label{sec5}\setcounter{equation}{0}  In this section, we  discuss the inviscid limit of the equation \eqref{abstract} as $\beta\to 0$. Let $u(\cdot)$ be the unique strong solution of the system \eqref{abstract}. We consider the following stochastic Brugers' equation: 
\begin{equation}\label{48}
\left\{
\begin{aligned}
dv(t)&=[- Av(t)-\alpha B(v(t))]dt+\sigma(t,v(t))dW(t), \ t\in(0,T),\\
v(0)&=u_0\in\L^{2p}(\Omega;\L^{2}(\mathcal{O})). 
\end{aligned}
\right.
\end{equation}
The existence and uniqueness of strong solution of the above system  can be established in a similar way as in section \ref{sec3} (see \cite{GDP} also). For $u_0\in\L^{2p}(\Omega;\L^2(\mathcal{O}))$, $p>2$, the unique strong solution of the system \eqref{48} satisfies the energy inequality:
\begin{align}\label{4.9}
&\E\left[\sup_{t\in[0,T]}\|v(t)\|_{\L^2}^2+4\nu\int_0^{T}\|v(t)\|_{\H_0^1}^2d t\right]\leq
(2\|u_0\|_{\L^2}^2+14KT)e^{28KT}.
\end{align}
Also $u(\cdot)$ has the regularity $$u\in\L^{2p}(\Omega;\L^{\infty}(0,T;\L^2(\mathcal{O})))\cap\L^{2}(\Omega;\L^2(0,T;\H_0^1(\mathcal{O}))),$$ with a continuous modification having $u\in\C([0,T];\L^2(\mathcal{O}))\cap\L^2(0,T;\H_0^1(\mathcal{O})),$ $\mathbb{P}$-a.s. 
\begin{proposition}\label{prop5.1}
		Let $u(\cdot)$ be the unique strong solution of the stochastic Brugers-Huxley equation \eqref{abstract}. As $\beta\to 0$, the strong solution $v(\cdot)$ of the system \eqref{abstract} tends to the strong solution of the stochastic Brugers equation \eqref{48}. 
\end{proposition}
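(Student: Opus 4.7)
The plan is to control the difference $w(t) := u(t) - v(t)$, where $u$ denotes the strong solution of \eqref{abstract} and $v$ the strong solution of \eqref{48}, by adapting the pathwise-uniqueness argument of Theorem~\ref{exis} and tracking the extra contribution coming from $\beta c(u)$. Subtracting \eqref{48} from \eqref{abstract}, we obtain
\begin{align*}
dw(t) = \bigl[-\nu A w(t) - \alpha(B(u(t)) - B(v(t))) + \beta c(u(t))\bigr] dt + \bigl[\sigma(t,u(t)) - \sigma(t,v(t))\bigr] dW(t),
\end{align*}
with $w(0) = 0$. Applying the infinite-dimensional It\^o formula to $e^{-\rho(t)}\|w(t)\|_{\L^2}^2$, with the same exponential weight
$\rho(t) := \frac{C\alpha^2}{\nu}\int_0^t \|v(s)\|_{\H_0^1}^2\, ds + (L+1)t$
used in \eqref{3pp51}, the convection term is absorbed via the bound $2\alpha|\langle B(u)-B(v),w\rangle| \leq \nu\|w\|_{\H_0^1}^2 + \frac{C\alpha^2}{\nu}\|v\|_{\H_0^1}^2 \|w\|_{\L^2}^2$ together with the Lipschitz hypothesis (H.3) on $\sigma$, exactly as in Step~(4) of the proof of Theorem~\ref{exis}. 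The only new term is the $\beta$-contribution, which I split by Cauchy--Schwarz and Young's inequality as
\begin{equation*}
2\beta|(c(u),w)| \leq \beta^2 \|c(u)\|_{\L^2}^2 + \|w\|_{\L^2}^2.
\end{equation*}

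Choosing $\rho'(t)$ as above absorbs the $\frac{C\alpha^2}{\nu}\|v\|_{\H_0^1}^2$, $L$, and $+1$ terms, yielding
\begin{equation*}
e^{-\rho(t)}\|w(t)\|_{\L^2}^2 + \nu\int_0^t e^{-\rho(s)}\|w(s)\|_{\H_0^1}^2 ds \leq \beta^2\int_0^t e^{-\rho(s)}\|c(u(s))\|_{\L^2}^2 ds + M(t),
\end{equation*}
where $M$ denotes the local martingale $2\int_0^{\cdot} e^{-\rho(s)}(w(s),[\sigma(s,u)-\sigma(s,v)]dW(s))$. Localizing with stopping times (as in Proposition~\ref{prop1}), taking expectations, and applying the Burkholder--Davis--Gundy inequality (mirroring \eqref{4.18}) absorb $M$ into the left-hand side at the cost of another $L\mathbb{E}[\int_0^t e^{-\rho}\|w\|_{\L^2}^2 ds]$ term, which Gronwall's inequality then kills. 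What remains is
\begin{equation*}
\mathbb{E}\Bigl[\sup_{t\in[0,T]} e^{-\rho(t)}\|w(t)\|_{\L^2}^2 + \nu\int_0^T e^{-\rho(s)}\|w(s)\|_{\H_0^1}^2 ds\Bigr] \leq C\,\beta^2\, \mathbb{E}\Bigl[\int_0^T \|c(u(s))\|_{\L^2}^2 ds\Bigr].
\end{equation*}

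The main obstacle is verifying that $\mathbb{E}[\int_0^T\|c(u)\|_{\L^2}^2 ds]$ stays bounded \emph{uniformly in} $\beta\in(0,1]$. Since $\|c(u)\|_{\L^2}^2 \leq C(\|u\|_{\L^2}^2 + \|u\|_{\L^4}^4 + \|u\|_{\L^6}^6)$, the 1D interpolation $\|u\|_{\L^p}^p \leq C\|u\|_{\L^2}^{(p+2)/2}\|u\|_{\H_0^1}^{(p-2)/2}$ combined with Cauchy--Schwarz bounds this by quantities involving $\mathbb{E}[\sup_t \|u\|_{\L^2}^{8}]^{1/2}$ and $\mathbb{E}[(\int_0^T \|u\|_{\H_0^1}^2 ds)^2]^{1/2}$; both are controlled by Proposition~\ref{prop1} (applied with $p=4$) and \eqref{3.26}, and the bounds there depend on $\beta$ only through factors that remain finite as $\beta\to 0$. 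Hence the right-hand side is $O(\beta^2)$. To convert this weighted estimate into the genuine convergence $u\to v$, introduce the stopping time $\tau_R := \inf\{t\in[0,T] : \int_0^t \|v(s)\|_{\H_0^1}^2 ds \geq R\}$; on $\{t\leq \tau_R\}$ we have $e^{-\rho(t)} \geq e^{-C\alpha^2 R/\nu - (L+1)T}$, which delivers $\mathbb{E}[\sup_{t\leq \tau_R}\|w(t)\|_{\L^2}^2 + \nu\int_0^{\tau_R}\|w\|_{\H_0^1}^2 ds] \leq C_R \beta^2 \to 0$. Since $\tau_R\nearrow T$ almost surely as $R\to\infty$ (because $v\in\L^2(0,T;\H_0^1(\mathcal{O}))$ $\mathbb{P}$-a.s.), we conclude that $u\to v$ in probability in $\C([0,T];\L^2(\mathcal{O}))\cap \L^2(0,T;\H_0^1(\mathcal{O}))$ as $\beta\to 0$.
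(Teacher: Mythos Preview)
Your argument is correct and follows the same template as the paper's proof: apply It\^o's formula to $e^{-\rho(t)}\|w(t)\|_{\L^2}^2$, absorb the convection difference via the weight, and show that the residual $\beta$-source vanishes in the limit. The differences are cosmetic. You weight by $\int_0^t\|v(s)\|_{\H_0^1}^2\,ds$ (the Burgers solution, which is $\beta$-independent), whereas the paper weights by $\int_0^t\|u(s)\|_{\H_0^1}^2\,ds$; either works. You split $2\beta|(c(u),w)|\le \beta^2\|c(u)\|_{\L^2}^2+\|w\|_{\L^2}^2$ and then invoke the $\L^6$-moment bound from Proposition~\ref{prop1}, while the paper expands $(c(u),w)$ term by term to obtain source terms of order $\beta$ directly controlled by the $\L^2$/$\L^4$ estimates. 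Finally, your stopping-time argument to strip off the exponential weight and conclude convergence in probability in $\C([0,T];\L^2(\mathcal{O}))\cap\L^2(0,T;\H_0^1(\mathcal{O}))$ is more explicit than the paper's somewhat terse passage from $\E[e^{-\rho(t)}\|w(t)\|_{\L^2}^2]\to 0$ to ``$u(t)\to v(t)$, $\mathbb{P}$-a.s.''
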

\begin{proof}
Let us define $w=u-v$, then $w$ satisfies: 
\begin{equation}\label{49}
\left\{\begin{aligned}
dw(t)&=[- Aw(t)-\alpha (B(u(t))-B(v(t)))+\beta c(u(t))]dt\\&\quad +(\sigma(t,u(t))-\sigma(t,v(t)))dW(t), \ t\in(0,T),\\
w(0)&=0.
\end{aligned}\right.
\end{equation}
Applying infinite dimensional It\^o's formula to the process $e^{-\rho(\cdot)}\|w(\cdot)\|_{\L^2}$, we  find 
\begin{align}\label{51}
&e^{-\rho(t)}\|w(t)\|_{\L^2}^2+2\nu\int_0^te^{-\rho(s)}\|\partial_x w(s)\|_{\L^2}^2ds\nonumber\\&=\|w(0)\|_{\L^2}^2-\int_0^{t}\rho'(s)e^{-\rho(s)}\|w(s)\|_{\L^2}^2d s-2\alpha\int_0^te^{-\rho(s)}(B(u(s))-B(v(s)),w(s))ds \nonumber\\&\quad+2\beta\int_0^te^{-\rho(s)}(c(u(s)),w(s))ds+\int_0^{t}e^{-\rho(s)}\|\wi\sigma(s)\|_{\mathcal{L}_{Q}}^2d s+2\int_0^{t}e^{-\rho(s)}\left(\widetilde{\sigma}(s)dW(s),w(s)\right),
\end{align}
where $\widetilde{\sigma}(\cdot)=\sigma(\cdot,u(\cdot))-\sigma(\cdot,v(\cdot))$ and $\rho(\cdot)$ is defined in \eqref{3pp51}.
A calculation similar to \eqref{3p51} gives 
\begin{align}\label{4p5}
2\alpha|(B(u)-B(v),w)|\leq{\nu}\|w\|_{\H_0^1}^2+\frac{C\alpha^2}{\nu}\|u\|_{\H_0^1}^2\|w\|_{\L^2}^2.
\end{align}
Applying H\"older's and Young's inequalities, we estimate $2\beta(c(u),w)$ as 
\begin{align}\label{4p6}
2\beta|(c(u),w)|&\leq 2\beta|(1+\gamma)(u^2,w)-\gamma(u,w)-(u^3,w)|\nonumber\\&\leq 2\beta(1+\gamma)\|u\|_{\L^4}^2\|w\|_{\L^2}+\beta\gamma\|u\|_{\L^2}\|w\|_{\L^2}+\beta\|w\|_{\L^{\infty}}\|u\|_{\L^3}^3
\nonumber\\&\leq\frac{\beta}{2}\|w\|_{\L^2}^2+2\beta(1+\gamma)^2\|u\|_{\L^4}^4+\frac{\beta}{2}\|w\|_{\L^2}^2+\frac{\beta\gamma^2}{2}\|u\|_{\L^2}^2+\frac{\nu}{2}\|w\|_{\H_0^1}^2+\frac{C\beta^2}{2\nu}\|u\|_{\L^3}^6.
\end{align}
Combining  \eqref{4p5}  and \eqref{4p6}, substituting it in \eqref{51} and then taking expectation, the fact that the final term in the right hand side of the equality \eqref{51} is a martingale, we obtain 
\begin{align}\label{4p7}
&\E\left[e^{-\rho(t)}\|w(t)\|_{\L^2}^2+\nu\int_0^te^{-\rho(s)}\|\partial_x w(s)\|_{\L^2}^2ds\right]\nonumber\\&\leq  -\E\left[\int_0^{t}\rho'(s)e^{-\rho(s)}\|w(s)\|_{\L^2}^2d s\right]+\frac{C\alpha^2}{2\nu}\E\left[\int_0^te^{-\rho(s)}\|u(s)\|_{\H_0^1}^2\|w(s)\|_{\L^2}^2ds\right]\nonumber\\&\quad+\beta\E\left[\int_0^te^{-\rho(s)}\|w(s)\|_{\L^2}^2ds\right]+2\beta(1+\gamma)^2\E\left[\int_0^te^{-\rho(s)}\|u(s)\|_{\L^4}^4ds\right]\nonumber\\&\quad+\frac{\beta\gamma^2}{2}\mathbb{E}\left[\int_0^te^{-\rho(s)}\|u(s)\|_{\L^2}^2ds\right]+\frac{C\beta^2}{2\nu}\E\left[\int_0^te^{-\rho(s)}\|u(s)\|_{\L^2}^2\|u(s)\|_{\L^4}^4ds\right]\nonumber\\&\leq \beta\E\left[\int_0^te^{-\rho(s)}\|w(s)\|_{\L^2}^2ds\right]+2\beta(1+\gamma)^2\E\left[\int_0^T\|u(t)\|_{\L^4}^4dt\right]\nonumber\\&\quad+\frac{\beta\gamma^2}{2}\mathbb{E}\left[\int_0^T\|u(t)\|_{\L^2}^2dt\right]+\frac{C\beta^2}{2\nu}\left\{\E\left[\sup_{t\in[0,T]}\|u(t)\|_{\L^2}^4\right]\right\}^{1/2}\left\{\E\left[\int_0^T\|u(t)\|_{\L^4}^4dt\right]^2\right\}^{1/2},
\end{align}
where we used H\"older's inequality. Note that the final term from the right hand side of the inequality is bounded by using \eqref{energy2} and \eqref{3p27}. An application of Gronwall's inequality in \eqref{4p7} yields 
\begin{align}\label{4p8}
\E\left[e^{-\rho(t)}\|w(t)\|_{\L^2}^2\right]&\leq\beta \bigg\{2(1+\gamma)^2\E\left[\int_0^T\|u(t)\|_{\L^4}^4dt\right]+\frac{\gamma^2}{2}\mathbb{E}\left[\int_0^T\|u(t)\|_{\L^2}^2dt\right]\nonumber\\&\quad+\frac{C\beta}{2\nu}\left\{\E\left[\sup_{t\in[0,T]}\|u(t)\|_{\L^2}^4\right]\right\}^{1/2}\left\{\E\left[\int_0^T\|u(t)\|_{\L^4}^4dt\right]^2\right\}^{1/2}\bigg\}e^{\beta t},
\end{align} 
for all $t\in[0,T]$. Passing $\beta\to 0$ in \eqref{4p8}, we find $u(t)\to v(t)$, for all $t\in[0,T]$, $\mathbb{P}$-a.s.
\end{proof}

Let us now discuss the inviscid limit of the equation \eqref{abstract} as $\alpha\to 0$. We consider the following Huxley equation, for $(x,t)\in\Omega\times(0,T)$: 
\begin{equation}\label{514}
\left\{
\begin{aligned}
dz(t)&=[- \nu Az(t)+\beta c(z(t))]dt+\sigma(t,z(t))dW(t), \ t\in(0,T),\\
z(0)&=u_0\in\L^{2p}(\Omega;\L^{2}(\mathcal{O})). 
\end{aligned}\right.
\end{equation}
From \eqref{3.7}, for $F(\cdot)=\nu A+\beta c(\cdot)$,  it can be easily seen that 
	\begin{align}
&\langle F(u)-F(v),u-v\rangle +\beta(1+\gamma+\gamma^2)\|u-v\|_{\L^2}^2\geq 0,
\end{align}
and hence the operator $F+\lambda I$ is monotone, where $\lambda=\beta(1+\gamma+\gamma^2)$. Since the operator  $F+\lambda I:\H_0^1(\mathcal{O})\to\H^{-1}(\mathcal{O})$ is monotone and hemicontinuous, using Theorem Theorem 1.3, Chapter 2,\cite{VB}, the operator $F+\lambda I$ is a maximal monotone operator., Moreover, Corollary 1.2, Chapter 2,\cite{VB} gives us that $R(F+\lambda I)=\H^{-1}(\mathcal{O})$. 
The existence and uniqueness of strong solution $$z\in\L^{2p}(\Omega;\L^{\infty}(0,T;\L^2(\Omega)))\cap\L^{2}(\Omega;\L^2(0,T;\H_0^1(\Omega)))\cap\L^4(\Omega;\L^4(0,T;\L^4(\mathcal{O})))$$ to the system \eqref{514} can be proved in a similar way as in Theorem \ref{exis}. Moreover, $z(\cdot)$ satisfies:
\begin{align}\label{5.15} 
	&\E\left[\sup_{t\in[0,T]}\|z(t)\|_{\L^2}^2+4\nu\int_0^{T}\|z(t)\|_{\H_0^1}^2d t+2\beta\int_0^{T}\|z(t)\|_{\L^4}^4dt\right]\nonumber\\&\quad \leq
(2\|u_0\|_{\L^2}^2+14KT)e^{4(\beta(1+\gamma^2)+7K)T}.
\end{align}
Then, we have the following result: 
\begin{proposition}\label{prop4.2}
	Let $u(\cdot)$ be the unique strong solution of the stochastic Brugers-Huxley equation \eqref{abstract}. As $\alpha\to 0$, the strong solution $v(\cdot)$ of the system \eqref{abstract} tends to the strong solution of the stochastic Huxley equation \eqref{514}. 
\end{proposition}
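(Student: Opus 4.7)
The approach follows the structure of Proposition \ref{prop5.1}. Let $u(\cdot)$ be the strong solution of \eqref{abstract} and $z(\cdot)$ be the strong solution of \eqref{514}, and set $w(t) := u(t) - z(t)$ and $\widetilde{\sigma}(t) := \sigma(t,u(t)) - \sigma(t,z(t))$. Subtracting the two equations, $w$ satisfies
\begin{equation*}
dw(t) = \bigl[-\nu A w(t) - \alpha B(u(t)) + \beta(c(u(t)) - c(z(t)))\bigr]\,dt + \widetilde{\sigma}(t)\,dW(t), \quad w(0) = 0.
\end{equation*}
The plan is to apply the infinite-dimensional It\^o formula to $\|w(\cdot)\|_{\L^2}^2$, control each term, and conclude via Gronwall's inequality. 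Since there is no $\alpha$-dependent coefficient that blows up in the energy balance, a plain $\|w(t)\|_{\L^2}^2$ (without the exponential weight $e^{-\rho(t)}$ used in Proposition \ref{prop5.1}) already suffices.

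The reaction contribution is handled exactly by the estimate \eqref{2.11}, which gives $\beta(c(u)-c(z),w) \leq \beta(1+\gamma+\gamma^2)\|w\|_{\L^2}^2$, while the noise term is dealt with through Hypothesis \ref{hyp} (H.3), producing an extra $L\|w\|_{\L^2}^2$. The crucial new term, which has no counterpart on the $z$-side, is the uncompensated Burgers piece $-2\alpha(B(u),w)$. Integration by parts together with the Dirichlet boundary conditions gives $(B(u),w)=(u\partial_x u,w) = -\tfrac{1}{2}(u^2,\partial_x w)$, whence by H\"older's and Young's inequalities
\begin{equation*}
|2\alpha(B(u),w)| \leq \alpha\|u\|_{\L^4}^2\|\partial_x w\|_{\L^2} \leq \frac{\nu}{2}\|w\|_{\H_0^1}^2 + \frac{\alpha^2}{2\nu}\|u\|_{\L^4}^4.
\end{equation*}
This is the only location where $\alpha$ enters, and it enters as $\alpha^2$, which is precisely what will drive the limit.

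Taking expectations (the stochastic integral is a martingale by a standard stopping argument), absorbing the $\tfrac{\nu}{2}\|w\|_{\H_0^1}^2$ term into the dissipative contribution on the left, I expect to arrive at
\begin{equation*}
\E\|w(t)\|_{\L^2}^2 + \nu\int_0^t \E\|w(s)\|_{\H_0^1}^2\,ds \leq \frac{\alpha^2}{2\nu}\,\E\!\int_0^T\|u(s)\|_{\L^4}^4\,ds + \bigl[2\beta(1+\gamma+\gamma^2)+L\bigr]\!\int_0^t\!\E\|w(s)\|_{\L^2}^2\,ds,
\end{equation*}
and Gronwall's inequality then yields
\begin{equation*}
\sup_{t\in[0,T]}\E\|w(t)\|_{\L^2}^2 \leq \frac{\alpha^2}{2\nu}\,\E\!\left[\int_0^T\|u(s)\|_{\L^4}^4\,ds\right]\exp\bigl\{[2\beta(1+\gamma+\gamma^2)+L]T\bigr\}.
\end{equation*}

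The one point that requires care is the $\alpha$-independence of the bound on $\E\int_0^T\|u(s)\|_{\L^4}^4\,ds$. Inspecting the right-hand side of the energy estimate \eqref{energy1}, namely $(2\|u_0\|_{\L^2}^2+14KT)e^{4(\beta(1+\gamma^2)+7K)T}$, I see that it does not involve $\alpha$, so the fourth-moment bound is uniform in $\alpha$. Consequently the right-hand side above is $O(\alpha^2)$ as $\alpha\to 0$, giving $u\to z$ in $\L^2(\Omega;\C([0,T];\L^2(\mathcal{O})))$ and, by the dissipative term on the left, also in $\L^2(\Omega;\L^2(0,T;\H_0^1(\mathcal{O})))$. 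I do not anticipate a serious obstacle; the main bookkeeping issue is checking that all quantitative estimates inherited from Proposition \ref{prop1} are $\alpha$-free, which is explicit in the statement.
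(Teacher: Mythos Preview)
Your proof is correct and in fact a bit cleaner than the paper's, but the route differs in one essential place: the treatment of the uncompensated Burgers term $-2\alpha(B(u),w)$. The paper does \emph{not} drop the exponential weight; it applies It\^o's formula to $e^{-\widehat\rho(t)}\|w(t)\|_{\L^2}^2$ with $\widehat\rho(t)=C\alpha\int_0^t\|u(s)\|_{\H_0^1}^2\,ds$, estimates $|2\alpha(B(u),w)|\leq 2\alpha\|u\|_{\L^\infty}\|\partial_x u\|_{\L^2}\|w\|_{\L^2}\leq C\alpha\|u\|_{\H_0^1}^2(1+\|w\|_{\L^2}^2)$, and lets the weight absorb the $\|w\|_{\L^2}^2$-part while the leftover $C\alpha\int_0^T\|u\|_{\H_0^1}^2$ drives the limit at rate $O(\alpha)$. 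You instead integrate by parts first, $(B(u),w)=-\tfrac12(u^2,\partial_x w)$, and use Young's inequality to absorb $\partial_x w$ into the dissipation, producing the forcing term $\tfrac{\alpha^2}{2\nu}\|u\|_{\L^4}^4$. This is why no weight is needed in your argument and why you get the sharper rate $O(\alpha^2)$; the $\alpha$-uniform bound on $\E\int_0^T\|u\|_{\L^4}^4$ that you invoke from \eqref{energy1} is legitimate, since that estimate is indeed $\alpha$-free. One small overclaim: from $\sup_{t\in[0,T]}\E\|w(t)\|_{\L^2}^2\to 0$ you get convergence in $\L^\infty(0,T;\L^2(\Omega;\L^2(\mathcal{O})))$, not in $\L^2(\Omega;\C([0,T];\L^2(\mathcal{O})))$, since $\sup_t\E$ and $\E\sup_t$ are not interchangeable; the paper's conclusion is stated at the same level of precision.
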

\begin{proof}
	Let us define $w=u-z$. Then $w$ satisfies: 
	\begin{equation}\label{515}
	\left\{\begin{aligned}
dw(t)&=[- Aw(t)+\beta (c(u(t))-c(z(t)))-\alpha B(u(t))]dt\\&\quad +[\sigma(t,u(t))-\sigma(t,z(t))]dW(t), \ t\in(0,T),\\
z(0)&=0.
	\end{aligned}\right.
	\end{equation}
Applying infinite dimensional It\^o's formula to the process $\|w(\cdot)\|_{\L^2}^2$, we find 
	\begin{align}\label{516}
	&e^{-\widehat{\rho}(t)}\|w(t)\|_{\L^2}^2+2\nu\int_0^te^{-\widehat{\rho}(s)}\|\partial_x w(s)\|_{\L^2}^2ds\nonumber\\&=\|w(0)\|_{\L^2}^2-\int_0^{t}\widehat{\rho}'(s)e^{-\widehat{\rho}(s)}\|w(s)\|_{\L^2}^2d s+2\beta\int_0^te^{-\widehat{\rho}(s)}(c(u(s))-c(z(s)),w(s))ds\nonumber\\&\quad-2\alpha\int_0^te^{-\widehat{\rho}(s)}(B(u(s)),w(s))ds
+\int_0^{t}e^{-\widehat{\rho}(s)}\|\widehat\sigma(s)\|_{\mathcal{L}_{Q}}^2d s+2\int_0^{t}e^{-\widehat{\rho}(s)}\left(\widehat{\sigma}(s)dW(s),w(s)\right),
\end{align}
where $\widehat{\sigma}(\cdot)=\sigma(\cdot,u(\cdot))-\sigma(\cdot,z(\cdot))$ and 
\begin{align*}
\widehat{\rho}(t)=C\alpha \int_0^t\|u(s)\|_{\H_0^1}^2ds, \ \text{
so that }\ 
\widehat{\rho}'(t)=C\alpha\|u(t)\|_{\H_0^1}^2, \ \text{ a.e.}
\end{align*}
	The above equality implies 
	\begin{align}\label{517}
	&e^{-\widehat{\rho}(t)}\|w(t)\|_{\L^2}^2+2\nu\int_0^te^{-\widehat{\rho}(s)}\| w(s)\|_{\H_0^1}^2ds\nonumber\\&=\|w(0)\|_{\L^2}^2-\int_0^{t}\widehat{\rho}'(s)e^{-\widehat{\rho}(s)}\|w(s)\|_{\L^2}^2d s-2\alpha\int_0^te^{-\widehat{\rho}(s)}(u(s)\partial_xw(s),w(s))ds \nonumber\\&\quad+2\beta(1+\gamma+\gamma^2)\int_0^te^{-\widehat{\rho}(s)}\|w(s)\|_{\L^2}^2ds
+\int_0^{t}e^{-\widehat{\rho}(s)}\|\widehat\sigma(s)\|_{\mathcal{L}_{Q}}^2d s\nonumber\\&\quad+2\int_0^{t}e^{-\widehat{\rho}(s)}\left(\widehat{\sigma}(s)dW(s),w(s)\right).
	\end{align}
	We estimate the term $-2\alpha(u\partial_xw,w)$ using H\"older's and Young's inequalities as 
	\begin{align}\label{518}
-2\alpha(u\partial_xw,w)&\leq 2\alpha\|u\|_{\L^{\infty}}\|\partial_xu\|_{\L^2}\|w\|_{\L^2}\leq {C\alpha}\|u\|_{\H_0^1}^2(1+\|w\|_{\L^2}^2).
	\end{align}   
	Using  \eqref{518} in \eqref{517} and then taking expectation, we obtain  
	\begin{align}\label{522}
		&\E\bigg[e^{-\widehat{\rho}(t)}\|w(t)\|_{\L^2}^2+2\nu\int_0^te^{-\widehat{\rho}(s)}\| w(s)\|_{\H_0^1}^2ds\bigg]\nonumber\\&\leq C\alpha\E\left[\int_0^{T}\|u(t)\|_{\H_0^1}^2dt\right]+ 2\beta(1+\gamma+\gamma^2)\E\left[\int_0^{t}e^{-\widehat{\rho}(s)}\|w(s)\|_{\L^2}^2ds\right]\nonumber\\&\quad+\E\left[\int_0^{t}e^{-\widehat{\rho}(s)}\|\widehat\sigma(s)\|_{\mathcal{L}_{Q}}^2d s\right]\nonumber\\&\leq C\alpha\E\left[\int_0^{T}\|u(t)\|_{\H_0^1}^2dt\right]+[2\beta(1+\gamma+\gamma^2)+L]\E\left[\int_0^{t}e^{-\widehat{\rho}(s)}\|w(s)\|_{\L^2}^2ds\right],
	\end{align}
where we used Hypothesis \ref{hyp} (H.3). 	An application of Gronwall's inequality in \eqref{522} gives 
	\begin{align}\label{524}
	&\E\left[e^{-\widehat{\rho}(t)}\|w(t)\|_{\L^2}^2\right]\leq C\alpha\E\left[\int_0^{T}\|u(t)\|_{\H_0^1}^2dt\right]e^{[2\beta(1+\gamma+\gamma^2)+L]T}.
	\end{align}
Passing $\alpha\to 0$ in \eqref{524}, one can easily obtain that $u(t)\to z(t)$, for all $t\in[0,T]$, $\mathbb{P}$-a.s.
\end{proof}

\section{Large Deviations Principle and Exit Time Estimates}\label{sec6}\setcounter{equation}{0} 
In this section, we examine the small noise asymptotic granted  by large deviations theory and use it to estimate the exit time estimates. We take the initial data $u_0\in\L^2(\mathcal{O})$ as deterministic.  Let us first provide some basics definitions of  large deviations theory.
\subsection{Preliminaries}
Let us denote by $\mathscr{E}$, a complete separable metric space  (Polish space)  with the Borel $\sigma$-field $\mathscr{B}(\mathscr{E})$.

\begin{definition}
	A function $\mathrm{I} : \mathscr{E}\rightarrow [0, \infty]$ is called a \emph{rate
		function} if $\I$ is lower semicontinuous. A rate function $\I$ is
	called a \emph{good rate function} if for arbitrary $M \in [0,
	\infty)$, the level set $K_M = \big\{x\in\mathscr{E}: \I(x)\leq M\big\}$ is compact in $\mathscr{E}$.
\end{definition}
\begin{definition}[Large deviation principle]\label{LDP}\label{def4.2} Let $\I$ be a rate function on $\mathscr{E}$. A family $\big\{\mathrm{X}^{\varepsilon}: \varepsilon
	> 0\big\}$ of $\mathscr{E}$-valued random elements is said to satisfy \emph{the large deviation principle} on $\mathscr{E}$ with rate function $\I$, if the following two conditions hold:
	\begin{enumerate}
		\item[(i)] (Large deviation upper bound) For each closed set $\F\subset \mathscr{E}$:
		$$ \limsup_{\varepsilon\rightarrow 0} \varepsilon\log \mathbb{P}\left(\mathrm{X}^{\e}\in\F\right) \leq -\inf_{x\in \F} \I(x),$$
		\item[(ii)] (Large deviation lower bound) for each open set $\mathrm{G}\subset \mathscr{E}$:
		$$ \liminf_{\varepsilon\rightarrow 0}\varepsilon \log \mathbb{P}(\mathrm{X}^{\e}\in\mathrm{G}) \geq -\inf_{x\in \mathrm{G}} \I(x).$$
	\end{enumerate}
\end{definition}

\begin{definition}
	Let $\I$ be a rate function on $\mathscr{E}$. A family $\big\{\mathrm{X}^{\e} :\e > 0\big\}$ of $\mathscr{E}$-valued random elements is said to satisfy the \emph{Laplace principle} on $\mathscr{E}$ with rate function $\I$ if for each real-valued, bounded and continuous function $h$ defined on $\mathscr{E}$, i.e., for $h\in\C_b(\mathscr{E})$,
	\begin{equation}\label{LP}
	\lim_{\varepsilon \rightarrow 0} {\varepsilon }\log
	\mathbb{E}\left\{\exp\left[-
	\frac{1}{\varepsilon}h(\mathrm{X}^{\varepsilon})\right]\right\} = -\inf_{x
		\in \mathscr{E}} \big\{h(x) + \I(x)\big\}. 
	\end{equation}
\end{definition}

\begin{lemma}[Varadhan's Lemma \cite{Va}]\label{VL}
	Let $\mathscr{E}$ be a Polish space and $\{\mathrm{X}^{\varepsilon}: \varepsilon > 0\}$
	be a family of $\mathscr{E}$-valued random elements satisfying LDP with rate
	function $\I$. Then $\{\mathrm{X}^{\varepsilon}: \varepsilon > 0\}$ satisfies
	the Laplace principle on $\mathscr{E}$ with the same rate function $\I$.
\end{lemma}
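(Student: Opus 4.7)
The plan is to establish the Laplace principle \eqref{LP} by proving the matching upper and lower bounds for $\varepsilon\log\E[e^{-h(\mathrm{X}^\varepsilon)/\varepsilon}]$ separately, using the assumed LDP for $\{\mathrm{X}^\varepsilon\}$.

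For the lower bound I would fix an arbitrary $x_0 \in \mathscr{E}$ and $\eta > 0$, and use the continuity of $h$ to select an open neighbourhood $G$ of $x_0$ on which $h(y) < h(x_0) + \eta$. The trivial pointwise estimate
\begin{equation*}
\E\!\left[\exp\!\left(-\frac{h(\mathrm{X}^\varepsilon)}{\varepsilon}\right)\right] \geq e^{-(h(x_0)+\eta)/\varepsilon}\,\P(\mathrm{X}^\varepsilon \in G),
\end{equation*}
together with the LDP lower bound of Definition \ref{def4.2}(ii) applied to $G$, yields
\begin{equation*}
\liminf_{\varepsilon \to 0} \varepsilon \log \E\bigl[e^{-h(\mathrm{X}^\varepsilon)/\varepsilon}\bigr] \geq -h(x_0) - \eta - \inf_{y \in G} \mathrm{I}(y) \geq -h(x_0) - \eta - \mathrm{I}(x_0).
\end{equation*}
Sending $\eta \to 0$ and then taking the supremum over $x_0 \in \mathscr{E}$ produces $\liminf_{\varepsilon\to 0}\varepsilon\log\E[e^{-h(\mathrm{X}^\varepsilon)/\varepsilon}] \geq -\inf_{x \in \mathscr{E}}[h(x)+\mathrm{I}(x)]$.

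For the upper bound, since $h \in \C_b(\mathscr{E})$ is bounded with $\|h\|_\infty =: M < \infty$, I would fix $\delta > 0$ and partition $[-M,M]$ with mesh at most $\delta$ by points $-M = t_0 < t_1 < \cdots < t_N = M$, setting $F_i := \{x \in \mathscr{E}: t_{i-1} \leq h(x) \leq t_i\}$, which are closed by continuity of $h$. The decomposition
\begin{equation*}
\E\bigl[e^{-h(\mathrm{X}^\varepsilon)/\varepsilon}\bigr] = \sum_{i=1}^N \E\bigl[e^{-h(\mathrm{X}^\varepsilon)/\varepsilon}\mathbf{1}_{\{\mathrm{X}^\varepsilon \in F_i\}}\bigr] \leq \sum_{i=1}^N e^{-t_{i-1}/\varepsilon}\,\P(\mathrm{X}^\varepsilon \in F_i),
\end{equation*}
together with the elementary ``principle of the largest term'' $\varepsilon \log \sum_{i=1}^N a_i^{(\varepsilon)} \leq \varepsilon \log N + \max_i \varepsilon \log a_i^{(\varepsilon)}$ (the term $\varepsilon\log N$ vanishes as $\varepsilon\to 0$), and the LDP upper bound of Definition \ref{def4.2}(i) applied to each closed set $F_i$, produces
\begin{equation*}
\limsup_{\varepsilon \to 0} \varepsilon \log \E\bigl[e^{-h(\mathrm{X}^\varepsilon)/\varepsilon}\bigr] \leq \max_{1 \leq i \leq N}\Bigl\{-t_{i-1} - \inf_{x \in F_i} \mathrm{I}(x)\Bigr\} \leq -\inf_{x \in \mathscr{E}}[h(x)+\mathrm{I}(x)] + \delta,
\end{equation*}
where in the last step I would use that $h(x) \geq t_{i-1} \geq t_i - \delta$ on $F_i$, whence $t_{i-1}+\inf_{F_i}\mathrm{I} \geq \inf_{F_i}(h+\mathrm{I})-\delta \geq \inf_{\mathscr{E}}(h+\mathrm{I})-\delta$. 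Letting $\delta \to 0$ closes the gap with the lower bound, proving \eqref{LP}.

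The principal obstacle is purely bookkeeping: passing $\varepsilon\log$ through a finite sum via the principle of the largest term, and choosing the partition fine enough that the oscillation of $h$ on each $F_i$ is negligible. Because $h \in \C_b(\mathscr{E})$ is uniformly bounded, this truncation/partition step is painless and no \emph{goodness} or tightness assumption on $\mathrm{I}$ is needed beyond the bare LDP in Definition \ref{def4.2}; this is the feature that distinguishes the bounded-continuous case from the more delicate unbounded versions of Varadhan's integral lemma.
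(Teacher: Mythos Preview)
The paper does not prove this lemma; it is quoted as a classical result with a citation to \cite{Va}, so there is no ``paper's own proof'' to compare against. Your argument is the standard proof of Varadhan's integral lemma and is essentially correct.

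There is one small slip in the bookkeeping for the upper bound. In the final step you write that on $F_i$ one has $h(x)\geq t_{i-1}$ and deduce $t_{i-1}+\inf_{F_i}\mathrm{I}\geq \inf_{F_i}(h+\mathrm{I})-\delta$. But $h(x)\geq t_{i-1}$ gives the inequality in the wrong direction: it yields $t_{i-1}+\inf_{F_i}\mathrm{I}\leq \inf_{F_i}(h+\mathrm{I})$. What you actually need is the \emph{other} bound defining $F_i$, namely $h(x)\leq t_i\leq t_{i-1}+\delta$, so that $t_{i-1}\geq h(x)-\delta$ and hence $t_{i-1}+\mathrm{I}(x)\geq h(x)+\mathrm{I}(x)-\delta$ for every $x\in F_i$; taking the infimum over $F_i$ then gives the desired $t_{i-1}+\inf_{F_i}\mathrm{I}\geq \inf_{F_i}(h+\mathrm{I})-\delta\geq \inf_{\mathscr{E}}(h+\mathrm{I})-\delta$. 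With this correction the argument goes through exactly as you intended.
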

\begin{lemma}[Bryc's Lemma \cite{DZ}]\label{BL}
	The Laplace principle implies the LDP with the same rate function.
\end{lemma}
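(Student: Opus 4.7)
The plan is to deduce the two halves of the LDP (upper bound on closed sets, lower bound on open sets) directly from the Laplace principle \eqref{LP} by testing it against carefully chosen bounded continuous functions that approximate indicators of closed and open sets, following the now-classical argument of Bryc.

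\textbf{Upper bound on closed sets.} Given a closed $F\subset\mathscr{E}$, for each $M>0$ the function $h_M(x):=M\bigl(1\wedge d(x,F)\bigr)$ is bounded continuous, vanishes on $F$, and equals $M$ on $\{d(\cdot,F)\geq 1\}$. Since $h_M\equiv 0$ on $F$, the pointwise estimate $\mathbb{P}(X^\varepsilon\in F)\leq\mathbb{E}[\exp(-h_M(X^\varepsilon)/\varepsilon)]$ together with \eqref{LP} yields
\[ \limsup_{\varepsilon\to 0}\varepsilon\log\mathbb{P}(X^\varepsilon\in F)\leq -\inf_{x\in\mathscr{E}}\{h_M(x)+I(x)\}. \]
I then show $\inf_x\{h_M+I\}\nearrow\inf_F I$ as $M\to\infty$: the inequality $\leq$ is obvious (test $x\in F$); for the reverse, if a near-minimizer $x_M\notin F$ satisfied $h_M(x_M)+I(x_M)\leq\inf_F I-\eta$, then $I(x_M)$ would be uniformly bounded, goodness of $I$ would yield a subsequential limit $x^\ast$, the bound $h_M(x_M)\leq\inf_F I$ would force $d(x_M,F)=O(1/M)\to 0$ so $x^\ast\in F$, and lower semicontinuity of $I$ would give the contradiction $\inf_F I\leq I(x^\ast)\leq\inf_F I-\eta$.

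\textbf{Lower bound on open sets.} Fix $x_0\in G$ with $I(x_0)<\infty$ (else the bound is vacuous) and pick $r>0$ with $\bar B(x_0,r)\subset G$. For $M>0$ take
\[ h_M(x):=M\min\bigl(1,\,2d(x,\bar B(x_0,r/2))/r\bigr), \]
which is bounded continuous, equals $0$ on $\bar B(x_0,r/2)$ (in particular at $x_0$), and equals $M$ on $G^c$. Splitting the integral over $G$ and $G^c$ yields $\mathbb{E}\bigl[\exp(-h_M(X^\varepsilon)/\varepsilon)\bigr]\leq\mathbb{P}(X^\varepsilon\in G)+\exp(-M/\varepsilon)$; taking $\varepsilon\log$ and $\liminf_{\varepsilon\to 0}$, combined with \eqref{LP} and the obvious estimate $\inf_x\{h_M+I\}\leq h_M(x_0)+I(x_0)=I(x_0)$, gives
\[ -I(x_0)\leq\max\bigl\{\liminf_{\varepsilon\to 0}\varepsilon\log\mathbb{P}(X^\varepsilon\in G),\ -M\bigr\}. \]
Choosing $M>I(x_0)$ then produces $\liminf_{\varepsilon\to 0}\varepsilon\log\mathbb{P}(X^\varepsilon\in G)\geq -I(x_0)$, and taking the supremum over $x_0\in G$ delivers the LDP lower bound.

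The only delicate step is the passage $M\to\infty$ in the upper bound, which genuinely exploits both the lower semicontinuity and the compactness of the sublevel sets of $I$; this is the unique place where the good rate function hypothesis enters, and dropping either property would at best produce a weak LDP. Everything else is a clean sandwich between the Laplace identity and two families of test functions tailored to approximate the indicators of closed and open sets, respectively.
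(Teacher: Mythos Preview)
The paper does not actually prove this lemma; it is quoted as a background result from Dembo--Zeitouni \cite{DZ}, so there is no ``paper's own proof'' to compare against. Your argument is the classical Bryc construction and is correct.

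Two minor remarks. First, you rightly flag that the $M\to\infty$ passage in the upper bound uses goodness of $\mathrm{I}$ (compact sublevel sets). The lemma as stated in the paper omits the word ``good'', but the version you prove is the one that is actually true and is the one in \cite{DZ}; without goodness or exponential tightness the Laplace principle only yields a weak LDP, exactly as you say. Second, in the lower-bound half the step ``taking $\varepsilon\log$ and $\liminf$'' hides a small subsequence argument: one selects a subsequence realizing $\liminf_{\varepsilon\to 0}\varepsilon\log\mathbb{P}(X^\varepsilon\in G)$, uses that the Laplace limit in \eqref{LP} exists along \emph{every} subsequence, and then passes to the limit in the elementary bound $\varepsilon\log(a+b)\leq\varepsilon\log 2+\max(\varepsilon\log a,\varepsilon\log b)$. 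Once that is spelled out, the displayed inequality and the conclusion follow cleanly.
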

Note that, Varadhan's Lemma together with Bryc's converse of
Varadhan's Lemma state that for Polish space valued random elements,
the Laplace principle and the large deviation principle are
equivalent. The LDP for the 2D stochastic Navier-Stokes equations is established in \cite{SSSP}.

Next Theorem shows that the LDP is preserved under continuous mappings, and is known as \emph{contraction principle}. 

\begin{theorem}[Contraction principle, Theorem 4.2.1, \cite{DZ}]\label{thm4.6}
	Let $\mathscr{E}$ and $\mathscr{G}$ be Hausdorff
	topological spaces and $f : \mathscr{E}\to \mathscr{G}$ a continuous function. Let us consider a good rate function $I : \mathscr{E}\to  [0,\infty]$. 
	\begin{enumerate}
		\item [(a)] For each $y \in\mathscr{E}'$, define
		\begin{align}
		\mathrm{J}(y):=\inf\left\{\mathrm{I}(x):x\in\mathscr{E},y=f(x)\right\}.
		\end{align}
		\item [(b)] If $\mathrm{I}$ controls the LDP associated with a family of probability measures
		$\{\mu_{\e}\}$ on $\mathscr{E}$, then $\mathrm{I}$ controls the LDP associated with the family of probability measures $\{\mu_{\e}\circ f^{-1}\}$ on $\mathscr{G}$. 
	\end{enumerate}
\end{theorem}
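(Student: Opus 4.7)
My plan is to establish the contraction principle in three steps: (i) verify that $\mathrm{J}:\mathscr{G}\to[0,\infty]$ defined in (a) is itself a good rate function, (ii) prove the large deviation upper bound for $\{\mu_{\e}\circ f^{-1}\}$, and (iii) prove the matching lower bound. Throughout, the essential ingredient is the continuity of $f$, which transfers topological regularity (closedness, openness, and compactness) between $\mathscr{E}$ and $\mathscr{G}$, together with the assumption that $\mathrm{I}$ is a \emph{good} rate function, so that infima over level sets are attained.

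For step (i), I would prove the key identity
\begin{align*}
\{y\in\mathscr{G}:\mathrm{J}(y)\leq M\} \;=\; f\bigl(\{x\in\mathscr{E}:\mathrm{I}(x)\leq M\}\bigr),\qquad M\in[0,\infty).
\end{align*}
The inclusion ``$\supseteq$'' is immediate from the definition of $\mathrm{J}$. For ``$\subseteq$'', if $\mathrm{J}(y)\leq M$ there exists a minimizing sequence $\{x_n\}$ with $f(x_n)=y$ and $\mathrm{I}(x_n)\to\mathrm{J}(y)$; eventually $\{x_n\}$ lies in the compact set $\{\mathrm{I}\leq M+1\}$, so along a subnet $x_n\to x$. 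Continuity of $f$ gives $f(x)=y$, and lower semicontinuity of $\mathrm{I}$ gives $\mathrm{I}(x)\leq \mathrm{J}(y)\leq M$, so $x$ belongs to the right-hand side and realises the infimum. As the continuous image of a compact set in a Hausdorff space, the right-hand side is compact and hence closed, which simultaneously shows that $\{\mathrm{J}\leq M\}$ is compact (goodness) and that $\mathrm{J}$ is lower semicontinuous.

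For steps (ii) and (iii), the upper and lower bounds in Definition \ref{def4.2} are transported through $f$ using the elementary equality $\mu_{\e}\circ f^{-1}(A)=\mu_{\e}(f^{-1}(A))$. Given a closed $F\subset\mathscr{G}$, the preimage $f^{-1}(F)$ is closed in $\mathscr{E}$, so
\begin{align*}
\limsup_{\e\to 0}\e\log\mu_{\e}\circ f^{-1}(F)\;\leq\;-\inf_{x\in f^{-1}(F)}\mathrm{I}(x)\;=\;-\inf_{y\in F}\inf_{x\in f^{-1}(\{y\})}\mathrm{I}(x)\;=\;-\inf_{y\in F}\mathrm{J}(y),
\end{align*}
where the middle equality is just a rewriting of the infimum by partitioning $f^{-1}(F)$ over fibres. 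The lower bound is analogous: for any open $G\subset\mathscr{G}$, $f^{-1}(G)$ is open in $\mathscr{E}$, and the LDP lower bound for $\mu_{\e}$ combined with the same rewriting yields $\liminf_{\e\to 0}\e\log\mu_{\e}\circ f^{-1}(G)\geq -\inf_{y\in G}\mathrm{J}(y)$.

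The main obstacle, and the only place where the \emph{good} rate function hypothesis is essential, is the ``$\subseteq$'' direction of the level-set identity used in step (i). Without goodness of $\mathrm{I}$, the infimum defining $\mathrm{J}(y)$ need not be attained, and one can only conclude that $\{\mathrm{J}\leq M\}\subseteq \overline{f(\{\mathrm{I}\leq M\})}$, which is insufficient to deduce that $\mathrm{J}$ has compact level sets. Once this compact-fibre extraction argument is in place, the remaining topological work (transferring closed sets to closed preimages and open sets to open preimages) is purely formal, and the upper/lower bounds follow directly from the corresponding bounds for $\{\mu_{\e}\}$.
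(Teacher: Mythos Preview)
Your proof is correct and is essentially the standard argument for the contraction principle as given in Dembo--Zeitouni. Note, however, that the paper does \emph{not} supply its own proof of this theorem: it is stated with an explicit citation to \cite[Theorem~4.2.1]{DZ} and then invoked as a black box in the proof of Theorem~\ref{thm4.8}. So there is no ``paper's proof'' to compare against; you have simply filled in the referenced external result, and your three-step outline (goodness of $\mathrm{J}$ via the level-set identity, then the upper and lower bounds by pulling back closed and open sets through $f^{-1}$) matches the classical treatment.

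Two minor remarks. First, the statement as printed in the paper contains typos: part (a) should read $y\in\mathscr{G}$ rather than $y\in\mathscr{E}'$, and the conclusion of (b) should assert that $\mathrm{J}$ (not $\mathrm{I}$) controls the LDP for $\{\mu_{\e}\circ f^{-1}\}$; your proof correctly targets the intended claim. Second, your use of subnets rather than subsequences in the level-set argument is the right choice in the stated generality of arbitrary Hausdorff spaces, where compactness need not imply sequential compactness.
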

Let us consider the following stochastic heat equation (see \cite{DaZ}):
\begin{equation}\label{7p1}
\left\{
\begin{aligned}
dz(t)+ \nu Az(t)dt&=dW(t), \ t\in(0,T),\\
z(0)&=0. 
\end{aligned}
\right.
\end{equation}
For $\Tr(Q)<\infty$, one can show that there exists a unique pathwise strong solution to the system \eqref{7p1} with trajectories in $\C([0,T];\L^2(\mathcal{O}))\cap\L^2(0,T;\H_0^1(\mathcal{O}))$, $\mathbb{P}$-a.s. and satisfies the following energy estimate:
\begin{align}
\mathbb{E}\left[\sup_{t\in[0,T]}\|z(t)\|_{\L^2}^2+\nu\int_0^T\|z(s)\|_{\H_0^1}^2ds\right]\leq CT\Tr(Q). 
\end{align}
Let us define $v=u-z$ and consider the system satisfied by $v$ as 
\begin{equation}\label{7.5}
\left\{
\begin{aligned}
dv(t)+\nu Av(t)dt&=-\alpha B(v(t)+z(t))dt+\beta c(v(t)+z(t))dt, \ t\in(0,T),\\
v(0)&=u_0\in\L^{2}(\mathcal{O}). 
\end{aligned}
\right.
\end{equation}
Note that the randomness in the system \eqref{7.5} comes through $z(\cdot,\omega)$ and the system can be solved for each $\omega\in\Omega$ and one can show that the trajectories belong to $\C([0, T ]; \L^2(\mathcal{O}))\cap\L^2(0, T ; \H^1_0(\mathcal{O}))$, $\mathbb{P}$-a.s. Taking inner product with $v(\cdot)$ with the first equation in \eqref{7.5}, we find 
\begin{align}\label{7.6}
&\frac{1}{2}\frac{d}{dt}\|v(t)\|_{\L^2}^2+\nu\|\partial_xv(t)\|_{\L^2}^2=-\alpha(B(v(t)+z(t)),v(t))+\beta(c(v(t)+z(t)),v(t)).
\end{align}
Using an integration by parts, \eqref{6}, H\"older's and Young's inequalities, it can be easily deduced that 
\begin{align}\label{7.7}
-\alpha(B(v+z),v)&=-\alpha(v\partial_xv,v)-\alpha(z\partial_xv,v)-\alpha(v\partial_xz,v)-\alpha(z\partial_xz,v)\nonumber\\&=2\alpha(z,v\partial_xv)-\alpha(z\partial_xz,v)\nonumber\\&\leq 2\alpha\|z\|_{\L^{\infty}}\|v\|_{\L^2}\|\partial_xv\|_{\L^2}+\alpha\|z\|_{\L^{\infty}}\|\partial_xz\|_{\L^2}\|v\|_{\L^2}\nonumber\\&\leq\frac{\nu}{2}\|\partial_xv\|_{\L^2}^2+\frac{2C\alpha}{\nu}\|z\|_{\H_0^1}^2\|v\|_{\L^2}^2+C\alpha(1+\|v\|_{\L^2}^2)\|z\|_{\H_0^1}^2. 
\end{align}
Similarly, we estimate $\beta(c(v+z),v)$ as 
\begin{align}\label{7.8}
\beta(c(v+z),v)&=-\beta\gamma\|v\|_{\L^2}^2-\beta\|v\|_{\L^4}^4+\beta(1+\gamma)(v^2,v)+2\beta(1+\gamma)(vz,v)\nonumber\\&\quad+\beta(1+\gamma)(z^2,v)-\beta\gamma(z,v)-3\beta(v^2z,v)-3\beta(vz^2,v)-\beta(z^3,v). 
\end{align}
Let us compute each term appearing the right hand side of the equality \eqref{7.8} as 
\begin{align}
\beta(1+\gamma)|(v^2,v)|&\leq\beta(1+\gamma)\|v\|_{\L^2}\|v\|_{\L^4}^2\leq \frac{\beta}{4}\|v\|_{\L^4}^4+\beta(1+\gamma)^2\|v\|_{\L^2}^2,\\
2\beta(1+\gamma)|(vz,v)|&\leq 2\beta(1+\gamma)\|z\|_{\L^{\infty}}\|v\|_{\L^2}^2\leq C\beta(1+\gamma)(1+\|z\|_{\H_0^1}^2)\|v\|_{\L^2}^2,\\
\beta(1+\gamma)|(z^2,v)|&\leq\beta(1+\gamma)\|z\|_{\L^2}\|z\|_{\L^{\infty}}\|v\|_{\L^2}\leq C\beta(1+\gamma)\|z\|_{\H_0^1}^2(1+\|v\|_{\L^2}^2),\\
\beta\gamma|(z,v)|&\leq\beta\gamma\|z\|_{\L^2}\|v\|_{\L^2}\leq\frac{\beta\gamma}{2}\|z\|_{\L^2}^2+\frac{\beta\gamma}{2}\|v\|_{\L^2}^2,\\
3\beta|(v^2z,v)|&\leq\|z\|_{\L^{\infty}}\|v\|_{\L^4}^2\|v\|_{\L^2}\leq\frac{\beta}{4}\|v\|_{\L^4}^4+9C\beta\|z\|_{\H_0^1}^2\|v\|_{\L^2}^2,\\
3\beta|(vz^2,v)|&\leq 3\beta\|z\|_{\L^{\infty}}^2\|v\|_{\L^2}^2\leq 3C\beta\|z\|_{\H_0^1}^2\|v\|_{\L^2}^2,\\
\beta|(z^3,v)|&\leq\beta\|z\|_{\L^6}^3\|v\|_{\L^2}\leq C\beta\|z\|_{\H_0^1}\|z\|_{\L^2}^2\|v\|_{\L^2}\leq\beta\|z\|_{\H_0^1}^2+\frac{C\beta}{4}\|z\|_{\L^2}^4\|v\|_{\L^2}^2. \label{7.15}
\end{align}
Combining \eqref{7.7}-\eqref{7.15},  substituting it in \eqref{7.6} and then integrating from $0$ to $t$, we get 
\begin{align}\label{7.16}
&\|v(t)\|_{\L^2}^2+\nu\int_0^t\|\partial_xv(s)\|_{\L^2}^2ds+\beta\int_0^t\|v(s)\|_{\L^4}^4ds\nonumber\\&\leq\|v_0\|_{\L^2}^2+C(\alpha+\beta)\int_0^t\|z(s)\|_{\H_0^1}^2ds+\frac{\beta\gamma}{2}\int_0^t\|z(s)\|_{\L^2}^2ds\\&\quad +C\int_0^t\bigg\{\left[\alpha\left(\frac{1}{\nu}+1\right)+\beta(1+\gamma)\right]\|z(s)\|_{\H_0^1}^2+\beta\|z(s)\|_{\L^2}^4+\beta(1+\gamma+\gamma^2)\bigg\}\|v(s)\|_{\L^2}^2ds,\nonumber
\end{align}
for all $0\leq t\leq T$. An application of Gronwall's inequality in \eqref{7.16} yields 
\begin{align}\label{7.17}
&\|v(t)\|_{\L^2}^2+\nu\int_0^t\|\partial_xv(s)\|_{\L^2}^2ds+\beta\int_0^t\|v(s)\|_{\L^4}^4ds\nonumber\\&\leq\left\{\|u_0\|_{\L^2}^2+C\left(\alpha+\beta\right)\int_0^T\|z(t)\|_{\H_0^1}^2dt\right\}\nonumber\\&\quad\times\exp\bigg\{C\left[\alpha\left(\frac{1}{\nu}+1\right)+\beta(1+\gamma)+\beta\sup_{t\in[0,T]}\|z(t)\|_{\L^2}^2\right]\int_0^T\|z(t)\|_{\H_0^1}^2dt+C\left[\beta(1+\gamma+\gamma^2)\right]t\bigg\},
\end{align}
for all $t\in[0,T]$. 
\begin{lemma}\label{lem7.7}
	Let a function $\psi\in\mathscr{E}:=\C([0,T];\L^2(\mathcal{O}))\cap\L^2(0,T;\H_0^1(\mathcal{O}))$ be given. Let the map \begin{align}\label{718}\Psi:\psi\mapsto v_{\psi}\end{align} be defined by 
	\begin{equation}\label{7.18}
	\left\{
	\begin{aligned}
	dv_{\psi}(t)+\nu Av_{\psi}(t)dt&=-\alpha B(v_{\psi}(t)+\psi(t))dt+\beta c(v_{\psi}(t)+\psi(t))dt, \ t\in(0,T),\\
	v_{\psi}(0)&=u_0\in\L^{2}(\mathcal{O}). 
	\end{aligned}
	\right.
	\end{equation}
	Then the map $\Psi$ is continuous from the space $\mathscr{E}$ to $\mathscr{E}$. 
\end{lemma}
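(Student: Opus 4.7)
The strategy is the standard one for quasilinear parabolic PDEs: set $w = v_{\psi_1} - v_{\psi_2}$, write the equation satisfied by $w$, do an $\L^2$-energy estimate, bound each cross term so that the $\H_0^1$-piece of the right-hand side is absorbed by the $\nu\|\partial_x w\|_{\L^2}^2$ dissipation, and close by Gronwall. The only wrinkle compared to a purely Lipschitz right-hand side is that the nonlinear differences must be split into a piece ``same $v$, differing $\psi$'' (which produces the forcing driving the continuity statement) and a piece ``differing $v$, same $\psi$'' (which produces a $\|w\|_{\L^2}^2$ term handled by Gronwall). Throughout we use the a priori bound \eqref{7.17} to control $\|v_{\psi_i}(t)\|_{\L^2}$, $\|v_{\psi_i}\|_{\L^2(0,T;\H_0^1)}$ and $\|v_{\psi_i}\|_{\L^4(0,T;\L^4)}$ by a constant depending continuously on $\|\psi_i\|_{\mathscr{E}}$.

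Concretely, fix $\psi_n\to\psi$ in $\mathscr{E}$ and set $w_n=v_{\psi_n}-v_\psi$, $\eta_n=\psi_n-\psi$. Then $w_n(0)=0$ and
\begin{align*}
\frac{d w_n}{d t}+\nu A w_n
&=-\alpha\bigl[B(v_{\psi_n}+\psi_n)-B(v_\psi+\psi)\bigr]
+\beta\bigl[c(v_{\psi_n}+\psi_n)-c(v_\psi+\psi)\bigr].
\end{align*}
I would write each bracket as a sum of two differences by freezing one argument at a time, e.g.
\begin{align*}
B(v_{\psi_n}+\psi_n)-B(v_\psi+\psi)
&=\bigl[B(v_{\psi_n}+\psi_n)-B(v_\psi+\psi_n)\bigr]+\bigl[B(v_\psi+\psi_n)-B(v_\psi+\psi)\bigr],
\end{align*}
and similarly for $c$. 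After pairing with $w_n$ and integrating by parts (using $(u\partial_xu,u)=0$), each of the four resulting cross terms is bounded by H\"older's and Gagliardo--Nirenberg's inequalities in the form
\[
|(\phi_1\partial_x\phi_2,w_n)|\leq\|\phi_1\|_{\L^\infty}\|\partial_x\phi_2\|_{\L^2}\|w_n\|_{\L^2},
\qquad
\|\phi\|_{\L^\infty}\leq C\|\phi\|_{\H_0^1},
\]
and for the cubic part by estimates analogous to \eqref{2p7}. Young's inequality then puts each gradient factor involving $w_n$ under a small multiple of $\nu\|\partial_x w_n\|_{\L^2}^2$, leaving either a prefactor $\|w_n\|_{\L^2}^2$ with a locally integrable weight $\rho_n(t)$ depending on $\|v_{\psi_n}+v_\psi+\psi_n+\psi\|_{\H_0^1}^2+\|v_{\psi_n}+v_\psi+\psi_n+\psi\|_{\L^\infty}^4$, or a forcing term of the form $K_n(t)\|\eta_n(t)\|_{\H_0^1}^2$ where $K_n$ is bounded on $[0,T]$ uniformly in $n$.

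Integrating and applying Gronwall's inequality (in the multiplicative form used in \eqref{7.16}--\eqref{7.17}) then yields
\begin{align*}
\sup_{t\in[0,T]}\|w_n(t)\|_{\L^2}^2+\nu\int_0^T\|\partial_x w_n(t)\|_{\L^2}^2\,d t
\leq C\bigl(\|\psi_n\|_{\mathscr{E}},\|\psi\|_{\mathscr{E}}\bigr)\,
\|\eta_n\|_{\mathscr{E}}^{2},
\end{align*}
with $C(\cdot,\cdot)$ locally bounded thanks to \eqref{7.17}. Since $\|\eta_n\|_{\mathscr{E}}\to 0$, this gives $w_n\to 0$ in $\mathscr{E}$, proving continuity of $\Psi$.

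The main obstacle I expect is the cubic term $c(v+\psi)$: naively expanding $(v_{\psi_n}+\psi_n)^3-(v_\psi+\psi)^3$ produces six contributions and several of them are only in $\L^1$ unless one borrows $\L^\infty$-control from $\H_0^1\hookrightarrow\L^\infty$ in one space dimension. The split above together with the Sobolev embedding resolves this, but it is the step that must be carried out carefully; once it is done, the Burgers term is handled exactly as in \eqref{2.10} and the rest is a standard Gronwall closure.
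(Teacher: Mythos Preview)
Your plan is correct and is essentially the paper's own argument: take the difference, do an $\L^2$ energy estimate, split the nonlinear differences into a ``$w_n$-piece'' absorbed by Gronwall and an ``$\eta_n$-piece'' providing the forcing, use the one-dimensional embedding $\H_0^1\hookrightarrow\L^\infty$, and close with the uniform a priori bound \eqref{7.17}. The paper expands the bilinear and cubic differences directly rather than telescoping, but this is a cosmetic difference; one small caution is that in the cubic term the Gronwall weight you actually need is of order $\|\cdot\|_{\L^\infty}^2\lesssim\|\cdot\|_{\H_0^1}^2$ (which is in $\L^1(0,T)$), not $\|\cdot\|_{\L^\infty}^4$ as you wrote, so be careful when you carry out the details.
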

\begin{proof}
	Let us consider two functions $\psi_1$ and $\psi_2$ in $\mathscr{E}$. We denote the corresponding solutions of \eqref{7.18} as $v_{\psi_i}$, for $i=1,2$. Then $v_{\psi}=v_{\psi_1}-v_{\psi_2}$, where $\psi=\psi_1-\psi_2$, satisfies: 
		\begin{equation}\label{7.19}
	\left\{
	\begin{aligned}
	dv_{\psi}(t)+\nu Av_{\psi}(t)dt&=-\alpha \left[B(v_{\psi_1}(t)+\psi_1(t))-B(v_{\psi_2}(t)+\psi_2(t))\right]dt\\&\quad+\beta \left[c(v_{\psi_1}(t)+\psi_1(t))-c(v_{\psi_2}(t)+\psi_2(t))\right]dt, \ t\in(0,T),\\
	v_{\psi}(0)&=0. 
	\end{aligned}
	\right.
	\end{equation}
	Taking inner product with $v_{\psi}$ to the first equation in \eqref{7.19} to find 
	\begin{align}\label{7.20}
	\frac{1}{2}\frac{d}{dt}\|v_{\psi}(t)\|_{\L^2}^2+\nu\|\partial_xv_{\psi}(t)\|_{\L^2}^2&= -\alpha(B(v_{\psi_1}(t)+\psi_1(t))-B(v_{\psi_2}(t)+\psi_2(t)),v_{\psi})\nonumber\\&\quad+\beta (c(v_{\psi_1}(t)+\psi_1(t))-c(v_{\psi_2}(t)+\psi_2(t)),v_{\psi}).
	\end{align}
	We estimate $-\alpha(B(v_{\psi_1}+\psi_1)-B(v_{\psi_2}+\psi_2),v_{\psi})$ as 
	\begin{align}\label{7.21}
	-\alpha(B(v_{\psi_1}+\psi_1)-B(v_{\psi_2}+\psi_2),v_{\psi})&=-\alpha(v_{\psi}\partial_x(v_{\psi_1}+\psi_1),v_{\psi})-\alpha(\psi\partial_x(v_{\psi_1}+\psi_1),v_{\psi})\nonumber\\&\quad-\alpha((v_{\psi_2}+\psi_2)\partial_xv_{\psi},v_{\psi})-\alpha((v_{\psi_2}+\psi_2)\partial_x\psi,v_{\psi})\nonumber\\&=:\sum_{i=1}^4I_i,
	\end{align}
	where $I_i$, for $i=1,\ldots,4$ are the terms appearing in the right hand side of the equality \eqref{7.21}. We estimate $I_i$, for $i=1,\ldots,4$ using H\"older's and Young's inequalities as 
	\begin{align}
|I_1|&=|\alpha(v_{\psi_1}+\psi_1,v_{\psi}\partial_xv_{\psi})|\leq\alpha\|v_{\psi_1}+\psi_1\|_{\L^{\infty}}\|v_{\psi}\|_{\L^2}\|\partial_xv_{\psi}\|_{\L^2}\nonumber\\&\leq\frac{\nu}{4}\|\partial_xv_{\psi}\|_{\L^2}^2+\frac{C}{\nu}\left(\|v_{\psi_1}\|_{\H_0^1}^2+\|\psi_1\|_{\H_0^1}^2\right)\|v_{\psi}\|_{\L^2}^2,\\
|I_2|&\leq \alpha\|\psi\|_{\L^{\infty}}\|\partial_x(v_{\psi_1}+\psi_1)\|_{\L^2}\|v_{\psi}\|_{\L^2}\leq C\|\psi\|_{\H_0^1}^2+\frac{\alpha^2}{2}\left(\|v_{\psi_1}\|_{\H_0^1}^2+\|\psi_1\|_{\H_0^1}^2\right)\|v_{\psi}\|_{\L^2}^2,\\
|I_3|&\leq\frac{\nu}{4}\|\partial_xv_{\psi}\|_{\L^2}^2+\frac{C}{\nu}\left(\|v_{\psi_2}\|_{\H_0^1}^2+\|\psi_2\|_{\H_0^1}^2\right)\|v_{\psi}\|_{\L^2}^2,\\
|I_4|&\leq\alpha\|v_{\psi_2}+\psi_2\|_{\L^{\infty}}\|\partial_x\psi\|_{\L^2}\|v_{\psi}\|_{\L^2}\leq\|\psi\|_{\H_0^1}^2+\frac{C\alpha^2}{2}\left(\|v_{\psi_2}\|_{\H_0^1}^2+\|\psi_2\|_{\H_0^1}^2\right)\|v_{\psi}\|_{\L^2}^2.
	\end{align}
	We estimate $\beta (c(v_{\psi_1}+\psi_1)-c(v_{\psi_2}+\psi_2),v_{\psi})$ as 
	\begin{align}\label{7.26}
	&\beta (c(v_{\psi_1}+\psi_1)-c(v_{\psi_2}+\psi_2),v_{\psi})\nonumber\\&= \beta(((v_{\psi_1}+\psi_1)-(v_{\psi_2}+\psi_2))[(1+\gamma)(v_{\psi_1}+\psi_1+v_{\psi_2}+\psi_2)\nonumber\\&\qquad-(\gamma+(v_{\psi_1}+\psi_1)^2+(v_{\psi_1}+\psi_1)(v_{\psi_1}+\psi_1)+(v_{\psi_2}+\psi_2)^2)],v_{\psi})\nonumber\\&=\beta(v_{\psi}[(1+\gamma)(v_{\psi_1}+\psi_1+v_{\psi_2}+\psi_2)\nonumber\\&\qquad-(\gamma+(v_{\psi_1}+\psi_1)^2+(v_{\psi_1}+\psi_1)(v_{\psi_1}+\psi_1))+(v_{\psi_2}+\psi_2)^2],v_{\psi})\nonumber\\&\quad+\beta(\psi[(1+\gamma)(v_{\psi_1}+\psi_1+v_{\psi_2}+\psi_2)\nonumber\\&\qquad-(\gamma+(v_{\psi_1}+\psi_1)^2+(v_{\psi_1}+\psi_1)(v_{\psi_1}+\psi_1)+(v_{\psi_2}+\psi_2)^2)],v_{\psi})\nonumber\\&=-\beta\gamma\|v_{\psi}\|_{\L^2}^2+\beta(1+\gamma)(v_{\psi}(v_{\psi_1}+\psi_1+v_{\psi_2}+\psi_2),v_{\psi})\nonumber\\&\quad-\beta(v_{\psi}((v_{\psi_1}+\psi_1)^2+(v_{\psi_1}+\psi_1)(v_{\psi_1}+\psi_1)+(v_{\psi_2}+\psi_2)^2),v_{\psi})\nonumber\\&\quad+\beta(1+\gamma)(\psi(v_{\psi_1}+\psi_1+v_{\psi_2}+\psi_2),v_{\psi})-\beta\gamma(\psi,v_{\psi})\nonumber\\&\quad-\beta(\psi((v_{\psi_1}+\psi_1)^2+(v_{\psi_1}+\psi_1)(v_{\psi_1}+\psi_1)+(v_{\psi_2}+\psi_2)^2),v_{\psi})=:\sum_{i=5}^9I_i,
	\end{align}
	where $I_i$, for $i=5,\ldots,8$ are the final five terms appearing in the right hand side of the equality \eqref{7.26}. We estimate $I_i$, for $i=5,\ldots,8$ using H\"older's and Young's inequalities as
	\begin{align}
	|I_5|&\leq\beta(1+\gamma)\|v_{\psi_1}+\psi_1+v_{\psi_2}+\psi_2\|_{\L^{\infty}}\|v_{\psi}\|_{\L^2}^2\nonumber\\&\leq C\beta(1+\gamma)\left(\|v_{\psi_1}\|_{\H_0^1}+\|\psi_1\|_{\H_0^1}+\|v_{\psi_2}\|_{\H_0^1}+\|\psi_2\|_{\H_0^1}\right)\|v_{\psi}\|_{\L^2}^2,\\
	|I_6|&\leq\beta\|(v_{\psi_1}+\psi_1)^2+(v_{\psi_1}+\psi_1)(v_{\psi_1}+\psi_1)+(v_{\psi_2}+\psi_2)^2\|_{\L^{\infty}}\|v_{\psi}\|_{\L^2}^2\nonumber\\&\leq C\beta\left(\|v_{\psi_1}\|_{\H_0^1}^2+\|\psi_1\|_{\H_0^1}^2+\|v_{\psi_2}\|_{\H_0^1}^2+\|\psi_2\|_{\H_0^1}^2\right)\|v_{\psi}\|_{\L^2}^2,\\
	|I_7|&\leq \beta(1+\gamma)\|\psi\|_{\L^2}\|v_{\psi_1}+\psi_1+v_{\psi_2}+\psi_2\|_{\L^{\infty}}\|v_{\psi}\|_{\L^2}\nonumber\\&\leq\frac{\beta(1+\gamma)}{2}\|\psi\|_{\L^2}^2+C\beta(1+\gamma)\left(\|v_{\psi_1}\|_{\H_0^1}^2+\|\psi_1\|_{\H_0^1}^2+\|v_{\psi_2}\|_{\H_0^1}^2+\|\psi_2\|_{\H_0^1}^2\right)\|v_{\psi}\|_{\L^2}^2,\\ |I_8|&\leq\beta\gamma\|\psi\|_{\L^2}\|v_{\psi}\|_{\L^2}\leq\frac{\beta\gamma}{2}\|\psi\|_{\L^2}^2+\frac{\beta\gamma}{2}\|v_{\psi}\|_{\L^2}^2,\\
	|I_9|&\leq\beta\|\psi\|_{\L^2}\|(v_{\psi_1}+\psi_1)^2+(v_{\psi_1}+\psi_1)(v_{\psi_1}+\psi_1)+(v_{\psi_2}+\psi_2)^2\|_{\L^{\infty}}\|v_{\psi}\|_{\L^2}\nonumber\\&\leq\frac{\beta}{2}\|\psi\|_{\L^2}^2+C\beta\left(\|v_{\psi_1}\|_{\H_0^1}^2+\|\psi_1\|_{\H_0^1}^2+\|v_{\psi_2}\|_{\H_0^1}^2+\|\psi_2\|_{\H_0^1}^2\right)\|v_{\psi}\|_{\L^2}^2.\label{7.31}
	\end{align}
	Combining \eqref{7.21}-\eqref{7.31}, substituting it in \eqref{7.20} and then integrating it from $0$ to $t$ to get
	\begin{align}\label{7.32}
&\|v_{\psi}(t)\|_{\L^2}^2+\nu\int_0^t\|\partial_xv_{\psi}(s)\|_{\L^2}^2ds+2\beta\gamma\int_0^t\|v_{\psi}(s)\|_{\L^2}^2ds\nonumber\\&\leq C\left(\frac{1}{\nu}+\alpha^2+\beta(1+\gamma)\right)\int_0^t\left(\|v_{\psi_1}(s)\|_{\H_0^1}^2+\|\psi_1(s)\|_{\H_0^1}^2+\|v_{\psi_2}(s)\|_{\H_0^1}^2+\|\psi_2(s)\|_{\H_0^1}^2\right)\|v_{\psi}(s)\|_{\L^2}^2ds\nonumber\\&\quad+C\int_0^t\|\psi(s)\|_{\H_0^1}^2ds+\frac{\beta}{2}(1+\gamma)\int_0^t\|\psi(s)\|_{\L^2}^2ds+C\beta(1+\gamma)\int_0^t\|v_{\psi}(s)\|_{\L^2}^2ds.
	\end{align} 
	An application of Gronwall's inequality in \eqref{7.32} gives 
	\begin{align}\label{7.33}
	&\|v_{\psi}(t)\|_{\L^2}^2+\nu\int_0^t\|\partial_xv_{\psi}(s)\|_{\L^2}^2ds+2\beta\gamma\int_0^t\|v_{\psi}(s)\|_{\L^2}^2ds\nonumber\\&\leq C\int_0^t\|\psi(s)\|_{\H_0^1}^2ds+\frac{\beta}{2}(1+\gamma)\int_0^t\|\psi(s)\|_{\L^2}^2ds\\&\quad\times\exp\left\{C\beta(1+\gamma)t+\int_0^t\left(\|v_{\psi_1}(s)\|_{\H_0^1}^2+\|\psi_1(s)\|_{\H_0^1}^2+\|v_{\psi_2}(s)\|_{\H_0^1}^2+\|\psi_2(s)\|_{\H_0^1}^2\right)ds\right\},\nonumber
	\end{align}
	for all $t\in[0,T]$. Let us now take $\psi_n\to\psi$ in $\C([0,T];\L^2(\mathcal{O}))\cap\L^2(0,T;\H_0^1(\mathcal{O})),$ as $n\to\infty$. From \eqref{7.17}, it is clear that 
	$$\sup_{t\in[0,T]}\|v_n(t)\|_{\L^2}^2+\nu\int_0^T\|v_n(t)\|_{\H_0^1}^2dt,$$ is bounded uniformly and independent of $n$. Thus, from \eqref{7.33}, it is immediate that $v_{\psi_n}\to v_{\psi}$ in $\C([0,T];\L^2(\mathcal{O}))\cap\L^2(0,T;\H_0^1(\mathcal{O}))$ and hence the continuity of the map $\Psi$ follows. 
\end{proof}

Let $\U_0:=\mathrm{Q}^{\frac{1}{2}}\L^2(\mathcal{O})$ and $\|\cdot\|_0$ denotes the norm on $\U_0$ and  $\mathcal{G}_0\left(\int_0^{\cdot}h(s)d s\right)$ is the set of solutions of the equation:
\begin{equation}\label{4.22}
\left\{
\begin{aligned}
dz(t)+ \nu Az(t)dt&=\sqrt{\e}h(t)dt, \ t\in(0,T),\\
z(0)&=0. 
\end{aligned}
\right.
\end{equation}

\begin{theorem}\label{thm4.8}
	Let $\Theta$ maps from $\mathscr{E}$ to $\mathscr{E}$ and is given by 
	\begin{align}
	\Theta(z)=z+\Psi(z),
	\end{align}
	where the map $\Psi(\cdot)$ is defined in \eqref{718} and \eqref{7.18}.
	For any given $R>0$ and $\delta >0$, there exists a large positive constant $\varrho_0$ such that for all $\varrho_0$, if we define the set $A_{\varrho}:=\Theta(\varrho\Theta^{-1}(B_R^c)),$ 
	then the unique pathwise strong solution $u(\cdot)$ of the system \eqref{abstract}  satisfies:
	\begin{align}\label{4.24}
	\mathbb{P}\left\{u\in A_{\varrho} \right\}\leq \exp\left\{-\varrho^2(\mathrm{J}(B_R^c)-\delta)\right\},
	\end{align}
	where  \begin{align*}B_R:=\bigg\{&v\in \mathrm{C}([0,T];\L^2(\mathcal{O})): \sup_{0\leq t\leq T}\|v(t)\|_{\L^2}^2<R \bigg\},\end{align*}
	\begin{align}
	\mathrm{J}(B_R^c)=\inf_{x\in\Theta^{-1}(B_R^c)}\mathrm{I}(x),
	\end{align}
	and 
	\begin{align}\label{4.26}
	\mathrm{I}(x)=\inf_{h\in\mathrm{L}^2(0,T;\U_0):\ x\in\mathcal{G}_0\left(\int_0^{\cdot}h(s)d s\right)}\left\{\frac{1}{2}\int_0^T\|h(t)\|_0^2d t \right\}.
	\end{align}
\end{theorem}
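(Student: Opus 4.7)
The plan is to establish the estimate via the Freidlin--Wentzell large deviations principle (LDP) for the Ornstein--Uhlenbeck process, transferred through the continuous map $\Theta$ by a contraction argument, together with a scaling identity that matches the parameter $\varrho$ to the small noise parameter $\varepsilon = 1/\varrho^2$.

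First, I would consider the small-noise family $z^{\varepsilon}$ solving
\begin{equation*}
dz^{\varepsilon}(t)+\nu Az^{\varepsilon}(t)\,dt=\sqrt{\varepsilon}\,dW(t),\qquad z^{\varepsilon}(0)=0,
\end{equation*}
which, by linearity, equals $\sqrt{\varepsilon}\,z$ where $z$ solves \eqref{7p1}. Since $W$ is a trace-class $Q$-Wiener process on $\L^{2}(\mathcal{O})$, the family $\{z^{\varepsilon}\}$ is Gaussian in the Polish space $\mathscr{E}=\C([0,T];\L^{2}(\mathcal{O}))\cap\L^{2}(0,T;\H_{0}^{1}(\mathcal{O}))$, and it satisfies the LDP on $\mathscr{E}$ with good rate function $\mathrm{I}$ given by \eqref{4.26}; this follows from standard results (see e.g.\ \cite{DaZ, SSSP}) by identifying trajectories of $z^{\varepsilon}$ with those produced by controls $h\in\L^{2}(0,T;\U_{0})$ through \eqref{4.22}.

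Next I would define $u^{\varepsilon}:=\Theta(z^{\varepsilon})=z^{\varepsilon}+\Psi(z^{\varepsilon})\in\mathscr{E}$. Because Lemma \ref{lem7.7} shows that the map $\Psi:\mathscr{E}\to\mathscr{E}$ is continuous, so is $\Theta$, and the contraction principle (Theorem \ref{thm4.6}) transfers the LDP for $\{z^{\varepsilon}\}$ to an LDP for $\{u^{\varepsilon}\}$ with the good rate function
\begin{equation*}
\mathrm{J}(y)=\inf\bigl\{\mathrm{I}(x):x\in\mathscr{E},\ y=\Theta(x)\bigr\},
\qquad y\in\mathscr{E}.
\end{equation*}
In particular, since $B_{R}^{c}$ is closed in $\mathscr{E}$, the upper bound in Definition \ref{def4.2} yields
\begin{equation*}
\limsup_{\varepsilon\to 0}\varepsilon\log \mathbb{P}\bigl(u^{\varepsilon}\in B_{R}^{c}\bigr)
\leq -\inf_{y\in B_{R}^{c}}\mathrm{J}(y)
= -\inf_{x\in\Theta^{-1}(B_{R}^{c})}\mathrm{I}(x)
= -\mathrm{J}(B_{R}^{c}).
\end{equation*}

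The final step is the scaling identity. Writing $u=\Theta(z)$ where $z$ is the OU process solving \eqref{7p1}, we note that $u\in A_{\varrho}=\Theta(\varrho\,\Theta^{-1}(B_{R}^{c}))$ if and only if $z/\varrho\in\Theta^{-1}(B_{R}^{c})$, equivalently $\Theta(z/\varrho)\in B_{R}^{c}$. Choosing $\varepsilon=1/\varrho^{2}$ makes $z^{\varepsilon}=\sqrt{\varepsilon}\,z=z/\varrho$, so
\begin{equation*}
\mathbb{P}\{u\in A_{\varrho}\}
=\mathbb{P}\bigl(\Theta(z/\varrho)\in B_{R}^{c}\bigr)
=\mathbb{P}\bigl(u^{\varepsilon}\in B_{R}^{c}\bigr).
\end{equation*}
Plugging $\varepsilon=1/\varrho^{2}$ into the LDP upper bound, for any $\delta>0$ there exists $\varrho_{0}$ so that for all $\varrho\geq\varrho_{0}$,
\begin{equation*}
\log\mathbb{P}\{u\in A_{\varrho}\}\leq -\varrho^{2}\bigl(\mathrm{J}(B_{R}^{c})-\delta\bigr),
\end{equation*}
which is \eqref{4.24}. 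The main obstacle I anticipate is the rigorous verification of the LDP for $z^{\varepsilon}$ together with a clean statement ensuring that $\mathrm{I}$ is a good rate function on $\mathscr{E}$ (so that the contraction principle in the form of Theorem \ref{thm4.6} applies); everything else is a direct consequence of continuity of $\Theta$ via Lemma \ref{lem7.7} and the linear scaling of the stochastic convolution.
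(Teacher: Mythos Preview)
Your proposal is correct and follows the same route as the paper's proof: establish the LDP for the Ornstein--Uhlenbeck family $z^{\varepsilon}=\sqrt{\varepsilon}\,z$, transfer it to $u^{\varepsilon}=\Theta(z^{\varepsilon})$ via the contraction principle using Lemma~\ref{lem7.7}, apply the upper bound on the closed set $B_R^c$, and then identify $\{u\in A_{\varrho}\}$ with $\{u^{\varepsilon}\in B_R^c\}$ through the scaling $\varepsilon=1/\varrho^{2}$. The one step worth making explicit (in both your argument and the paper's) is that your ``if and only if'' $\Theta(z)\in\Theta(S)\Leftrightarrow z\in S$ relies on injectivity of $\Theta$, which does hold because given $u=\Theta(z)$ the component $v=\Psi(z)$ is the unique solution of the linear problem $\partial_t v+\nu Av=-\alpha B(u)+\beta c(u)$, $v(0)=u_0$, so $z=u-v$ is determined by $u$.
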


\begin{proof}
	For each $h\in\mathrm{L}^2(0,T;\U_0)$, we us use the notation $\mathcal{G}_0\left(\int_0^{\cdot}h(s)d s\right)$ for the set of solutions of the equation \eqref{4.22}. For each $\e>0$, let $z^{\e}(\cdot)$ denotes the unique pathwise strong solution of the stochastic heat equation: 
	\begin{equation}\label{4.23}
	\left\{
	\begin{aligned}
	dz^{\e}(t)+ \nu Az^{\e}(t)dt&=\sqrt{\e}dW(t), \ t\in(0,T),\\
	z^{\e}(0)&=0. 
	\end{aligned}
	\right.
	\end{equation}
	Then $z^{\e}(t)=\sqrt{\e}\int_0^tR{(t-s)}dW(s)=\sqrt{\e}z(t)$, where $R(\cdot)$ is the heat semigroup and $z(\cdot)$ is the unique pathwise strong solution of the system \eqref{7p1}. Note that (see section 12.3, \cite{DaZ}, \cite{SSSP}) the large deviations rate function for the family $z^{\e}$ is given by 
	\begin{align}
	\mathrm{I}(x)=\inf_{h\in\mathrm{L}^2(0,T;\U_0):\ x\in\mathcal{G}_0\left(\int_0^{\cdot}h(s)d s\right)}\left\{\frac{1}{2}\int_0^T\|h(t)\|_0^2d t \right\}.
	\end{align}
	Let us now define the map $\Theta$ from $\mathscr{E}$ to $\mathscr{E}$ by 
	$
	\Theta(z)=z+\Psi(z),
	$
	where the map $\Psi(\cdot)$ is defined in \eqref{718} and \eqref{7.18}. Clearly, the map $\Theta$ is continuous by using Lemma \ref{lem7.7} and \begin{align}\label{4.03}
	u^{\e}=\Theta(z^{\e})=\Theta(\sqrt{\e}z),\end{align} where $u^{\e}$ satisfies: 
	\begin{equation}\label{4.27}
	\left\{
	\begin{aligned}
	du^{\e}(t)&=[- Au^{\e}(t)-\alpha B(u^{\e}(t))+\beta c(u^{\e}(t))]dt+\sqrt{\e} dW(t), \ t\in(0,T),\\
	u(0)&=u_0\in\L^{2}(\mathcal{O}). 
	\end{aligned}
	\right.
	\end{equation}
	Then, using Contraction principle (see Theorem \ref{thm4.6}), we deduce that the family $u^{\e}$ satisfies the large deviation principle with the rate function: 
	\begin{align}
	\J(A)=\inf_{x\in\Theta^{-1}(A)}\I(x),
	\end{align}
	for any Borel set $A\in \mathscr{E}$, where $\Theta^{-1}(A)=\left\{x\in\mathscr{E}:\Theta(x)\in A\right\}.$ Thus, using the LDP (see Definition \ref{def4.2} (i)), we have  
	\begin{align}
	\limsup_{\e\to 0}\e\log \P\left\{u^{\e}\in B_R^c\right\}\leq -\J(B_R^c),
	\end{align}
	where $B_R$ is an open ball in $\mathscr{E}$ with center zero and radius $R>0$. Thus, for any $\delta>0$, there exists an $\varepsilon_1>0$ such that for all $0<\e<\e_1$, we have 
	\begin{align*}
	\e\log \P\left\{u^{\e}\in B_R^c\right\}\leq -\J(B_R^c)+\delta. 
	\end{align*}
	The above inequality easily gives
	\begin{align}\label{4.30}
	\P\left\{u^{\e}\in B_R^c\right\}\leq \exp\left\{-\frac{1}{\e}(\J(B_R^c)-\delta)\right\}.
	\end{align}
	From \eqref{4.30}, it is clear that 
	\begin{align}\label{4.31}
	\mathbb{P}\left\{z\in\frac{1}{\sqrt{\e}}\Theta^{-1}(B_R^c)\right\}\leq \exp\left\{-\frac{1}{\e}(\J(B_R^c)-\delta)\right\},
	\end{align}
	using \eqref{4.03}. Let us denote the set $A$ to be $\Theta\left(\frac{1}{\sqrt{\e}}\Theta^{-1}(B_R^c)\right)$ and from \eqref{4.31}, we infer that 
	\begin{align}\label{4.32}
	\mathbb{P}\left\{z\in A\right\}\leq \exp\left\{-\frac{1}{\e}(\J(B_R^c)-\delta)\right\},
	\end{align}
	since $u=\Theta(z)$, which completes the proof. 
\end{proof}
\begin{remark}
	If we take $\varrho_0=1$, then the set $A_1$ becomes $B_R^c$, and from \eqref{4.24}, we deuce that 
	\begin{align}\label{4.36}
	\mathbb{P}\left\{u\in B_R^c \right\}\leq \exp\left\{-\varrho^2(\mathrm{J}(B_R^c)-\delta)\right\},
	\end{align}
	which gives the	rate of decay as $\mathrm{J}(B_R^c)$. Moreover, if one can assure the existence of an $R >0$ such that $B_R^c\subseteq A_{\varrho_0}$, then also the Theorem \ref{thm4.8} leads to \eqref{4.36}. 
\end{remark}

\begin{remark}\label{rem5.10}
	An application of It\^o's formula to the process $\|u(\cdot)\|_{\L^2}^2$ yields 
		\begin{align}\label{5p50}
	&\|u(t)\|_{\L^2}^2+2\nu\int_0^t\|u(s)\|_{\H_0^1}^2d s+2\beta\gamma\int_0^t\|u(s)\|_{\L^2}^2ds+2\beta\int_0^t\|u(s)\|_{\L^4}^4ds\nonumber\\&=\|u_0\|_{\L^2}^2+2\beta(1+\gamma)\int_0^t(u(s)^2,u(s))ds+\int_0^t\Tr(\Phi Q\Phi^*) ds+2\int_0^t(\Phi d W(s),u(s))\nonumber\\&\leq \|u_0\|_{\L^2}^2+\beta\int_0^t\|u(s)\|_{\L^4}^4ds+\beta(1+\gamma)^2\int_0^t\|u(s)\|_{\L^2}^2ds\nonumber\\&\quad+\eta(t)+2\int_0^t\|Q^{1/2}u(s)\|_{\L^2}^2ds+t\Tr(Q),
	\end{align}
	where 
	\begin{align}\label{5p51}
	\eta(t)=2\int_0^t(d W(s),u(s))-2\int_0^t\|Q^{1/2}u(s)\|_{\L^2}^2ds.
	\end{align}
	But we know that 
	\begin{align}
	\|Q^{1/2}u\|_{\L^2}\leq\|Q^{1/2}\|_{\mathcal{L}(\L^2)}\|u\|_{\L^2}\leq\Tr(Q)^{1/2}\|u\|_{\L^2}.
	\end{align}
	Thus, from \eqref{5p51}, it is immediate that 
	\begin{align}
		&\|u(t)\|_{\L^2}^2+2\nu\int_0^t\|u(s)\|_{\H_0^1}^2d s+\beta\int_0^t\|u(s)\|_{\L^4}^4ds\nonumber\\&\leq \|u_0\|_{\L^2}^2+\Tr(Q) t+\eta(t)+\beta(1+\gamma^2)\int_0^t\|u(s)\|_{\L^2}^2ds+2\Tr(Q)\int_0^t\|u(s)\|_{\L^2}^2ds
	\end{align}
	An application of Gronwall's inequality yields 
	\begin{align}
	\|u(t)\|_{\L^2}^2&\leq \left(\|u_0\|_{\L^2}^2+\Tr(Q) t+\eta(t)\right)+\int_0^t \left(\|u_0\|_{\L^2}^2+\Tr(Q) s+\eta(s)\right)\left(\beta(1+\gamma^2)+2\Tr(Q)\right)\nonumber\\&\qquad\times\exp\left(\int_s^t\left(\beta(1+\gamma^2)+2\Tr(Q)\right)dr\right)ds.
	\end{align}
	Taking supremum over both sides in the above inequality, we find 
	\begin{align}
&	\sup_{t\in[0,T]}\|u(t)\|_{\L^2}^2\leq \left(\|u_0\|_{\L^2}^2+\Tr(Q) T+\sup_{t\in[0,T]}\eta(t)\right)M\exp\left(MT\right),
	\end{align}
where $M=	\beta(1+\gamma^2)+2\Tr(Q)$. For fixed $R>0$, we have 
	\begin{align}\label{5p56}
	\mathbb{P}\left\{	\sup_{t\in[0,T]}\|u(t)\|_{\L^2}>R\right\}& \leq \mathbb{P}\left\{\left(\|u_0\|_{\L^2}^2+\Tr(Q) T+\sup_{t\in[0,T]}\eta(t)\right)Me^{MT}>R^2\right\}\nonumber\\&=\mathbb{P}\left\{\sup_{t\in[0,T]}\eta(t)>\frac{R^2}{M}e^{-MT}-\left(\|u_0\|_{\L^2}^2+\Tr(Q)T\right)\right\}\nonumber\\&=\mathbb{P}\left\{\sup_{t\in[0,T]}\exp(\eta(t))>\exp\left[\frac{R^2}{M}e^{-MT}-\left(\|u_0\|_{\L^2}^2+\Tr(Q)T\right)\right]\right\}\nonumber\\&\leq \exp\left[\left(\|u_0\|_{\L^2}^2+\Tr(Q)T\right)\right]e^{-\frac{R^2}{M}e^{-MT}},
	\end{align}
	where we used Doob’s martingale inequality. 
	
	From the expression \eqref{5p56}, we know that the rate of decay is of the order of $R^2$. We can also follow the same procedure as in the Theorem \ref{thm4.8} to get a similar result. Let us define the set 
	\begin{align}
	\F_R:=\left\{x:\J(x)\leq R^2 \right\},
	\end{align}
	for $R> 0$ and define the set $\mathrm{G}_R$ as any open neighborhood of $\F_R$. Then for any given any $\delta>0$, there exists an $\e_1 > 0$ such that for all $0<\e < \e_1$, from \eqref{4.30}, we have
	\begin{align}
	\mathbb{P}\left\{u^{\e}\in\mathrm{G}_R^c\right\}\leq \exp\left\{-\frac{1}{\e}(\J(\mathrm{G}_R^c)-\delta)\right\}\leq \exp\left\{-\frac{1}{\e}(R^2-\delta)\right\},
	\end{align}
	using the definition of the set $\mathrm{G}_R$. Hence, using \eqref{4.03}, it is immediate that
	\begin{align}
	\mathbb{P}\left\{u\in\Psi\left(\frac{1}{\sqrt{\e}}\Psi^{-1}(\mathrm{G}_R^c)\right)\right\}\leq \exp\left\{-\frac{1}{\e}(R^2-\delta)\right\}.
	\end{align} 
\end{remark}

\section{Exponential Moments, Invariant Measures and Ergodicity}\label{sec9}\setcounter{equation}{0} 
Let us now discuss the existence and uniqueness of invariant measures and ergodicity results for the stochastic Burgers-Huxley equation \eqref{abstract} with additive Gaussian noise. We show that there exists a unique invariant measure for the Markovian transition probability associated to the  system (\ref{abstract}) by making use of exponential stability of solutions.

\subsection{Exponential moments and stability}
In this subsection, we establish the exponential stability of the Burgers-Huxley equation perturbed by additive Gaussian noise. That is, we are considering (\ref{2.1}) with additive noise. Thus $u(\cdot)$ satisfies: 
\begin{equation}\label{6.1a}
\left\{
\begin{aligned}
du(t)&=[- \nu Au(t)-\alpha B(u(t))+\beta c(u(t))]dt+dW(t), \ t\in(0,T),\\
u(0)&=u_0,
\end{aligned}
\right.
\end{equation}
where $u_0\in\L^2(\mathcal{O})$ and $\W(\cdot)$ is an $\L^2(\mathcal{O})$-valued $Q$-Wiener process with $\Tr(Q)<\infty$. Since $\Tr(Q)<\infty$, the existence and uniqueness of strong solution to the system (\ref{6.1a}) follows from the Theorem \ref{exis}. Thus, we know that the system \eqref{abstract} has a unique strong solution with paths in $\C([0,T];\L^2(\mathcal{O}))\cap\mathrm{L}^2(0,T;\H_0^1(\mathcal{O}))$, $\P$-a.s.  Then, we have the following Theorem on the exponential moments of the system \eqref{6.1a}. 
\begin{theorem}\label{expe}
	Let $u(\cdot)$ be a unique strong solution of the problem (\ref{6.1a}) such that  
	for \begin{align}\label{7.2}0<\e\leq \frac{\left(\nu\pi^2-{\beta(1+\gamma^2)}\right)}{2\|Q\|_{\mathcal{L}(\L^2)}}\ \text{ and }\  \nu > \frac{\beta(1+\gamma^2)}{\pi^2},\end{align} we have 
	\begin{align}\label{5.68}
	\E\left[\exp\left(\e\|u(t)\|_{\L^2}^2+\e\nu\int_0^t\|u(s)\|_{\H_0^1}^2d s+\e\beta\int_0^t\|u(s)\|_{\L^4}^4ds\right)\right]\leq e^{\e\|u_0\|_{\L^2}^2+\e t\Tr(Q)}.
	\end{align}
\end{theorem}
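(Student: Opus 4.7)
The plan is to show that the process
\begin{equation*}
F(t) := \exp\!\Bigl(\e\|u(t)\|_{\L^2}^2 + \e\nu\!\int_0^t\|u(s)\|_{\H_0^1}^2 ds + \e\beta\!\int_0^t\|u(s)\|_{\L^4}^4 ds - \e t\,\Tr(Q)\Bigr)
\end{equation*}
is a (local) supermartingale under the assumption \eqref{7.2}, from which the conclusion $\E[F(t)]\leq F(0)=e^{\e\|u_0\|_{\L^2}^2}$ is exactly the claim \eqref{5.68} after transposing the $-\e t\,\Tr(Q)$ term.

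First, I would apply the infinite-dimensional It\^o formula to the process $\|u(\cdot)\|_{\L^2}^2$. Using $\langle Au,u\rangle=\|u\|_{\H_0^1}^2$, the cancellation $\langle B(u),u\rangle=0$ from \eqref{6}, the bound $(c(u),u)\leq \frac{1+\gamma^2}{2}\|u\|_{\L^2}^2-\frac{1}{2}\|u\|_{\L^4}^4$ from \eqref{7}, and the fact that the quadratic variation of the martingale part $2\int_0^t(u,dW)$ equals $4\int_0^t\|Q^{1/2}u\|_{\L^2}^2 ds$, one gets the differential inequality
\begin{equation*}
d\|u\|_{\L^2}^2 \leq \bigl[-2\nu\|u\|_{\H_0^1}^2 + \beta(1+\gamma^2)\|u\|_{\L^2}^2 - \beta\|u\|_{\L^4}^4 + \Tr(Q)\bigr]dt + 2(u,dW).
\end{equation*}
Then I would apply the real-variable It\^o formula to $F(t)=e^{g(t)}$, with $g(t)$ the exponent above, to obtain
\begin{equation*}
dF(t) = F(t)\,dg(t) + \tfrac12 F(t)\,d\langle g\rangle_t,\qquad d\langle g\rangle_t = 4\e^2\|Q^{1/2}u\|_{\L^2}^2 dt.
\end{equation*}
Substituting the inequality for $d\|u\|_{\L^2}^2$ and collecting terms, the pathwise cancellations $+\e\nu\|u\|_{\H_0^1}^2-2\e\nu\|u\|_{\H_0^1}^2=-\e\nu\|u\|_{\H_0^1}^2$, $+\e\beta\|u\|_{\L^4}^4-\e\beta\|u\|_{\L^4}^4=0$ and $-\e\Tr(Q)+\e\Tr(Q)=0$ leave the drift of $F$ bounded above by
\begin{equation*}
\e F(t)\Bigl[-\nu\|u\|_{\H_0^1}^2 + \beta(1+\gamma^2)\|u\|_{\L^2}^2 + 2\e\|Q^{1/2}u\|_{\L^2}^2\Bigr]\,dt + 2\e F(t)(u,dW).
\end{equation*}

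Next I would use the Poincar\'e inequality $\|u\|_{\L^2}^2\leq \pi^{-2}\|u\|_{\H_0^1}^2$ together with the operator-norm bound $\|Q^{1/2}u\|_{\L^2}^2\leq \|Q\|_{\mathcal{L}(\L^2)}\|u\|_{\L^2}^2$ to dominate the bracket by
\begin{equation*}
\Bigl[-\nu + \tfrac{\beta(1+\gamma^2)}{\pi^2} + \tfrac{2\e\|Q\|_{\mathcal{L}(\L^2)}}{\pi^2}\Bigr]\|u\|_{\H_0^1}^2,
\end{equation*}
which is precisely $\leq 0$ under the hypothesis $\e\leq (\nu\pi^2-\beta(1+\gamma^2))/(2\|Q\|_{\mathcal{L}(\L^2)})$ together with $\nu>\beta(1+\gamma^2)/\pi^2$. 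Consequently the drift of $F$ is nonpositive, so $F$ is a local supermartingale.

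Finally I would localize with the stopping times $\tau_N$ from \eqref{stopm} so that on $[0,t\wedge\tau_N]$ the quantities in the exponent are bounded and the stochastic integral is a genuine martingale; taking expectations gives $\E[F(t\wedge\tau_N)]\leq F(0)=e^{\e\|u_0\|_{\L^2}^2}$. Since $\tau_N\uparrow T$ $\mathbb{P}$-a.s.\ (cf.\ \eqref{3p11}) and $F\geq 0$, Fatou's lemma passes the bound to $\E[F(t)]\leq e^{\e\|u_0\|_{\L^2}^2}$, which is \eqref{5.68}. The main obstacle is the rigorous localization step: $F$ is a nonlinear functional of $u$ whose exponent already contains the time integrals of $\|u\|_{\H_0^1}^2$ and $\|u\|_{\L^4}^4$, so one must verify that the cutoff $\tau_N$ controls all of these simultaneously (as indeed built into \eqref{stopm}) and that the martingale part $2\e\int_0^{t\wedge\tau_N}F(s)(u(s),dW(s))$ has zero mean, which follows because $F$ is bounded on $[0,\tau_N]$ and $\|u\|_{\L^2}$ is bounded there as well.
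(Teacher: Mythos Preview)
Your proof is correct and follows essentially the same approach as the paper: apply It\^o's formula to $\|u\|_{\L^2}^2$, then to the exponential, and use the Poincar\'e inequality together with $\|Q^{1/2}u\|_{\L^2}^2\leq\|Q\|_{\mathcal{L}(\L^2)}\pi^{-2}\|u\|_{\H_0^1}^2$ to make the drift nonpositive under \eqref{7.2}. The only cosmetic difference is that the paper leaves the $\e\Tr(Q)$ term on the right-hand side and closes with Gronwall's inequality, whereas you absorb $-\e t\Tr(Q)$ into the exponent from the start and conclude via a supermartingale/Fatou argument; the computations are otherwise identical.
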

\begin{proof}
	Let us apply the infinite dimensional It\^o's formula to the process $\|u(\cdot)\|_{\L^2}^2$ to get 
	\begin{align}
	&\|u(t)\|_{\L^2}^2+2\nu\int_0^t\|u(s)\|_{\H_0^1}^2d s+2\beta\gamma\int_0^t\|u(s)\|_{\L^2}^2ds+2\beta\int_0^t\|u(s)\|_{\L^4}^4ds\nonumber\\&=\|u_0\|_{\L^2}^2+2\beta(1+\gamma)\int_0^t(u(s)^2,u(s))ds+\Tr(Q) t+2\int_0^t(d W(s),u(s)).
	\end{align}
	We define $$\Theta(t):=\|u(t)\|_{\L^2}^2+\nu\int_0^t\|u(s)\|_{\H_0^1}^2d s+\beta\int_0^t\|u(s)\|_{\L^4}^4ds,$$ and apply the infinite dimensional  It\^o's formula to the process $e^{\e\Theta(t)}$ to obtain 
	\begin{align}\label{5.62a}
	e^{\e\Theta(t)}&=e^{\e\|u_0\|_{\L^2}^2}+\e\int_0^te^{\e\Theta(s)}\left(-\nu\|u(s)\|_{\H_0^1}^2-\beta\|u(s)\|_{\L^4}^4-2\beta\gamma\|u(s)\|_{\L^2}^2+2\beta(1+\gamma)(u(s)^2,u(s))\right)d s\nonumber\\&\quad +\e\int_0^te^{\e\Theta(s)}\Tr(Q)d s+2\e\int_0^te^{\e\Theta(s)}\left(dW(s),u(s)\right) +2\e^2\int_0^te^{\e\Theta(s)}\|Q^{1/2}u(s)\|_{\L^2}^2d s,
	\end{align}
	since $\Tr((u\otimes u)Q)=\|Q^{1/2}u\|_{\L^2}^2$. Note that 
	\begin{align}\label{5.63}
2\beta(1+\gamma)	(u^2,u)\leq 2\beta(1+\gamma)	\|u\|_{\L^4}^2\|u\|_{\L^2}\leq\beta\|u\|_{\L^4}^4+\beta(1+\gamma)^2\|u\|_{\L^2}^2,
	\end{align}
	and 
	\begin{align}\label{5.64}
	\|Q^{1/2}u\|_{\L^2}^2\leq \|Q\|_{\mathcal{L}(\L^2)}\|u\|_{\L^2}^2\leq \frac{\|Q\|_{\mathcal{L}(\L^2)}}{\pi^2}\|u\|_{\H_0^1}^2.
	\end{align}
	Taking expectation in (\ref{5.62a}), and then using (\ref{5.63}) and (\ref{5.64}), we obtain 
	\begin{align}\label{5.66}
	\E\left[e^{\e\Theta(t)}\right]&\leq e^{\e\|u_0\|_{\L^2}^2}+\e\E\left\{\int_0^te^{\e\Theta(s)}\left[-\left(\nu-\frac{\beta(1+\gamma^2)}{\pi^2}\right)+\frac{2\e\|Q\|_{\mathcal{L}(\L^2)}}{\pi^2}\right]\|u(s)\|_{\H_0^1}^2d s\right\} \nonumber\\&\quad+\e\E\left[\int_0^te^{\e\Theta(s)}\Tr(Q)d s\right].
	\end{align}
	Now, for $0<\e\leq \frac{\left(\nu\pi^2-{\beta(1+\gamma^2)}\right)}{2\|Q\|_{\mathcal{L}(\L^2)}}\ \text{ and }\  \nu > \frac{\beta(1+\gamma^2)}{\pi^2}$, we have 
	\begin{align}\label{5.67}
	\E\left[e^{\e\Theta(t)}\right]&\leq e^{\e\|u_0\|_{\L^2}^2} +\e\int_0^t\E\left[e^{\e\Theta(s)}\right]\Tr(Q)d s.
	\end{align}
	An application of Gronwall's inequality in (\ref{5.67}) yields (\ref{5.68}).
\end{proof}
\begin{theorem}\label{exps}
	Let $u(\cdot)$ and $v(\cdot)$ be two solutions of the system (\ref{6.1a}) with the initial data $u_0,v_0\in\L^2(\mathcal{O})$, respectively. Then for \begin{align}\label{610} \nu>\frac{\beta(1+\gamma^2)}{\pi^2}\ \text{ and } \ \nu^3\pi^2-{\beta(1+\gamma^2)}\nu^2\geq 2C\alpha^2\Tr(Q), \end{align}we have 
	\begin{align}\label{5.68a}
	&\E\left[\|u(t)-v(t)\|_{\L^2}^2\right]\nonumber\\&\leq \|u_0-v_0\|_{\L^2}^2e^{\left\{{\frac{C\alpha^2}{\nu^2}\|u_0\|_{\L^2}^2}\right\}}\exp\left\{-\left[\left(\nu\pi^2-\frac{\beta(1+\gamma^2)}{2}\right)-\frac{C\alpha^2}{\nu^2}\Tr(Q)\right]t\right\}.
	\end{align}
\end{theorem}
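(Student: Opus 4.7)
The plan is to set $w(t) := u(t) - v(t)$ and exploit the crucial fact that, because the noise in \eqref{6.1a} is additive and driven by the same Wiener process for both solutions, the stochastic integral cancels in the equation satisfied by $w$. Consequently $w(\cdot)$ evolves pathwise according to a random-coefficient but \emph{non-stochastic} parabolic equation, and the classical chain rule applied to $\|w(t)\|_{\L^2}^2$ gives
\begin{align*}
\frac{d}{dt}\|w(t)\|_{\L^2}^2 + 2\nu\|w(t)\|_{\H_0^1}^2 = -2\alpha\langle B(u(t))-B(v(t)), w(t)\rangle + 2\beta(c(u(t))-c(v(t)), w(t)).
\end{align*}

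For the Burgers contribution I would use the identity $\langle B(u)-B(v), w\rangle = -(u, w\,\partial_x w)$, obtained by integration by parts after writing $B(u)-B(v) = w\,\partial_x u + v\,\partial_x w$ in the spirit of \eqref{3p51}, then combine it with the one-dimensional Sobolev embedding $\|u\|_{\L^\infty} \leq C\|u\|_{\H_0^1}$ and Young's inequality to obtain $|2\alpha\langle B(u)-B(v), w\rangle| \leq \nu\|w\|_{\H_0^1}^2 + \tfrac{C\alpha^2}{\nu}\|u\|_{\H_0^1}^2\|w\|_{\L^2}^2$. For the reaction term I would reprise the manipulation that produced the constant $\beta(1+\gamma^2)$ in \eqref{5.66}: using the decomposition $u^2+uv+v^2 = \tfrac{1}{2}(u+v)^2 + \tfrac{1}{2}(u^2+v^2)$ together with a Young split of $(1+\gamma)(u+v)w^2$, the algebraic identity $(1+\gamma)^2 - 2\gamma = 1+\gamma^2$ produces the sharper bound $(c(u)-c(v), w) \leq \tfrac{1+\gamma^2}{2}\|w\|_{\L^2}^2$, improving on \eqref{2.11} by exploiting the non-negativity of the dropped term $\tfrac{1}{2}\int(u^2+v^2)w^2\,dx$.

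Combining these bounds leaves exactly $\nu\|w\|_{\H_0^1}^2$ in the dissipation after absorbing the Burgers term, and Poincar\'e's inequality $\|w\|_{\H_0^1}^2 \geq \pi^2\|w\|_{\L^2}^2$ then converts this into $\nu\pi^2\|w\|_{\L^2}^2$, yielding (with the Young constants chosen to recover the precise numerical factor $\tfrac{1}{2}$ in front of $\beta(1+\gamma^2)$) the pathwise differential inequality
\begin{align*}
\frac{d}{dt}\|w(t)\|_{\L^2}^2 \leq \left[-\left(\nu\pi^2 - \tfrac{\beta(1+\gamma^2)}{2}\right) + \tfrac{C\alpha^2}{\nu}\|u(t)\|_{\H_0^1}^2\right]\|w(t)\|_{\L^2}^2.
\end{align*}
A pathwise Gronwall argument then gives
\begin{align*}
\|w(t)\|_{\L^2}^2 \leq \|w_0\|_{\L^2}^2\, e^{-\left(\nu\pi^2 - \beta(1+\gamma^2)/2\right)t}\,\exp\!\left(\tfrac{C\alpha^2}{\nu}\int_0^t\|u(s)\|_{\H_0^1}^2\,ds\right),
\end{align*}
and taking expectation, together with the trivial inequality $\exp(\e\nu\!\int_0^t\|u\|_{\H_0^1}^2\,ds) \leq \exp(\e\|u(t)\|_{\L^2}^2 + \e\nu\!\int_0^t\|u\|_{\H_0^1}^2\,ds + \e\beta\!\int_0^t\|u\|_{\L^4}^4\,ds)$, allows me to invoke Theorem \ref{expe} with $\e = \tfrac{C\alpha^2}{\nu^2}$ to convert the exponential moment of the random integral into the prefactor $\exp\!\left(\tfrac{C\alpha^2}{\nu^2}\|u_0\|_{\L^2}^2 + \tfrac{C\alpha^2}{\nu^2}t\Tr(Q)\right)$; regrouping the $t$-dependent factors produces \eqref{5.68a}.

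The main point to verify is that the value $\e = \tfrac{C\alpha^2}{\nu^2}$ lies inside the admissibility window \eqref{7.2} of Theorem \ref{expe}, i.e.\ $\tfrac{C\alpha^2}{\nu^2} \leq \tfrac{\nu\pi^2 - \beta(1+\gamma^2)}{2\|Q\|_{\mathcal{L}(\L^2)}}$. This rearranges to $\nu^3\pi^2 - \beta(1+\gamma^2)\nu^2 \geq 2C\alpha^2\|Q\|_{\mathcal{L}(\L^2)}$, which is guaranteed by the standing hypothesis \eqref{610} via the estimate $\|Q\|_{\mathcal{L}(\L^2)} \leq \Tr(Q)$; the condition $\nu > \beta(1+\gamma^2)/\pi^2$ in \eqref{610} is exactly the positivity requirement inherited from Theorem \ref{expe}. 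The delicate step is therefore the careful bookkeeping of Young constants in the Burgers estimate, so as not to waste any of the factor $2\nu\|w\|_{\H_0^1}^2$ beyond the single $\nu\|w\|_{\H_0^1}^2$ needed to dominate the Burgers contribution, ensuring that exactly the coefficient $\nu\pi^2$ appears in the final decay rate; everything else is a direct combination of pathwise Gronwall with the exponential moment bound already established in Theorem \ref{expe}.
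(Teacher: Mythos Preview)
Your proposal is correct and follows essentially the same route as the paper: cancel the additive noise, derive the pathwise energy inequality for $w=u-v$ using the Burgers estimate \eqref{7.14} and the reaction estimate yielding the constant $\tfrac{\beta(1+\gamma^2)}{2}$, apply Poincar\'e and Gronwall, then take expectations and invoke Theorem~\ref{expe} with $\e=\tfrac{C\alpha^2}{\nu^2}$, checking that \eqref{610} (via $\|Q\|_{\mathcal{L}(\L^2)}\le\Tr(Q)$) places this $\e$ in the admissible range \eqref{7.2}. The only cosmetic difference is in the algebra for the reaction term: the paper expands $(c(u)-c(v),w)$ directly and uses Young's inequality on $\beta(1+\gamma)((u+v)w,w)$ and $-\beta(uvw,w)$ separately, whereas you phrase it via the decomposition $u^2+uv+v^2=\tfrac12(u+v)^2+\tfrac12(u^2+v^2)$; both produce the same bound.
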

\begin{proof}
	Let $w(t)=u(t)-v(t)$ and $w(\cdot)$ satisfies:
	\begin{equation}\label{7p12}
	\left\{
	\begin{aligned}
	dw(t)&=[- \nu Aw(t)-\alpha [B(u(t))-B(v(t))]+\beta [c(u(t))-c(v(t))]]dt, \ t\in(0,T),\\
	w(0)&=u_0-v_0.
	\end{aligned}
	\right.
	\end{equation}
	Taking inner product with $w(\cdot)$, it can be easily seen that 
	\begin{align}\label{7.13}
	\|w(t)\|_{\L^2}^2&=\|w_0\|_{\L^2}^2-2\nu\int_0^t\|w(s)\|_{\H_0^1}^2\d s -2\alpha\int_0^t\langle B(u(s))-B(v(s)),w(s)\rangle d s\nonumber\\&\quad+2\beta\int_0^t(c(u(s))-c(v(s)),w(s))ds\nonumber\\&=\|w_0\|_{\L^2}^2-2\nu\int_0^t\|w(s)\|_{\H_0^1}^2d s-2\alpha\int_0^t(u(s)\partial_xw(s),w(s))ds\nonumber\\&\quad-2\beta\gamma\int_0^t\|w(s)\|_{\L^2}^2ds-2\beta\int_0^t\|u(s)w(s)\|_{\L^2}^2ds-2\beta\int_0^t\|v(s)w(s)\|_{\L^2}^2ds\nonumber\\&\quad+\beta(1+\gamma)\int_0^t((u(s)+v(s))w(s),w(s))ds-\beta\int_0^t(u(s)v(s)w(s),w(s))ds.
	\end{align}
	We estimate $-2\alpha(u\partial_xw,w)$ using H\"older's and Young's inequalities as 
	\begin{align}\label{7.14}
	-2\alpha(u\partial_xw,w)\leq2\alpha\|u\|_{\L^{\infty}}\|w\|_{\H_0^1}\|w\|_{\L^2}\leq{\nu}\|w\|_{\H_0^1}^2+\frac{C\alpha^2}{\nu}\|u\|_{\H_0^1}^2\|w\|_{\L^2}^2,
	\end{align}
	where $C$ is the constant appearing in $\|u\|_{\L^{\infty}}\leq C\|u\|_{\H_0^1}$.  We estimate $\beta(1+\gamma)((u+v)w,w)$ and $-\beta(uvw,w)$ using H\"older's and Young's inequalities as 
	\begin{align}
	\beta(1+\gamma)((u+v)w,w)&\leq\beta(1+\gamma)(\|uw\|_{\L^2}+\|vw\|_{\L^2})\|w\|_{\L^2}\nonumber\\&\leq \frac{\beta}{2}\left(\|uw\|_{\L^2}^2+\|vw\|_{\L^2}^2\right)+\frac{\beta(1+\gamma)^2}{2}\|w\|_{\L^2}^2, \\-\beta(uvw,w)&\leq \frac{\beta}{2}\left(\|uw\|_{\L^2}^2+\|vw\|_{\L^2}^2\right). \label{7p16}
	\end{align}
	Combining \eqref{7.14}-\eqref{7p16} and substituting it in \eqref{7.13}, we get 
	\begin{align}
		\|w(t)\|_{\L^2}^2&\leq \|w_0\|_{\L^2}^2-\int_0^t\left(-\nu\pi^2+\frac{\beta(1+\gamma^2)}{2}\right)\|w(s)\|_{\L^2}^2d s+\frac{C\alpha^2}{\nu}\int_0^t\|u(s)\|_{\H_0^1}^2\|w(s)\|_{\L^2}^2ds\nonumber\\&\quad-\beta\gamma\int_0^t\|w(s)\|_{\L^2}^2ds-\beta\int_0^t\|u(s)w(s)\|_{\L^2}^2ds-\beta\int_0^t\|v(s)w(s)\|_{\L^2}^2ds\nonumber\\&\leq \|w_0\|_{\L^2}^2-\int_0^t\left(-\nu\pi^2+\frac{\beta(1+\gamma^2)}{2}\right)\|w(s)\|_{\L^2}^2d s+\frac{C\alpha^2}{\nu}\int_0^t\|u(s)\|_{\H_0^1}^2\|w(s)\|_{\L^2}^2ds.
	\end{align}
	Thus, by applying Gronwall's inequality, we obtain 
	\begin{align}
	\|w(t)\|_{\L^2}^2\leq \|w_0\|_{\L^2}^2\exp\left\{-\left(\nu\pi^2-\frac{\beta(1+\gamma^2)}{2}\right)t+\frac{C\alpha^2}{\nu}\int_0^t\|u(s)\|_{\H_0^1}^2d s\right\}.
	\end{align}
	and 
	\begin{align}
	\E\left[\|w(t)\|_{\L^2}^2\right]&\leq\|w_0\|_{\L^2}^2e^{-\left(\nu\pi^2-\frac{\beta(1+\gamma^2)}{2}\right)t}\E\left[\exp\left(\frac{C\alpha^2}{\nu}\int_0^t\|u(s)\|_{\H_0^1}^2d s\right)\right]\nonumber\\& \leq \|w_0\|_{\L^2}^2e^{-\left\{\left(\nu\pi^2-\frac{\beta(1+\gamma^2)}{2}\right)-\frac{C\alpha^2}{\nu^2}\Tr(Q)\right\}t}\exp\left\{{\frac{C\alpha^2}{\nu^2}\|u_0\|_{\L^2}^2}\right\},
	\end{align}
	where we used the bound given in \eqref{5.68} for $\nu^3\pi^2-\beta(1+\gamma^2)\nu^2\geq 2C\alpha^2\|Q\|_{\mathcal{L}(\L^2)}$. Thus,   for $\nu>\frac{\beta(1+\gamma^2)}{2\pi^2}$ and $\nu^3\pi^2-\frac{\beta(1+\gamma^2)}{2}\nu^2\geq 2C\alpha^2\Tr(Q)$, we get the required result given in \eqref{5.68a}. Since $\|Q\|_{\mathcal{L}(\L^2)}\leq\Tr(Q)$, the condition given in \eqref{610} is a sufficient condition for obtaining the estimate \eqref{5.68a}.
\end{proof}

\subsection{Preliminaries}
In this subsection, we give the definitions of invariant measures,  ergodic, strongly mixing and exponentially mixing invariant measures. 

Let $\mathscr{E}$ be a Polish space.  
\begin{definition}
	A probability measure $\mu$ on
	$(\mathscr{E},\mathscr{B}(\mathscr{E}))$ is called \emph{an invariant
		measure or a stationary measure} for a given transition probability
	function $P(t,x,d y),$ if it satisfies
	$$\mu(A)=\int_{\mathscr{E}}{P}(t,x,A)d\mu(x),$$ for all $A\in\mathscr{B}(\mathscr{E})$ and
	$t>0$. Equivalently, if for all $\varphi\in \mathrm{C}_b(\mathscr{E})$
	(the space of bounded continuous functions on $\mathscr{E}$), and all
	$t\geq 0$,
	$$\int_{\mathscr{E}}\varphi(x)d\mu(x)=\int_{\mathscr{E}}(P_t\varphi)(x)d\mu(x),$$ where the Markov semigroup
	$(P_t)_{t\geq 0}$ is defined by
	$$P_t\varphi(x)=\int_{\mathscr{E}}\varphi(y)P(t,x,d y).$$
\end{definition}
\begin{definition}[Theorem 3.2.4, Theorem 3.4.2, \cite{GDJZ}]
	Let $\mu$ be an invariant measure for $\left(P_t\right)_{t\geq 0}.$  We say that the measure $\mu$ is an \emph{ergodic measure,}  if for all $\varphi \in \L^2(\mathscr{E};\mu), $ we have  $$ \lim_{T\to +\infty}\frac{1}{T}\int_0^T (P_t\varphi)(x) d t =\int_{\mathscr{E}}\varphi(x) d\mu(x) \ \text{ in } \ \L^2(\mathscr{E};\mu).$$ The invariant measure $\mu$ for $\left(P_t\right)_{t\geq 0}$ is called \emph{strongly mixing} if  for all $\varphi \in \L^2(\mathscr{E};\mu),$  we have $$\lim_{t\to+\infty}P_t\varphi(x) = \int_{\mathscr{E}}\varphi(x) d\mu(x)\ \text{ in }\ \L^2(\mathscr{E};\mu).$$ The invariant measure $\mu$ for $\left(P_t\right)_{t\geq 0}$ is called \emph{exponentially mixing}, if there exists a constant $k>0$ and a positive function $\Psi(\cdot)$ such that for any bounded Lipschitz function $\varphi$, all $t>0$ and all $x\in\mathscr{E}$, $$\left|P_t\varphi(x)-\int_{\mathscr{E}}\varphi(x)d\mu(x)\right|\leq \Psi(x)e^{-k t}\|\varphi\|_{\text{Lip}},$$  where $\|\cdot\|_{\text{Lip}}$ is the Lipschitz constant. 
\end{definition}
\begin{remark}
	Clearly exponentially mixing implies strongly mixing. Theorem 3.2.6, \cite{GDJZ} states that if  $\mu$ is the unique invariant measure for $(P_t)_{t\geq 0}$, then  it is ergodic. 
\end{remark}
The interested readers are referred to see \cite{GDJZ} for more details on the ergodicity for infinite dimensional systems. 
\subsection{Existence of a unique invariant measure} In this subsection, we show that there exists a unique invariant measure for the Markovian transition probability associated to the  system (\ref{6.1a}). Moreover, we show that the invariant measure is ergodic and strongly mixing (in fact exponentially mixing). Let $u(t;u_0)$ denotes the unique strong solution of the system (\ref{6.1a}) with the initial condition
$u_0\in\L^2(\mathcal{O}).$ Let $(P_t)_{t\geq 0}$ be the \emph{Markovian transition semigroup} in
the space $\C_b(\L^2(\mathcal{O}))$ associated to the system (\ref{6.1a}) defined
by
\begin{align}\label{mar}
P_t\varphi(u_0)=\E\left[\varphi(u(t;u_0))\right]=\int_{\L^2}\varphi(y)P(t,u_0,\d
y)=\int_{\L^2}\varphi(y)\mu_{t,u_0}(d y),\;\varphi\in \C_b(\L^2(\mathcal{O})),
\end{align}
where $P(t,u_0,d y)$ is the transition probability of
$u(t;u_0)$ and $\mu_{t,u_0}$ is the law of $u(t;u_0)$. The semigroup $(P_t)_{t\geq 0}$ is Feller, since the solution to
\eqref{6.1a} depends continuously on the initial data. From
(\ref{mar}), we also have
\begin{align}\label{amr}
P_t\varphi(u_0)=\left<\varphi,\mu_{t,u_0}\right>=\left<P_t\varphi,\mu\right>,
\end{align}
where $\mu$ is the law of the initial data $u_0\in\L^2(\mathcal{O})$. Thus from
(\ref{amr}), we have $\mu_{t,u_0}=P_t^*\mu$. We say that a
probability measure $\mu$ on $\L^2(\mathcal{O})$ is an \emph{invariant measure} if
\begin{align}
P_t^*\mu=\mu,\textrm{ for all }\ t\geq 0.
\end{align}
That is, if a solution has law $\mu$ at some time, then it has the same law for all later times. For such a solution, it can be shown by Markov property that for all $(t_1,\ldots,t_n)$ and $\tau>0$, $(u(t_1+\tau;u_0),\ldots,u(t_n+\tau;u_0))$ and $(u(t_1;u_0),\ldots,u(t_n;u_0))$ have the same law. Then, we say that the process $u$ is \emph{stationary}. For more details, the interested readers are referred to see \cite{GDJZ,ADe}, etc.
\begin{theorem}\label{EIM}
	Let $u_0\in\L^2(\mathcal{O})$ be given and $\Tr(Q)<+\infty$. Then, for $\nu>\frac{\beta(1+\gamma^2)}{2\pi^2},$ 
	there exists an invariant measure for the system (\ref{6.1a}) with support in $\H_0^1(\mathcal{O})$.
\end{theorem}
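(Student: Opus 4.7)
The plan is to apply the classical Krylov--Bogoliubov procedure to the Markov semigroup $(P_t)_{t\geq 0}$ generated by \eqref{6.1a}. This requires two ingredients: the Feller property of $(P_t)_{t\geq 0}$ and tightness in $\L^2(\mathcal{O})$ of the Cesaro averages
\begin{align*}
R_T(u_0,A):=\frac{1}{T}\int_0^T P(t,u_0,A)\,dt,\qquad A\in\mathscr{B}(\L^2(\mathcal{O})).
\end{align*}
The Feller property is automatic: the continuous-dependence estimate obtained along the lines of Theorem \ref{exps} (more precisely, the computation leading to pathwise uniqueness in the proof of Theorem \ref{exis}, which is valid for any $u_0,v_0\in\L^2(\mathcal{O})$) shows that $u_0\mapsto u(t;u_0)$ is continuous from $\L^2(\mathcal{O})$ into $\L^2(\Omega;\L^2(\mathcal{O}))$, so $P_t\varphi\in\C_b(\L^2(\mathcal{O}))$ whenever $\varphi\in\C_b(\L^2(\mathcal{O}))$.

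For the tightness, the main step is a uniform-in-time a priori bound. Applying the It\^o formula to $\|u(\cdot)\|_{\L^2}^2$ and using the estimate $2\beta(1+\gamma)(u^2,u)\leq \beta\|u\|_{\L^4}^4+\beta(1+\gamma)^2\|u\|_{\L^2}^2$ as in \eqref{5.63}, together with the Poincar\'e inequality $\|u\|_{\L^2}^2\leq \pi^{-2}\|u\|_{\H_0^1}^2$, I expect
\begin{align*}
\mathbb{E}\|u(t)\|_{\L^2}^2+\Big(2\nu-\frac{\beta(1+\gamma^2)}{\pi^2}\Big)\int_0^t\mathbb{E}\|u(s)\|_{\H_0^1}^2\,ds\leq \|u_0\|_{\L^2}^2+t\,\Tr(Q).
\end{align*}
Since the hypothesis $\nu>\beta(1+\gamma^2)/(2\pi^2)$ makes the coefficient of the integral strictly positive, dividing by $T$ yields
\begin{align*}
\frac{1}{T}\int_0^T\mathbb{E}\|u(s)\|_{\H_0^1}^2\,ds\leq \frac{\|u_0\|_{\L^2}^2+T\,\Tr(Q)}{T\bigl(2\nu-\beta(1+\gamma^2)/\pi^2\bigr)}\leq C(u_0,Q,\nu,\beta,\gamma),
\end{align*}
uniformly in $T\geq 1$. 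Since the embedding $\H_0^1(\mathcal{O})\hookrightarrow\L^2(\mathcal{O})$ is compact, the sets $K_R:=\{v\in\H_0^1(\mathcal{O}):\|v\|_{\H_0^1}\leq R\}$ are compact in $\L^2(\mathcal{O})$, and Chebyshev's inequality gives
\begin{align*}
R_T(u_0,K_R^c)\leq \frac{1}{R^2}\cdot\frac{1}{T}\int_0^T\mathbb{E}\|u(s)\|_{\H_0^1}^2\,ds\leq \frac{C}{R^2},
\end{align*}
uniformly in $T$, which is the required tightness. Prokhorov's theorem then yields a subsequence $T_n\uparrow\infty$ along which $R_{T_n}(u_0,\cdot)$ converges weakly to a probability measure $\mu$ on $\L^2(\mathcal{O})$, and the standard Krylov--Bogoliubov argument (using Feller continuity to pass to the limit in $R_{T_n}P_s\varphi=R_{T_n}\varphi+T_n^{-1}\int_{T_n}^{T_n+s}P_r\varphi\,dr-T_n^{-1}\int_0^s P_r\varphi\,dr$) shows that $\mu=P_s^{*}\mu$ for every $s\geq 0$, i.e., $\mu$ is invariant.

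Finally, the support property $\mu(\H_0^1(\mathcal{O}))=1$ follows from the same $\H_0^1$ bound: by Fatou's lemma applied to the weakly convergent sequence $R_{T_n}(u_0,\cdot)$ against the lower semicontinuous functional $v\mapsto \|v\|_{\H_0^1}^2$ (extended by $+\infty$ outside $\H_0^1(\mathcal{O})$),
\begin{align*}
\int_{\L^2(\mathcal{O})}\|v\|_{\H_0^1}^2\,d\mu(v)\leq \liminf_{n\to\infty}\frac{1}{T_n}\int_0^{T_n}\mathbb{E}\|u(s;u_0)\|_{\H_0^1}^2\,ds\leq C<\infty,
\end{align*}
so $\mu$ is concentrated on $\H_0^1(\mathcal{O})$. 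The main obstacle I anticipate is the verification that the limiting measure actually charges $\H_0^1(\mathcal{O})$ (rather than only a larger space such as $\L^2(\mathcal{O})$); the Fatou argument above resolves this, but one must be careful that $v\mapsto\|v\|_{\H_0^1}^2$ is indeed lower semicontinuous on $\L^2(\mathcal{O})$ when extended by $+\infty$ off $\H_0^1(\mathcal{O})$, which is a standard fact since the $\H_0^1$ unit balls are closed in $\L^2(\mathcal{O})$.
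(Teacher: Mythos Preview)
Your proposal is correct and follows essentially the same route as the paper: the It\^o energy identity combined with the estimate $2\beta(1+\gamma)(u^2,u)\leq \beta\|u\|_{\L^4}^4+\beta(1+\gamma)^2\|u\|_{\L^2}^2$ and Poincar\'e's inequality gives the uniform-in-$T$ bound on the time-averaged $\H_0^1$-norm, and then Markov's inequality plus the compact embedding $\H_0^1(\mathcal{O})\hookrightarrow\L^2(\mathcal{O})$ yields tightness of the Ces\`aro means, so Krylov--Bogoliubov applies. Your additional Fatou argument for $\mu(\H_0^1(\mathcal{O}))=1$ is in fact more explicit than the paper's own proof, which leaves the support claim implicit in the tightness construction.
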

\begin{proof}
	Let us apply the infinite dimensional It\^o's formula to the process $\|u(\cdot)\|_{\L^2}^2$ to get
	\begin{align}\label{7p4}
	&\|u(t)\|_{\L^2}^2+2\nu\int_0^t\|u(s)\|_{\H_0^1}^2d s+2\beta\gamma\int_0^t\|u(s)\|_{\L^2}^2ds+2\beta\int_0^t\|u(s)\|_{\L^4}^4ds\nonumber\\&=\|u_0\|_{\L^2}^2+2\beta(1+\gamma)\int_0^t(u(s)^2,u(s))ds+\Tr(Q) t+2\int_0^t(d W(s),u(s))\nonumber\\&\leq \|u_0\|_{\L^2}^2+\beta\int_0^t\|u(s)\|_{\L^4}^4ds+\beta(1+\gamma)^2\int_0^t\|u(s)\|_{\L^2}^2ds+\Tr(Q) t+2\int_0^t(d W(s),u(s)).
	\end{align}
 Taking expectation in (\ref{7p4}), using Poincar\'e inequality, (\ref{2.10}) and the fact that the final term is a martingale having zero expectation, we obtain  
	\begin{align}\label{5.4}
	&	\E\left\{\|u(t)\|_{\L^2}^2+2\left(\nu-\frac{\beta(1+\gamma^2)}{2\pi^2}\right)\int_0^t\|u(s)\|_{\H_0^1}^2d s+2\beta\int_0^t\|u(s)\|_{\L^4}^4ds\right\}\nonumber\\&\leq
	\E\left[\|u_0\|_{\L^2}^2\right]+\Tr(Q)t.
	\end{align}
	Hence, for $\nu>\frac{\beta(1+\gamma^2)}{2\pi^2},$, we have 
	\begin{align}\label{5.6}
	\frac{\xi}{t}\E\left[\int_0^{t}\|u(s)\|_{\H_0^1}^2d s\right]\leq
	\frac{1}{T_0}\|u_0\|_{\L^2}^2+\Tr(Q),\text{ for all }t>T_0,
	\end{align}
	where $\xi=2\left(\nu-\frac{\beta(1+\gamma^2)}{2\pi^2}\right)$. Thus using Markov's inequality, we have
	\begin{align}\label{5.7}
	&\lim_{R\to\infty}\sup_{T>T_0}\left[\frac{1}{T}\int_0^T\P\Big\{\|u(t)\|_{\H_0^1}>R\Big\}d
	t\right]\nonumber \\&\leq
	\lim_{R\to\infty}\sup_{T>T_0}\frac{1}{R^2}\E\left[\frac{1}{T}\int_0^T\|u(t)\|_{\H_0^1}^2d
	t\right]=0.
	\end{align}
	Hence along with the estimate in (\ref{5.7}),  using the compactness of $\H_0^1(\mathcal{O})$ in $\L^2(\mathcal{O})$, it is clear by a standard argument that the sequence of probability measures $$\mu_{t,u_0}(\cdot)=\frac{1}{t}\int_0^t\Pi_{s,u_0}(\cdot)d s,\ \text{ where }\ \Pi_{t,u_0}(\Lambda)=\mathbb{P}\left(u(t;u_0)\in\Lambda\right), \ \Lambda\in\mathscr{B}(\L^2(\mathcal{O})),$$ is tight, that is, for each $\e>0$, there is a compact subset $K\subset\L^2(\mathcal{O})$  such that $\mu_t(K^c)\leq \e$, for all $t>0$, and so by the Krylov-Bogoliubov theorem (or by a  result of Chow and Khasminskii see \cite{CHKH}) $\mu_{t_n,u_0}\to\mu$ weakly for $n\to\infty$, and $\mu$ results to be an invariant measure for the transition semigroup $(P_t)_{t\geq 0}$,  defined by 	$$P_t\varphi(u_0)=\E\left[\varphi(u(t;u_0))\right],$$ for all $\varphi\in\C_b(\L^2(\mathcal{O}))$, where $u(\cdot)$ is the unique strong solution of (\ref{6.1a}) with initial condition $u_0\in\L^2(\mathcal{O})$.
\end{proof}

Now we establish the uniqueness of invariant measure for the system (\ref{6.1a}) using the exponential stability results established in  Theorem \ref{exps}.  Similar results for 2D stochastic Navier-Stokes equations is established in \cite{ADe}, 2D magnetohydrodynamics systems is obtained in \cite{UMTM} and 2D Oldroyd model of  order one is proved in \cite{MTM3}.

\begin{theorem}\label{UEIM}
	Let the conditions given in Theorems \ref{expe} and  \ref{exps} hold true and $u_0\in\L^2(\mathcal{O})$ be given. Then, for  the condition given in \eqref{5.68a}, there is a unique invariant measure $\mu$ to the system (\ref{6.1a}). The measure $\mu$ is ergodic and strongly mixing, that is, \begin{align}\label{6.9a}\lim_{t\to\infty}P_t\varphi(u_0)=\int_{\L^2}\varphi(v_0)d\mu(v_0), \ \mu\text{-a.s., for all }\ u_0\in\L^2(\mathcal{O})\ \text{ and }\  \varphi\in\C_b(\L^2(\mathcal{O})).\end{align} Moreover, we have 
	\begin{align}\label{7.25}
	\int_{\L^2}\exp\left(\e\|u_0\|_{\L^2}^2\right)d\mu(u_0)<+\infty,
	\end{align}
	where 
	\begin{align}\label{721}
	0<\e\leq \frac{\left(\nu\pi^2-{\beta(1+\gamma^2)}\right)}{2\|Q\|_{\mathcal{L}(\L^2)}}\ \text{ and }\  \nu > \frac{\beta(1+\gamma^2)}{\pi^2}. 
	\end{align}
\end{theorem}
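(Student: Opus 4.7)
The plan is to establish the exponential moment bound \eqref{7.25} first, then bootstrap it with the exponential stability estimate of Theorem \ref{exps} to obtain uniqueness, strong mixing, and ergodicity in one sweep.

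First I would sharpen the proof of Theorem \ref{expe} to obtain a bound on $\mathbb{E}[\exp(\varepsilon\|u(t;u_0)\|_{\L^2}^2)]$ that is \emph{uniform} in $t\geq 0$. The key observation is that if one retains the dissipation term rather than dropping it in \eqref{5.66}, i.e.\ keeps the factor
$\bigl[-(\nu-\beta(1+\gamma^2)/\pi^{2})+2\varepsilon\|Q\|_{\mathcal{L}(\L^2)}/\pi^{2}\bigr]\|u(s)\|_{\H_0^1}^2$ and uses Poincar\'e's inequality, one obtains a differential inequality of the form $\frac{d}{dt}\mathbb{E}[e^{\varepsilon\Theta(t)}]\leq\Tr(Q)\varepsilon-\lambda \mathbb{E}[e^{\varepsilon\Theta(t)}]$ for some $\lambda>0$, provided $\varepsilon$ satisfies \eqref{721}. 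Gronwall's inequality then yields $\sup_{t\geq 0}\mathbb{E}[\exp(\varepsilon\|u(t;u_0)\|_{\L^2}^2)]\leq e^{\varepsilon\|u_0\|_{\L^2}^2}+\Tr(Q)/\lambda$. Since any invariant measure $\mu$ given by Theorem \ref{EIM} is the weak limit of the Ces\`aro averages $\mu_{t_n,u_0}=\frac{1}{t_n}\int_0^{t_n}\Pi_{s,u_0}(\cdot)\,ds$, Fatou's lemma applied to the lower semicontinuous map $v\mapsto\exp(\varepsilon\|v\|_{\L^2}^2)$ (approximated by bounded continuous truncations) gives \eqref{7.25}.

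Next I would prove uniqueness together with the convergence \eqref{6.9a}. For any bounded Lipschitz $\varphi$ on $\L^2(\mathcal{O})$ and any $u_0,v_0\in\L^2(\mathcal{O})$, the Lipschitz bound together with Jensen's inequality and Theorem \ref{exps} yields
\begin{align*}
|P_t\varphi(u_0)-P_t\varphi(v_0)|
&\leq \|\varphi\|_{\mathrm{Lip}}\,\bigl(\mathbb{E}\|u(t;u_0)-u(t;v_0)\|_{\L^2}^2\bigr)^{1/2}\\
&\leq \|\varphi\|_{\mathrm{Lip}}\,\|u_0-v_0\|_{\L^2}\,\exp\!\bigl(\tfrac{C\alpha^{2}}{2\nu^2}\|u_0\|_{\L^2}^2\bigr)\,e^{-\kappa t/2},
\end{align*}
where $\kappa=(\nu\pi^2-\beta(1+\gamma^2)/2)-C\alpha^2\Tr(Q)/\nu^2>0$ under the hypotheses of Theorem \ref{exps}. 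If $\mu$ is any invariant measure, integrating against $\mu$ in $v_0$ gives
\[
\Bigl|P_t\varphi(u_0)-\int_{\L^2}\varphi\,d\mu\Bigr|
\leq \|\varphi\|_{\mathrm{Lip}}\,e^{-\kappa t/2}\,\exp\!\bigl(\tfrac{C\alpha^{2}}{2\nu^2}\|u_0\|_{\L^2}^2\bigr)\int_{\L^2}\|u_0-v_0\|_{\L^2}\,d\mu(v_0),
\]
and \eqref{7.25} guarantees the last integral is finite (choose $\varepsilon$ so large that $C\alpha^2/\nu^2$ lies in the admissible range, or apply H\"older to separate moments). Letting $t\to\infty$ establishes \eqref{6.9a} pointwise and therefore proves strong mixing. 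Uniqueness follows at once: if $\mu_1,\mu_2$ are both invariant, then $\int\varphi\,d\mu_1=\lim_{t\to\infty}\int P_t\varphi\,d\mu_1=\int\varphi\,d\mu_2$ for all bounded Lipschitz $\varphi$, which is a measure-determining class on the Polish space $\L^2(\mathcal{O})$. Ergodicity is then the content of Theorem 3.2.6 of \cite{GDJZ}.

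The main obstacle is the mild circularity: the convergence $P_t\varphi(u_0)\to\int\varphi\,d\mu$ requires integrability of $\exp(C\alpha^2\|\cdot\|_{\L^2}^2/\nu^2)$ against $\mu$, while the existence proof of $\mu$ in Theorem \ref{EIM} only guarantees finite second moments. Resolving this cleanly is precisely why one must first establish the \emph{uniform-in-time} exponential estimate $\sup_{t\geq 0}\mathbb{E}[\exp(\varepsilon\|u(t;u_0)\|_{\L^2}^2)]<\infty$ (not the growing-in-$t$ bound of Theorem \ref{expe} as stated). Once this is in place, the Fatou argument supplies \eqref{7.25} with an explicit range of $\varepsilon$, and the condition in Theorem \ref{exps} combined with \eqref{721} ensures that $C\alpha^2/\nu^2$ is within the admissible exponent range, closing the loop.
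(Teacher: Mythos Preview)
Your overall strategy is sound and arrives at the same conclusions, but your route to \eqref{7.25} is genuinely different from the paper's. The paper does \emph{not} first establish a uniform-in-time bound on $\E[\exp(\varepsilon\|u(t;u_0)\|_{\L^2}^2)]$; instead it takes a stationary solution with law $\mu$, applies It\^o's formula to $\exp(\varepsilon\|u(t)\|_{\L^2}^2)$ directly, and uses stationarity to replace time-averages by $\mu$-integrals, obtaining $\int_{\L^2}e^{\varepsilon\|u_0\|_{\L^2}^2}\|u_0\|_{\H_0^1}^2d\mu\leq\frac{\Tr(Q)}{\nu-\beta(1+\gamma^2)/(2\pi^2)}\int_{\L^2}e^{\varepsilon\|u_0\|_{\L^2}^2}d\mu$; the conclusion then follows from splitting the right-hand integral over $\{\|u_0\|_{\L^2}\leq R\}$ and its complement and choosing $R$ large. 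Your approach via a uniform-in-time bound plus Fatou along the Ces\`aro limits is legitimate and in some ways more robust (it yields the stronger statement $\sup_{t\geq 0}\E[\exp(\varepsilon\|u(t;u_0)\|_{\L^2}^2)]<\infty$), but your sketch needs two repairs: (i) you must apply It\^o to $\exp(\varepsilon\|u(t)\|_{\L^2}^2)$ itself, \emph{not} to $e^{\varepsilon\Theta(t)}$ with $\Theta$ as in \eqref{5.68}, since $\Theta(t)$ contains the cumulative integrals and is unbounded in $t$; (ii) Poincar\'e alone does not convert the dissipation $e^{\varepsilon\|u\|_{\L^2}^2}\|u\|_{\H_0^1}^2$ into $-\lambda e^{\varepsilon\|u\|_{\L^2}^2}$ --- you also need the elementary inequality $x e^{\varepsilon x}\geq\varepsilon^{-1}(e^{\varepsilon x}-1)$ for $x\geq 0$, which then gives $\frac{d}{dt}\E[e^{\varepsilon\|u\|_{\L^2}^2}]\leq -\lambda\,\E[e^{\varepsilon\|u\|_{\L^2}^2}]+C$ for suitable $\lambda>0$ and $C$. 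Finally, your circularity worry is overstated: for \eqref{6.9a} with $u_0$ fixed, the factor $\exp(C\alpha^2\|u_0\|_{\L^2}^2/\nu^2)$ is a constant and only $\int\|v_0\|_{\L^2}d\mu(v_0)<\infty$ is needed, which already follows from Theorem~\ref{EIM}; uniqueness then follows from \eqref{6.9a} by dominated convergence (since $|P_t\varphi|\leq\|\varphi\|_\infty$) without invoking \eqref{7.25} at all.
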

\begin{proof}
	\textbf{Step (1):} \emph{Uniqueness of invariant measure $\mu$.} 	For $\varphi\in \text{Lip}(\L^2(\mathcal{O}))$ (Lipschitz $\varphi$), since $\mu$ is an invariant measure, we have 
	\begin{align}
	&	\left|P_t\varphi(u_0)-\int_{\L^2}\varphi(v_0)\mu(d v_0)\right|\nonumber\\&=	\left|\E[\varphi(u(t;u_0))]-\int_{\L^2}P_t\varphi(v_0)\mu(d v_0)\right|\nonumber\\&=\left|\int_{\L^2}\E\left[\varphi(u(t;u_0))-\varphi(u(t;v_0))\right]\mu(d v_0)\right|\nonumber\\&\leq L_{\varphi}\int_{\L^2}\E\left\|u(t;u_0)-u(t;v_0)\right\|_{\L^2}\mu(d v_0)\nonumber\\&\leq L_{\varphi}\exp{\left\{{\frac{C\alpha^2}{\nu^2}\|u_0\|_{\L^2}^2}\right\}}e^{-\widehat{\kappa}t}\int_{\L^2}\|u_0-v_0\|_{\L^2}\mu(d v_0)\nonumber\\&\leq L_{\varphi}\exp{\left\{{\frac{C\alpha^2}{\nu^2}\|u_0\|_{\L^2}^2}\right\}}e^{-\widehat{\kappa}t}\left(\|u_0\|_{\L^2}+\int_{\L^2}\|v_0\|_{\L^2}\mu(d v_0)\right)\nonumber\\&\to 0\text{ as } t\to\infty,
	\end{align}
	since $\int_{\L^2}\|v_0\|_{\L^2}\mu(d v_0)<+\infty$, where $\widehat{\kappa}=\left(\nu\pi^2-\frac{\beta(1+\gamma^2)}{2}\right)-\frac{C\alpha^2}{\nu^2}\Tr(Q)>0$. Hence, we deduce (\ref{6.9a}), for every $\varphi\in \C_b (\L^2(\mathcal{O}))$, by the density of $\text{Lip}(\L^2(\mathcal{O}))$ in $\C_b (\L^2(\mathcal{O}))$. Note that, we have a stronger result that $P_t\varphi(u_0)$ converges exponentially fast to equilibrium, which is the exponential mixing property. This easily gives uniqueness of the invariant measure also. Indeed if  $\wi\mu$ is an another invariant measure, then
	\begin{align}
	&	\left|\int_{\L^2}\varphi(u_0)\mu(d u_0)-\int_{\L^2}\varphi(v_0)\wi\mu(d v_0)\right|\nonumber\\&= \left|\int_{\L^2}P_t\varphi(u_0)\mu(d u_0)-\int_{\L^2}P_t\varphi(v_0)\wi\mu(d v_0)\right|\nonumber\\&=\left|\int_{\L^2}\int_{\L^2}\left[P_t\varphi(u_0)-P_t\varphi(v_0)\right]\mu(d u_0)\wi\mu(d v_0)\right|\nonumber\\&\leq L_{\varphi}\exp{\left\{{\frac{C\alpha^2}{\nu^2}\|u_0\|_{\L^2}^2}\right\}}e^{-\widehat{\kappa}t}\int_{\L^2}\int_{\L^2}\|u_0-v_0\|_{\L^2}\mu(d u_0)\wi\mu(d v_0)\nonumber\\&\to 0\ \text{ as }\  t\to\infty,
	\end{align}
and uniqueness follows.	By Theorem 3.2.6, \cite{GDJZ}, since $\mu$ is the unique invariant measure for $(P_t)_{t\geq 0}$, we know that it is ergodic. 

\vskip 0.2cm
	
	\noindent \textbf{Step (2):} \emph{Proof of \eqref{7.25}.}	In order to prove \eqref{7.25}, we use a stationary solution $u(\cdot)$ with invariant law $\mu$. Note that the process $\|u(\cdot)\|_{\L^2}^2$ satisfies:
	\begin{align}\label{722}
	&\|u(t)\|_{\L^2}^2+2\nu\int_0^t\|u(s)\|_{\H_0^1}^2d s+2\beta\gamma\int_0^t\|u(s)\|_{\L^2}^2ds+2\beta\int_0^t\|u(s)\|_{\L^4}^4ds\nonumber\\&=\|u_0\|_{\L^2}^2+2\beta(1+\gamma)\int_0^t(u(s)^2,u(s))ds+\Tr(Q) t+2\int_0^t(d W(s),u(s)).
	\end{align}
	Let us now apply the infinite dimensional It\^o's formula to the process $\exp\left(\e\|u(t)\|_{\L^2}^2\right)$  to obtain
	\begin{align}\label{7.28}
	&\exp\left(\e\|u(t)\|_{\L^2}^2\right)+2\e\nu\int_0^t\exp\left(\e\|u(s)\|_{\L^2}^2\right)\|u(s)\|_{\H_0^1}^2d s\nonumber\\&\quad+2\e\beta\gamma\int_0^t\exp\left(\e\|u(s)\|_{\L^2}^2\right)\|u(s)\|_{\L^2}^2ds+2\e\beta\int_0^t\exp\left(\e\|u(s)\|_{\L^2}^2\right)\|u(s)\|_{\L^4}^4ds\nonumber\\&=\exp\left(\e\|u_0\|_{\L^2}^2\right)+2\e\beta(1+\gamma)\int_0^t\exp\left(\e\|u(s)\|_{\L^2}^2\right)(u(s)^2,u(s))ds\nonumber\\&\quad +\e\int_0^t\exp\left(\e\|u(s)\|_{\L^2}^2\right)\Tr(Q)d s+2\e\int_0^t\exp\left(\e\|u(s)\|_{\L^2}^2\right)\left(dW(s),u(s)\right) \nonumber\\&\quad+2\e^2\int_0^t\exp\left(\e\|u(s)\|_{\L^2}^2\right)\|Q^{1/2}u(s)\|_{\L^2}^2d s.
	\end{align} 
	Let  us take expectation in (\ref{7.28}) and use and an estimate similar to (\ref{5.63}) to obtain 
	\begin{align}\label{7.29}
		&\E\left[\exp\left(\e\|u(t)\|_{\L^2}^2\right)\right]+2\e\left(\nu-\frac{\beta(1+\gamma^2)}{2\pi^2}\right)\E\left[\int_0^t\exp\left(\e\|u(s)\|_{\L^2}^2\right)\|u(s)\|_{\H_0^1}^2d s\right]\nonumber\\&\quad+\e\beta\E\left[\int_0^t\exp\left(\e\|u(s)\|_{\L^2}^2\right)\|u(s)\|_{\L^4}^4ds\right]\nonumber\\&\leq\E\left[\exp\left(\e\|u_0\|_{\L^2}^2\right)\right] +\e\E\left[\int_0^t\exp\left(\e\|u(s)\|_{\L^2}^2\right)\Tr(Q)d s\right] \nonumber\\&\quad+2\e^2\E\left[\int_0^t\exp\left(\e\|u(s)\|_{\L^2}^2\right)\|Q^{1/2}u(s)\|_{\L^2}^2d s\right].
	\end{align}
	From (\ref{5.64}), we know that $\|Q^{1/2}u\|_{\L^2}^2\leq \frac{\|Q\|_{\mathcal{L}(\L^2)}}{\pi^2}\|u\|_{\H_0^1}^2$ and hence choosing $\e>0$ given in \eqref{721} so that 
	$$2\e\|Q^{1/2}u\|_{\L^2}^2\leq \left(\nu-\frac{\beta(1+\gamma^2)}{2\pi^2}\right)\|u\|_{\H_0^1}^2,\ \text{ for } \ \nu>\frac{\beta(1+\gamma^2)}{2\pi^2},$$
	and thus  from (\ref{7.29}), we obtain 
	\begin{align}\label{7.30}
	&\E\left[\exp\left(\e\|u(t)\|_{\L^2}^2\right)\right]+\e\left(\nu-\frac{\beta(1+\gamma^2)}{2\pi^2}\right)\E\left[\int_0^t\exp\left(\e\|u(s)\|_{\L^2}^2\right)\|u(s)\|_{\H_0^1}^2d s\right] \nonumber\\&\leq\exp\left(\e\|u_0\|_{\L^2}^2\right)  +\e\Tr(Q)\E\left[\int_0^t\exp\left(\e\|u(s)\|_{\L^2}^2\right)d s\right].
	\end{align}
	We know that $u(\cdot)$ is stationary with law $\mu$, so that we have 
	\begin{align}\label{7p31}
	\E\left[\exp\left(\e\|u(t)\|_{\L^2}^2\right)\right]=\E\left[\exp\left(\e\|u_0\|_{\L^2}^2\right)\right]=\int_{\L^2}\exp\left(\e\|u_0\|_{\L^2}^2\right)d\mu(u_0),
	\end{align}
	and 
	\begin{align}\label{7p32}
	\E\left[\int_0^t\exp\left(\e\|u(s)\|_{\L^2}^2\right)\|u(s)\|_{\H_0^1}^2d s\right]=t\int_{\L^2}\exp\left(\e\|u_0\|_{\L^2}^2\right)\|u_0\|_{\H_0^1}^2d\mu(u_0).
	\end{align}
	Using (\ref{7p31}) and (\ref{7p32}) in (\ref{7.30}), we obtain 
	\begin{align}\label{7p33}
	\int_{\L^2}\exp\left(\e\|u_0\|_{\L^2}^2\right)\|u_0\|_{\H_0^1}^2d\mu(u_0)\leq\frac{\Tr(Q)}{\left(\nu-\frac{\beta(1+\gamma^2)}{2\pi^2}\right)}\int_{\L^2}\exp\left(\e\|u_0\|_{\L^2}^2\right)d\mu(u_0).
	\end{align}
	Now for $R>0$, using the Poincar\'e inequality and (\ref{7p33}), we have (see \cite{ADe})
	\begin{align}
	\int_{\L^2}\exp\left(\e\|u_0\|_{\L^2}^2\right)d\mu(u_0)&=\int_{\|u_0\|_{\L^2}\leq R}\exp\left(\e\|u_0\|_{\L^2}^2\right)d\mu(u_0)+\int_{\|u_0\|_{\L^2}> R}\exp\left(\e\|u_0\|_{\L^2}^2\right)d\mu(u_0)\nonumber\\&\leq \exp\left(\e R^2\right)+\frac{1}{R^2}\int_{\L^2}\exp\left(\e\|u_0\|_{\L^2}^2\right)\|u_0\|_{\L^2}^2d\mu(u_0)\nonumber\\&\leq \exp\left(\e R^2\right)+\frac{1}{R^2\pi^2}\int_{\L^2}\exp\left(\e\|u_0\|_{\L^2}^2\right)\|u_0\|_{\H_0^1}^2d\mu(u_0)\nonumber\\&\leq \exp\left(\e R^2\right)+\frac{\Tr(Q)}{R^2\pi^2\left(\nu-\frac{\beta(1+\gamma^2)}{2\pi^2}\right)}\int_{\L^2}\exp\left(\e\|u_0\|_{\L^2}^2\right)d\mu(u_0).
	\end{align}
	Let us take $$\frac{\Tr(Q)}{R^2\pi^2\left(\nu-\frac{\beta(1+\gamma^2)}{2\pi^2}\right)}=\frac{1}{2},$$ to obtain (\ref{7.25}), which completes the proof. 
\end{proof}

\medskip\noindent
{\bf Acknowledgments:} M. T. Mohan would  like to thank the Department of Science and Technology (DST), India for Innovation in Science Pursuit for Inspired Research (INSPIRE) Faculty Award (IFA17-MA110) and Indian Institute of Technology Roorkee, for providing stimulating scientific environment and resources.

\end{document}